\numberwithin{equation}{section}
\newtheorem{theoremcounter}{theoremcounter}[section]
\newtheorem{thmstarcounter}{thmstarcounter}
\newtheorem{corollary}[theoremcounter]{Corollary}
\newtheorem{lemma}[theoremcounter]{Lemma}
\newtheorem{proposition}[theoremcounter]{Proposition}
\newtheorem{theorem}[theoremcounter]{Theorem}
\newtheorem{thmstar}[thmstarcounter]{Theorem}
\theoremstyle{definition}
\newtheorem{definition}[theoremcounter]{Definition}
\newtheorem{example}[theoremcounter]{Example}
\newtheorem{remark}[theoremcounter]{Remark}
\newcommand{\cC}{\ensuremath{\mathcal{C}}}
\newcommand{\cP}{\ensuremath{\mathcal{P}}}
\newcommand{\cR}{\ensuremath{\mathcal{R}}}
\newcommand{\cS}{\ensuremath{\mathcal{S}}}
\newcommand{\cV}{\ensuremath{\mathcal{V}}}
\newcommand{\rC}{\ensuremath{\mathrm{C}}}
\newcommand{\rM}{\ensuremath{\mathrm{M}}}
\newcommand{\amid}{\ensuremath{\, | \,}}
\newcommand{\eqstop}{\ensuremath{\, \text{.}}}
\newcommand{\eqcomma}{\ensuremath{\, \text{,}}}
\newcommand{\NN}{\ensuremath{\mathbb{N}}}
\newcommand{\ZZ}{\ensuremath{\mathbb{Z}}}
\newcommand{\CC}{\ensuremath{\mathbb{C}}}
\newcommand{\Hom}{\ensuremath{\mathop{\mathrm{Hom}}}}
\newcommand{\id}{\ensuremath{\mathrm{id}}}
\newcommand{\ra}{\ensuremath{\rightarrow}}
\newcommand{\lra}{\ensuremath{\longrightarrow}}
\newcommand{\hra}{\ensuremath{\hookrightarrow}}
\newcommand{\thra}{\ensuremath{\twoheadrightarrow}}
\newcommand{\Cmat}[1]{\ensuremath{\mathrm{M}_{#1}(\CC)}}
\newcommand{\GL}[2]{\ensuremath{\mathrm{GL}_{#1}(#2)}}
\newcommand{\End}{\ensuremath{\mathrm{End}}}
\newcommand{\ot}{\ensuremath{\otimes}}
\newcommand{\Cstar}{\ensuremath{\text{C}^*}}
\newcommand{\bo}{\ensuremath{\mathscr{B}}}
\newcommand{\FdHilb}{\ensuremath{\mathrm{FdHilb}}}
\newcommand{\Cstarmax}{\ensuremath{\Cstar_\mathrm{max}}}
\newcommand{\lspan}{\ensuremath{\mathop{\mathrm{span}}}}
\newcommand{\freegrp}[1]{\ensuremath{\mathbb{F}}_{#1}}
\newcommand{\Corepfin}{\ensuremath{\mathrm{CoRep}_{\mathrm{fin}}}}
\renewcommand{\theta}{\vartheta}
\renewcommand{\phi}{\varphi}
\renewcommand{\epsilon}{\varepsilon}
\renewcommand{\subset}{\subseteq}
\renewcommand{\supset}{\supseteq}
\newcommand{\N}{\mathbb N}
\newcommand{\Z}{\mathbb Z}
\newcommand{\C}{\mathbb C}
\newcommand{\F}{\mathbb F}
\newcommand{\Katprim}{\langle\primarypart\rangle}
\newcommand{\Katfat}{\langle\fatcrosspart\rangle}
\newcommand{\AKat}{A_{\mathcal C}(n)}
\newcommand{\AKatprime}{A_{\mathcal C'}(n)}
\newcommand{\idpart}{|}
\newcommand{\paarpart}{\sqcap}
\newcommand{\baarpartbaustein}{\rotatebox{180}{$\sqcap$}}
\newcommand{\baarpart}{
\mathrel{\vcenter{\offinterlineskip \hbox{$\baarpartbaustein$}}}}
\newcommand{\upsubset}{\begin{rotate}{90}$\subset$\end{rotate}}
\newcommand{\downsubset}{\begin{turn}{270}$\subset$\end{turn}}
\newcommand{\singleton}{\uparrow}
\newcommand{\doublesingleton}{\singleton\otimes\singleton}
\newcommand{\vierpart}{
\mathrel{\offinterlineskip
\hskip0ex\hbox{$\sqcap$}\hskip -.4ex\hbox{$\sqcap$} \hskip -0.4ex\hbox{$\sqcap$}}}
\newcommand{\vierpartrot}{
\mathrel{\vcenter{\offinterlineskip
\hbox{$\baarpart$} \vskip -.1ex \hbox{$\shortmid$} \vskip -.1ex \hbox{$\paarpart$}}}}
\newcommand{\crosspart}{
\mathrel{\offinterlineskip
\hbox{$/$}\hskip -.95ex\hbox{$\backslash$}}}
\newcommand{\fatcrosspart}{
\mathrel{\vcenter{\offinterlineskip
\hbox{$\baarpart\baarpart$} \vskip -0.2ex \hbox{\hskip .75ex $\times$} \vskip -.2ex \hbox{$\paarpart\paarpart$}}}}
\newcommand{\primarypart}{
\mathrel{\vcenter{\offinterlineskip
\hbox{$\baarpart$} \vskip -1.3ex \hbox{\hskip1.3ex$/$\hskip-1.2ex$-$} \vskip -1.2ex \hbox{\hskip2.2ex $\paarpart$}}}}
\newcommand{\midmid}{
\mathrel{\vcenter{\offinterlineskip
\hbox{$\shortmid$} \vskip -1.6ex \hbox{$\shortmid$}}}}
\newcommand{\halflibpart}{
\mathrel{\offinterlineskip
\hbox{$\bigtimes$}\hskip -1.55ex\hbox{$\midmid$}}}
\newcommand{\liegeblkn}{\begin{turn}{270}$[$\end{turn}}
\newcommand{\liegebalken}{
\mathrel{\vcenter{\offinterlineskip
\vskip -1.3ex \hbox{$\liegeblkn$}}}}
\newcommand{\longpr}{
\mathrel{\offinterlineskip
\hskip0ex\hbox{$\shortmid$}\hskip -.8ex\hbox{$\liegebalken$} \hskip -0.75ex\hbox{$\shortmid$}}}
\newcommand{\hochlongpr}{
\mathrel{\vcenter{\offinterlineskip
\vskip -.5ex \hbox{$\longpr$}}}}
\newcommand{\longpair}{
\mathrel{\offinterlineskip
\hskip0ex\hbox{$\shortmid$}\hskip -1.4ex\hbox{$\hochlongpr$} \hskip -1.4ex\hbox{$\shortmid$}}}
\newcommand{\legpart}{
\mathrel{\offinterlineskip
\hskip0ex\hbox{$\shortmid$}\hskip +.8ex\hbox{$\shortmid$} \hskip -2.5ex\hbox{$\longpair$}}}
\newcounter{PartitionDepth}
\newcounter{PartitionLength}
\newcommand{\partii}[3]{
 \begin{picture}(#3,#1)
 \setcounter{PartitionLength}{#3-#2}
 \setcounter{PartitionDepth}{-1-#1}
 \put(#2,\thePartitionDepth){\line(0,1){#1}}     
 \put(#3,\thePartitionDepth){\line(0,1){#1}}
 \put(#2,\thePartitionDepth){\line(1,0){\thePartitionLength}}
 \end{picture}}
\newcommand{\partiii}[4]{
 \begin{picture}(#4,#1)
 \setcounter{PartitionLength}{#4-#2}
 \setcounter{PartitionDepth}{-1-#1}
 \put(#2,\thePartitionDepth){\line(0,1){#1}}
 \put(#3,\thePartitionDepth){\line(0,1){#1}}
 \put(#4,\thePartitionDepth){\line(0,1){#1}}
 \put(#2,\thePartitionDepth){\line(1,0){\thePartitionLength}} 
 \end{picture}}
\newcommand{\upparti}[2]{
 \begin{picture}(#2,#1)
 \setcounter{PartitionDepth}{#1}
 \put(#2,0){\line(0,1){#1}}
 \end{picture}}
\newcommand{\uppartii}[3]{
 \begin{picture}(#3,#1)
 \setcounter{PartitionLength}{#3-#2}
 \setcounter{PartitionDepth}{#1}
 \put(#2,0){\line(0,1){#1}}     
 \put(#3,0){\line(0,1){#1}}
 \put(#2,\thePartitionDepth){\line(1,0){\thePartitionLength}}
 \end{picture}}
\newcommand{\uppartiii}[4]{
 \begin{picture}(#4,#1)
 \setcounter{PartitionLength}{#4-#2}
 \setcounter{PartitionDepth}{#1}
 \put(#2,0){\line(0,1){#1}}
 \put(#3,0){\line(0,1){#1}}
 \put(#4,0){\line(0,1){#1}}
 \put(#2,\thePartitionDepth){\line(1,0){\thePartitionLength}} 
 \end{picture}}
\newcommand{\uppartiv}[5]{
 \begin{picture}(#5,#1)
 \setcounter{PartitionLength}{#5-#2}
 \setcounter{PartitionDepth}{#1}
 \put(#2,0){\line(0,1){#1}}
 \put(#3,0){\line(0,1){#1}}
 \put(#4,0){\line(0,1){#1}}
 \put(#5,0){\line(0,1){#1}}
 \put(#2,\thePartitionDepth){\line(1,0){\thePartitionLength}} 
 \end{picture}}
\newcommand{\diag}{\ensuremath{\mathrm{diag}}}
\begin{document}

\author{Sven Raum and Moritz Weber}

\begin{center}
{\LARGE \bf {A Connection between Easy Quantum Groups,\\ Varieties of Groups and Reflection Groups}}

\bigskip

{\sc by Sven Raum$^{(1)}$ and Moritz Weber
  \setcounter{footnote}{1}
  \footnotetext{Supported by KU Leuven BOF research grant OT/08/032}
}
\end{center}

\begin{abstract}
We present a link between easy quantum groups, discrete groups and combinatorics.  By this, we infer new connections between quantum isometry groups, reflection groups, varieties of groups and the combinatorics of partitions.  More precisely, we consider easy quantum groups and find a relation to subgroups of the infinite free product $\mathbb Z_2^{*\infty}$ of $\mathbb Z_2=\mathbb Z/2\mathbb Z$.  We obtain a link with reflection groups and thus with varieties of groups, which yields a statement on the complexity of the class of easy quantum groups on the one hand, and a ``quantum invariant'' for varieties of groups on the other hand.  Moreover, we reveal a triangular relationship between easy quantum groups, categories of partitions and discrete groups (reflection groups).  As a by-product, we obtain a large number of new quantum isometry groups.
\end{abstract}

\section*{Introduction}

Easy quantum groups were first considered by Banica and Speicher in 2009 \cite{banicaspeicher09}.  This subclass of the class of compact matrix quantum groups \cite{woronowicz87} is the object of our investigations.  By Woronowicz \cite{woronowicz88}, their representations form a concrete tensor \Cstar-category, and by the work of Banica and Speicher, it can be described by the combinatorics of categories of partitions.  We construct an injection from categories of partitions to subgroups of the infinite free product $\mathbb Z_2^{*\infty}$ of $\mathbb Z_2=\mathbb Z/2\mathbb Z$. From this, we may deduce several relations of easy quantum groups to other fields of mathematics.

Firstly, we find quantum groups associated with varieties of groups \cite{neumann67}.  A variety of groups is the class of all groups that satisfy a given set of identical relations (see Section \ref{sec:introduction-to-varieties}).
We obtain a triangular relation of the form:

\hspace{1.5cm}
\begin{xy}
\xymatrix{
  \begin{minipage}{4cm}
    \begin{center}
      categories of partitions
    \end{center}
  \end{minipage}
  \ar[dr]
  \ar[rr]
  & &
  \begin{minipage}{4cm}
    \begin{center}
      easy quantum groups
    \end{center}
  \end{minipage}
  \ar[ll]
  \ar[dl]
  \\ &
  \begin{minipage}{3cm}
    \begin{center}
      varieties of groups
    \end{center}
  \end{minipage}
  \ar[ur]
  \ar[ul]
}
\end{xy}

As there are uncountably many varieties of groups \cite{olshanskii70}, we infer that there are uncountably many easy quantum groups. This is a contribution to the classification of easy quantum groups.

Secondly, we obtain a link to quantum isometry groups. A quantum isometry group is the maximal quantum group acting faithfully by isometries on a non-commutative space.  It is the non-commutative replacement of the isometry group.  Quantum isometry groups were studied by Bichon \cite{bichon03}, Banica \cite{banica05-homogeneous-graphs, banica05-small-metric-spaces}, Goswami \cite{goswami09}, Bhowmick and Goswami \cite{bhowmickgoswami09-computations, bhowmickgoswami09-riemannian} and Banica and Skalski \cite{banicaskalski11}.  Banica and Skalski first studied the quantum isometry groups of discrete group duals in \cite{banicaskalski12-group-duals}.  Other examples of such quantum isometry groups where studied by Liszka-Dalecki and So{\l}tan \cite{liszka-daleckisoltan12} and Tao and Qiu \cite{taoqiu12-dihedral-group}.  Notably,  Banica and Skalski related in \cite{banicaskalski11-two-parameter} quantum isometry groups and easy quantum groups for the first time.  We reveal the following triangular relation:

\hspace{1.5cm}
\begin{xy}
\xymatrix{
  \begin{minipage}{4cm}
    \begin{center}
      categories of partitions
    \end{center}
  \end{minipage}
  \ar[dr]
  \ar[rr]
  & &
  \begin{minipage}{4cm}
    \begin{center}
      easy quantum groups
    \end{center}
  \end{minipage}
  \ar[ll]
  \ar[dl]
  \\ &
  \begin{minipage}{3cm}
    \begin{center}
      reflection groups
    \end{center}
  \end{minipage}
  \ar[ur]
  \ar[ul]
}
\end{xy}

The map from reflection groups to easy quantum groups is basically given by the quantum isometry group construction of group duals.

Let us now give an introduction to our methodology.
Next to $q$-deformations of semisimple Lie groups \cite{jimbo85,drinfeld86,woronowicz87,rosso90-version-duke} and quantum isometry groups, quantum groups may arise in the context of the liberation of groups (see \cite{banicaspeicher09} for a concise introduction). The idea is the following.

Let $G$ be a (classical) orthogonal Lie group and consider the \Cstar-algebra $\rC(G)$ of continuous functions on $G$. By means of the fundamental representation $u \in \rC(G) \ot \Cmat{n}$, we can view $\rC(G)$ as a universal $\Cstar$-algebra:
\[\rC(G) \cong \Cstar\left(u_{ij}, 1 \leq i,j \leq n \amid \text{the matrices }(u_{ij}) \text{ and }  (u_{ij}^*) \text{ are unitaries}, u_{ij}u_{kl}=u_{kl}u_{ij}, (R_G) \right) \eqcomma \]
where $(R_G)$ are some further relations of the generators $u_{ij}$.  The liberation $G^+$ of $G$ is a compact matrix quantum group given by the universal \Cstar-algebra
\[\rC(G^+) = \Cstar\left(u_{ij}, 1 \leq i,j \leq n \amid \text{the matrices }(u_{ij}) \text{ and }  (u_{ij}^*) \text{ are unitaries}, (R_G) \right) \eqcomma\]
where we omit the commutativity of the generators $u_{ij}$.

In this sense, Wang \cite{wang95, wang98} constructed three free quantum groups, namely the free orthogonal, the free unitary, and the free symmetric quantum group, liberating the groups $O_n$, $U_n$ and $S_n$. A further example is the hyperoctahedral quantum group $H_n^+$ given by Banica, Bichon, and Collins \cite{banicabichoncollins07}.

The intertwiner spaces of $S_n$, $S_n^+$, $O_n$ and $O_n^+$  admit a combinatorial description by means of partitions.  The process of liberation is reflected by restricting to those partitions that are non-crossing.
In their 2009 article \cite{banicaspeicher09}, Banica and Speicher initiated a systematic study of easy quantum groups, i.e. of those compact matrix quantum groups whose intertwiner spaces are described by the combinatorics of categories of partitions (see Definition 6.3 of \cite{banicaspeicher09} or Definition 1.4 of \cite{weber12}). 
This class of quantum groups includes $S_n^+$, $O_n^+$, and $H_n^+$, as well as the groups $S_n$, $O_n$ and $H_n$, but it goes far beyond the question of liberation of groups. Roughly speaking, it contains all compact quantum groups $G$ with $S_n\subset G\subset O_n^+$, whose intertwiner spaces ``have a nice combinatorics''. It is a consequence of the seminal work by Woronowicz \cite{woronowicz88} that the correspondence between easy quantum groups and their categories of partitions is one-to-one.

The work on easy quantum groups has been continued by Banica, Bichon, Curran, Skalski, So{\l}tan, Speicher, Vergnioux, and the authors of the present article in a couple of articles \cite{banicavergnioux09, banicavergnioux09_2,banicacurranspeicher09_2, banicacurrranspeicher09, raum10, banicacurran10-gram-matrix, banicabichoncollinscurran11, banicaskalskisoltan12-homoeneous-spaces}.  It has become a useful link between quantum groups, combinatorics and free probability theory \cite{koestlerspeicher09-de-finetti, curranspeicher11-infinitesimal-freeness, curranspeicher11-quantum-invariant-families, banicacurranspeicher12-finetti}.  At the same time, easy quantum groups give rise to interesting operator algebras \cite{vaesvergnioux05,brannan11,freslon12,isono12}. 

The approach of Banica and Speicher constitutes a constructive view on the liberation of groups, and it is still a source of a large number of new examples of compact quantum groups. Amongst others, it lead to the discovery of further examples of free quantum groups (see Theorem 3.16 of \cite{banicaspeicher09} and Corollary 2.10 of \cite{weber12}). These free easy quantum groups (also called free orthogonal quantum groups) and likewise the easy groups were completely classified by Banica and Speicher \cite{banicaspeicher09}, and by the second author \cite{weber12}. Furthermore, examples of half-liberated easy quantum groups were given by Banica, Curran, Speicher, and the second author \cite{banicaspeicher09, banicacurranspeicher09_2, weber12}, and they were completely classified in \cite{weber12}. The half-liberation is given by replacing the commutation relation
\[u_{ij}u_{kl}=u_{kl}u_{ij}\]
by
\[u_{ij}u_{kl}u_{rs}=u_{rs}u_{kl}u_{ij} \eqcomma\]
which can be interpreted as a slight weakening of commutativity.

It quickly turned out, that there are even more easy quantum groups than the above mentioned -- and in the present article, we show that there are in fact uncountably many. 
While the classification of non-hyperoctahedral easy quantum groups is complete \cite{banicacurranspeicher09_2,weber12}, the case of hyperoctahedral easy quantum groups is still open. Hyperoctahedral easy quantum groups are quantum subgroups of $H_n^+$ corresponding to hyperoctahedral categories of partitions, i.e. categories which contain the four block partition $\vierpart$ (four points, which are all connected) but not the double singleton $\singleton\otimes\singleton$ (two points, which are not connected). See Section \ref{sec:categories-of-partitions} for details on partitions and categories.

We isolate a large class of hyperoctahedral easy quantum groups -- which we call \emph{simplifiable} -- with the commutation relations
\[u_{ij}^2u_{kl}=u_{kl}u_{ij}^2 \eqstop\]
By this, the focus is put onto a quite unexplored class of partitions, and new questions arise.  The main feature of these quantum groups is that the squares of the generators $u_{ij}$ commute, whereas the elements $u_{ij}$ itself behave rather like free elements.  This mixture of commutative and non-commutative structures could play a special role in the understanding of non-commutative distributions.
We show that this class is very rich, since it inherits from the correspondence with varieties of groups an interesting structure.  In particular, there are uncountably many simplifiable hyperoctahedral easy quantum groups.

The technical heart of this article is worked out in Sections \ref{sec:simplifiable-categories} and \ref{sec:group-theoretic-framework}, where we construct a map from simplifiable hyperoctahedral categories of partitions to subgroups of $\mathbb Z_2^{*\infty}$.  Given such a category $\mathcal C$, we label the partitions in $\mathcal C$ according to their block structure by letters $a_1,a_2,\dotsc$ in order to obtain words.  Mapping these words to $\mathbb Z_2^{*\infty}$ (where now $a_i^2=e$), we obtain the following main result:

\begin{thmstar}[See Theorem \ref{thm:F-is-1-1}]
\label{thm:intro-technical-result}
 There is a lattice isomorphism between simplifiable hyperoctahedral categories of partitions and proper $S_0$-invariant subgroups of $E$, 
 where $E$ is the subgroup of $\mathbb Z_2^{*\infty}$ consisting of all words of even length.
\end{thmstar}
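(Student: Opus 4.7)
The plan is to upgrade the block-labeling sketched in the introduction to a genuine bijection. Given a partition $p$ with $k+l$ points (viewed as a morphism from $k$ upper points to $l$ lower points), I would enumerate the blocks $B_1,\dotsc,B_r$ of $p$ in some order, label each point lying in $B_i$ by the generator $a_i$ of $\mathbb{Z}_2^{*\infty}$, and read off these labels in the natural left-to-right order (upper points first, then lower points) to form a word $w(p)$. Since $\mathcal{C}$ is hyperoctahedral, every block of $p$ has even size --- the standard consequence of $\vierpart \in \mathcal{C}$ together with $\singleton\otimes\singleton \notin \mathcal{C}$ --- so each generator appears an even number of times in $w(p)$, whence $w(p) \in E$. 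Reordering the enumeration of blocks merely applies a permutation of the generators to $w(p)$, so $w(p)$ is well-defined modulo the action of $S_0$. I would then set $F(\mathcal{C}) := \langle w(p) \mid p \in \mathcal{C} \rangle$.

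Next I would verify that $F(\mathcal{C})$ is an $S_0$-invariant proper subgroup of $E$. The $S_0$-invariance is automatic from the labeling ambiguity. The subgroup property should reflect compatibility between category operations and group operations: tensor product of partitions corresponds, after relabeling, to concatenation of words; the involution (rotation) corresponds to reversal of the word, which in $\mathbb{Z}_2^{*\infty}$ coincides with inversion since $a_i^{-1}=a_i$; and composition of partitions corresponds to a controlled cancellation where matched points produce $a_i a_i = e$ reductions. This last point is exactly where the hypothesis of \emph{simplifiability} is decisive: it ensures that passing from partitions to elements of $\mathbb{Z}_2^{*\infty}$ (where the relation $a_i^2=e$ is imposed) loses no categorical data. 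Properness follows because $\singleton \otimes \singleton \notin \mathcal{C}$: the associated word has the shape $a_i a_j$ with $i \ne j$, and $S_0$-invariance together with the computation $(a_i a_j)(a_j a_k) = a_i a_k$ shows that the presence of any such word in $F(\mathcal{C})$ would force $F(\mathcal{C}) = E$.

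For the inverse, given a proper $S_0$-invariant subgroup $H \leq E$, I would define $G(H) := \{p : w(p) \in H\}$. Checking that $G(H)$ is a simplifiable hyperoctahedral category is essentially the reverse of the verification above; the $S_0$-invariance of $H$ is exactly what makes $G(H)$ independent of the labeling ambiguity, and the properness of $H$ guarantees $\singleton\otimes\singleton \notin G(H)$. The identity $G(F(\mathcal{C})) = \mathcal{C}$ would then follow because any partition whose word lies in $\langle w(q) : q \in \mathcal{C}\rangle$ can be reconstructed, up to category operations, from the generating partitions; conversely $F(G(H)) = H$ because every element of $H$ is, by construction of $G(H)$, the word of some partition in $G(H)$.

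Preservation of the lattice structure is then formal: both sides take intersections to intersections and joins (generated subcategory, respectively generated subgroup) to joins. In my view, the main obstacle is the detailed compatibility of partition composition with the group product. One has to track precisely how blocks merge when points are identified during composition, how loops that arise and disappear correspond to $a_i a_i = e$ reductions in $\mathbb{Z}_2^{*\infty}$, and how the simplifiability relation $u_{ij}^2 u_{kl} = u_{kl} u_{ij}^2$ is the exact categorical shadow of the relation $a_i^2=e$ that makes this cancellation unambiguous. Once this compatibility is pinned down, the remainder is bookkeeping.
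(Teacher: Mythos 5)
Your strategy is the same as the paper's (label blocks by letters, read off a word in $\ZZ_2^{*\infty}$, and match categories with $S_0$-invariant subgroups), but as written it has two genuine gaps. First, the claim that every block of a partition in a hyperoctahedral category has even size is false: the $s$-mixing partition $h_s$ for odd $s$, e.g.\ $ababab\in\langle\vierpart,h_3\rangle$, has two blocks of size $3$. The correct reason that $w(p)\in E$ is that the \emph{total} number of points of any $p\in\cC$ is even (an odd partition would produce the singleton $\singleton$, hence $\singleton\ot\singleton\in\cC$, by capping with pair partitions). This also undercuts your properness argument as stated, since you can no longer read off parity letter by letter; the paper instead argues that $F(\cC)=E$ would force some partition of $\cC$ to have a word in which a letter occurs exactly once, which again contradicts hyperoctahedrality after capping.

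Second, and more seriously, the injectivity step $G(F(\cC))=\cC$ is precisely the hard part of the theorem and you only assert it (``can be reconstructed, up to category operations''). Two things are needed here and neither is routine bookkeeping: (a) the \emph{set} of words $\pi(w(p,l))$, $p\in\cC$, is already a subgroup (products of partition-words are again partition-words, via tensoring and then joining arbitrary blocks -- which needs the pair positioner partition $\primarypart$); without this, your ``generated subgroup'' $F(\cC)$ may a priori contain elements not arising from any partition of $\cC$, and the argument that a partition with word in $F(\cC)$ must lie in $\cC$ collapses. And (b) two partitions have the same image in $\ZZ_2^{*\infty}$ iff they are related by insertion/deletion of consecutive pairs of points in a common block, and such equivalent partitions are simultaneously in or out of $\cC$ -- this is exactly where simplifiability enters, through the reduction to single leg form. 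You correctly flag composition-compatibility as the main obstacle, but in the paper's proof the composition of partitions in the inverse direction ($\cC_H$ closed under composition) is handled by a separate explicit construction, while the forward direction hinges on (a) and (b) above; leaving all of this ``to be pinned down'' leaves the core of the theorem unproved.
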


Here, $S_0$ is the subsemigroup of $\textnormal{End}(\mathbb Z_2^{*\infty})$ generated by all inner automorphisms and by finite identifications of letters.
This way, we obtain a one-to-one correspondence with a class of invariant subgroups of $\F_\infty$, which contains the lattice of fully characteristic subgroups of $\F_\infty$.  This lattice in turn is anti-isomorphic to the lattice of varieties of groups \cite{neumann67}.  See Section \ref{sec:introduction-to-varieties} for an introduction to varieties of groups and fully characteristic subgroups. 
By Olshanskii \cite{olshanskii70}, there are uncountably many varieties of groups. Hence, we derive the following theorem.

\begin{thmstar}[See Theorems \ref{thm:easy-quantum-groups-and-varieties}, and \ref{thm:uncountably-many-easy-quantum-groups}]
\label{thm:intro-varieties-qgs}
  There is an injection of lattices of varieties of groups into the lattice of easy quantum groups.  In particular, there are uncountably many easy quantum groups that are pairwise non-isomorphic.
\end{thmstar}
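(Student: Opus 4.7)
The plan is to assemble the injection as the composition of three lattice-preserving maps and then invoke Olshanskii's cardinality result.

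First, I invoke the classical correspondence (due to B.H.\ Neumann, see the reference to the introduction on varieties) that associates to every variety of groups $\mathcal V$ the fully characteristic subgroup $N_{\mathcal V}\subset \F_\infty$ consisting of all words in countably many free generators that are identities in every group of $\mathcal V$. This assignment is an anti-isomorphism between the lattice of varieties of groups and the lattice of fully characteristic subgroups of $\F_\infty$, where ``fully characteristic'' means invariant under every endomorphism of $\F_\infty$.

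Second, I construct an embedding from the lattice of fully characteristic subgroups of $\F_\infty$ into the lattice of proper $S_0$-invariant subgroups of $E$. The subgroup $E \subset \Z_2^{*\infty}$ of even-length words has index $2$, and by Kurosh's subgroup theorem it is free of countably infinite rank, since it meets every conjugate of a $\Z_2$-factor trivially. I would fix a concrete free basis, for instance the elements $b_i := a_1 a_{i+1}$ for $i \ge 1$, identifying $E$ with $\F_\infty$. Under this identification, the inner automorphisms of $\Z_2^{*\infty}$ restrict to automorphisms of $E$ (because $E$ is normal), and the ``finite identification of letters'' endomorphism $a_i \mapsto a_j$ of $\Z_2^{*\infty}$ restricts to an endomorphism of $E$ that is visibly of the form ``substitute generators for generators (or the identity)'' in the basis $\{b_i\}$. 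Hence the semigroup generated by these restrictions is contained in $\mathrm{End}(\F_\infty)$. Therefore every fully characteristic subgroup of $\F_\infty$ is in particular $S_0$-invariant as a subgroup of $E$, and the assignment $N \mapsto N$ (under the identification) is an injection of lattices. Properness is automatic: a fully characteristic subgroup equal to $\F_\infty$ corresponds to the trivial variety, which I exclude (or handle separately by sending it to the full category).

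Third, I apply Theorem \ref{thm:F-is-1-1} to translate proper $S_0$-invariant subgroups of $E$ into simplifiable hyperoctahedral categories of partitions, and hence, by Tannaka--Krein--Woronowicz, into easy quantum groups. Since lattice isomorphisms preserve injectivity, the composition yields an injection from the lattice of varieties of groups into the lattice of easy quantum groups, proving the first part of the theorem. For the second part, I combine this injection with Olshanskii's theorem \cite{olshanskii70} that there are $2^{\aleph_0}$ varieties of groups to conclude that the class of pairwise non-isomorphic easy quantum groups is uncountable.

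The main obstacle will be the second step: to verify cleanly that, under the chosen identification $E \cong \F_\infty$, every endomorphism in $S_0$ really corresponds to an endomorphism of $\F_\infty$, so that fully characteristic subgroups of $\F_\infty$ are indeed $S_0$-invariant. Care must be taken because inner automorphisms of $\Z_2^{*\infty}$ by odd-length elements act on $E$ by conjugation composed with a fixed outer twist, and because letter identifications can collapse distinct $b_i$ onto a common word in $\F_\infty$; one must check that the resulting maps lie inside $\mathrm{End}(\F_\infty)$ rather than merely inside some larger semigroup. Once this compatibility is established, all remaining verifications (lattice structure, injectivity, properness) are formal.
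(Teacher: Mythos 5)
Your proposal is correct and takes essentially the same route as the paper: it composes Neumann's anti-isomorphism between varieties and fully characteristic subgroups (Theorem \ref{thm:varieties-and-fully-characteristic-subgroups}), the identification $E \cong \freegrp{\infty}$ via the basis $x_k = a_1a_{k+1}$ together with the observation that the restriction of $S_0$ to $E$ lies in $\End(\freegrp{\infty})$ (Lemma \ref{lem:E-is-free+restriction-of-S_0} and Remark \ref{rem:fully-characteristic-is-S-invariant}), the bijection of Theorem \ref{thm:F-is-1-1}, and Olshanskii's theorem. The compatibility issue you single out as the main obstacle is precisely what the paper settles in Lemma \ref{lem:E-is-free+restriction-of-S_0}, where the restrictions of the generators of $S_0$ are computed explicitly in the basis $x_1, x_2, \dotsc$ and seen to be endomorphisms of $\freegrp{\infty}$.
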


The correspondence between categories of partitions, reflection groups and easy quantum groups is explained in Section \ref{sec:triangular-relationship}.  It is based on the following result.  If $H$ is an $S_0$-invariant subgroup of $\ZZ_2^{* \infty}$, denote by $(H)_n$ the set of all words in $H$ that involve at most the first $n$ letters of $\ZZ_2^{\infty}$.  Denote by $H_n^{[\infty]}$ the maximal simplifiable hyperoctahedral easy quantum group.

\begin{thmstar}[See Theorems \ref{thm:F-and-diagonal-subgroup} and \ref{thm:qiso-groups}]
\label{thm:intro-reflection-groups-qgs}
  If $H \leq E \leq \ZZ_2^{* \infty}$ is a proper $S_0$-invariant subgroup of $E$, then
  \[
    H_n^{[\infty]} \cap \mathrm{QISO}(\Cstar(\ZZ_2^{*n}/(H)_n))
  \]
  is a simplifiable hyperoctahedral easy quantum group.

  Vice versa, the diagonal subgroup of any simplifiable hyperoctahedral easy quantum group is of the form $\ZZ_2^{* n}/(H)_n$ for some proper $S_0$-invariant subgroup $H \leq E$.  Moreover, these two operations are inverse to each other.
\end{thmstar}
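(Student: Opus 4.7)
The plan is to bootstrap from Theorem \ref{thm:F-is-1-1} (the lattice isomorphism between simplifiable hyperoctahedral categories of partitions and proper $S_0$-invariant subgroups of $E$) together with Woronowicz's Tannaka--Krein duality and the Banica--Skalski description of $\qiso$ of group duals. Let $\mathcal{C}$ be the simplifiable hyperoctahedral category with $F(\mathcal{C}) = H$, and let $G_{\mathcal{C}} \subseteq H_n^{[\infty]}$ be the associated easy quantum group. The claim is that $G_{\mathcal{C}} = H_n^{[\infty]} \cap \qiso(\Cstar(\ZZ_2^{*n}/(H)_n))$ and that $\Cstar(\ZZ_2^{*n}/(H)_n)$ is exactly the diagonal subalgebra of $\rC(G_{\mathcal{C}})$.

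First I would compute the diagonal subgroup of $G_{\mathcal{C}}$. Since $G_{\mathcal{C}}$ is hyperoctahedral, the generators $g_i := u_{ii}$ are self-adjoint with $g_i^2$ a projection, and the diagonal subalgebra is the image of the unital $*$-homomorphism $\Cstar(\ZZ_2^{*n}) \to \rC(G_{\mathcal{C}})$ sending a letter $a_i$ to $g_i$. By the very definition of $F$, a partition $p \in \mathcal{C}$ produces, after labelling blocks by letters, a word $w_p \in \ZZ_2^{*\infty}$, and the Tannaka--Krein relations induced by $p$ evaluated on the diagonal generators reduce to the relation $w_p = e$ whenever only the first $n$ letters occur. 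Hence the diagonal subgroup is precisely $\ZZ_2^{*n}/(H)_n$, which identifies the diagonal subalgebra with $\Cstar(\ZZ_2^{*n}/(H)_n)$.

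Next I would establish the inclusion $G_{\mathcal{C}} \subseteq H_n^{[\infty]} \cap \qiso(\Cstar(\ZZ_2^{*n}/(H)_n))$. The inclusion in $H_n^{[\infty]}$ is automatic because simplifiable hyperoctahedral means $\mathcal{C}$ contains the defining partitions of $H_n^{[\infty]}$. The inclusion in $\qiso$ follows by checking that the canonical $\rC(G_{\mathcal{C}})$-coaction on its diagonal subalgebra is an isometric coaction in the sense of Banica--Skalski, which is automatic since the coaction is just the restriction of $\alpha_{ij} \mapsto \sum_k g_k \otimes u_{ki}$ and preserves the canonical trace/$\ell^2$-norm on the group dual.

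The reverse inclusion is the main obstacle. I would argue as follows: the quantum group $Q := H_n^{[\infty]} \cap \qiso(\Cstar(\ZZ_2^{*n}/(H)_n))$ is itself a quantum subgroup of $H_n^{[\infty]}$, so by Banica--Speicher it is easy if and only if its intertwiner spaces come from a category of partitions. Since $Q$ has diagonal $\Cstar(\ZZ_2^{*n}/(H)_n)$ (the $\qiso$ factor forces the diagonal not to be larger, and by Banica--Skalski's classification of $\qiso$ of group duals the diagonal is exactly the original group dual), its associated category $\mathcal{C}'$ must satisfy $F(\mathcal{C}') \cap (\textnormal{first } n \textnormal{ letters}) = (H)_n$. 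Invoking the lattice isomorphism of Theorem \ref{thm:F-is-1-1} together with a stabilisation argument as $n \to \infty$ (using that the $S_0$-invariance is detected by arbitrarily long words) forces $\mathcal{C}' = \mathcal{C}$, hence $Q = G_{\mathcal{C}}$.

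Finally, for the converse direction, given a simplifiable hyperoctahedral easy quantum group $G$ with category $\mathcal{C}$, set $H := F(\mathcal{C})$. Theorem \ref{thm:F-is-1-1} guarantees $H$ is a proper $S_0$-invariant subgroup of $E$, and the computation above shows the diagonal subgroup of $G$ is $\ZZ_2^{*n}/(H)_n$. The fact that these operations are mutually inverse follows by combining the lattice isomorphism with the one-to-one correspondence between easy quantum groups and categories of partitions (Woronowicz's Tannaka--Krein, as reformulated in \cite{banicaspeicher09}). The subtle point that deserves care is that the intersection with $H_n^{[\infty]}$ cannot add spurious intertwiners outside $\mathcal{C}$; this is precisely where the hypothesis that $H$ is $S_0$-invariant (and not merely normal) enters, via the closure of $\mathcal{C}$ under the category-of-partitions operations corresponding to conjugation and identification of letters.
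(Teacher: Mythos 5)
Your overall architecture matches the paper's: the statement is obtained by combining a computation of the diagonal subgroup (Theorem \ref{thm:F-and-diagonal-subgroup}) with an identification of the maximal quantum subgroup of $H_n^{[\infty]}$ acting isometrically on $\Cstarmax(\ZZ_2^{*n}/(H)_n)$ (Theorem \ref{thm:qiso-groups}), and your first and last paragraphs reproduce that skeleton correctly. There are, however, two genuine gaps in the middle.

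First, in the forward inclusion you claim the coaction of $\rC(G_{\mathcal C})$ on its diagonal subalgebra is isometric ``automatically'' because it preserves the canonical trace. Isometric in the Banica--Skalski sense means preservation of the word-length filtration, $\alpha(L_k)\subseteq L_k\ot\mathrm{Pol}(A)$, which is strictly stronger than trace preservation and is not automatic. The paper proves it by first establishing that $u_{i(1)j(1)}\dotsm u_{i(k)j(k)}\neq 0$ forces the word $a_{j(1)}\dotsm a_{j(k)}$ to arise from $a_{i(1)}\dotsm a_{i(k)}$ by a permutation of letters; this rests on the support projections $u_{ij}^2$ being central and pairwise orthogonal along rows and columns, i.e.\ it is exactly where simplifiability (the pair positioner partition) enters. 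You also do not verify that the coaction is well defined on the quotient $\Cstarmax(\ZZ_2^{*n}/(H)_n)$ in the first place.

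Second, and more seriously, your reverse inclusion is circular. You set $Q=H_n^{[\infty]}\cap\qiso(\Cstar(\ZZ_2^{*n}/(H)_n))$ and immediately speak of ``its associated category $\mathcal C'$'', but a priori $Q$ is only a compact matrix quantum subgroup of $H_n^{[\infty]}$, and there is no reason it should be easy; the Banica--Speicher correspondence tells you what easiness would mean, not that $Q$ enjoys it. The paper instead takes an arbitrary quantum subgroup $(A,u)$ of $H_n^{[\infty]}$ acting faithfully and isometrically and deduces directly from the isometry condition (the action sends $\pi(a_{i(1)}\dotsm a_{i(k)})$ to $1\ot 1$ precisely when the word lies in $(H)_n$, combined again with the permutation-of-letters claim) that $T_p\ot 1=u^{\ot k}(T_p\ot 1)$ for every $p\in\cC_H$, i.e.\ that $(A,u)$ is a quotient of $A_{\cC_H}(n)$; maximality then follows because $A_{\cC_H}(n)$ itself acts isometrically, and only at that point is the maximal object known to be easy. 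Your ``stabilisation as $n\to\infty$'' is also out of place, since the statement concerns a fixed $n$. Finally, you ignore the transpose issue: the natural action uses $u^t$ rather than $u$, and the paper needs Corollary \ref{cor:coopposite} (every easy quantum subgroup of $H_n^{[\infty]}$ is isomorphic to its coopposite) to close this loop.
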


This correspondence in connection with Theorem \ref{thm:intro-varieties-qgs}, yields a large class of examples of non-classical quantum isometry groups.


\subsection*{Acknowledgements}
\label{sec:acknowledgements}

The first author thanks Roland Speicher for inviting him to Saarbr\"ucken, where this work was initiated in June 2012.  Both authors thank Stephen Curran for pointing out an error at an early stage of this work.  We are very grateful to Teodor Banica and Adam Skalski for useful discussions on an earlier version of this paper.  We proved the results of Section \ref{sec:realation-qiso-groups} only after Teodor Banica asked whether there is a direct relationship between discrete groups and simplifiable hyperoctahedral easy quantum groups.


\section{Preliminaries and notations}
\label{sec:preliminaries}

In the whole paper, tensor products of \Cstar-algebras are taken with respect to the minimal \Cstar-norm.

\subsection{Compact quantum groups and compact matrix quantum groups}
\label{sec:compact-quantum-groups}

In \cite{woronowicz98}, Woronowicz defines a \emph{compact quantum group} (CQG) as a unital \Cstar-algebra $A$ with a unital *-homomorphism $\Delta: A \ra A \ot A$ such that
\begin{itemize}
\item $\Delta$ is \emph{coassociative}, i.e. $(\Delta \ot \id) \circ \Delta = (\id \ot \Delta) \circ \Delta$,
\item $(A,\Delta)$ is \emph{bisimplifiable}, i.e. the subspaces $\lspan \Delta(A)(1 \ot A)$ and $\lspan \Delta(A)(A \ot 1)$ are dense in $A \ot A$.
\end{itemize}
If $(A, \Delta)$ is a CQG, then $\Delta$ is called its \emph{comultiplication}. Note that the bisimplifiability condition is in fact an assumption on left and right cancellation  (see \cite[Remark 3]{woronowicz98} or \cite[Proof of Proposition 5.1.3]{timmermann08}).  A morphism between two CQGs $A$ and $B$ is a unital *-homomorphism $\phi: A \ra B$ such that $(\phi \ot \phi) \circ \Delta_A = \Delta_B \circ \phi$.  We say that $A$ is a \emph{quantum subgroup} of $B$ if there is a surjective morphism $B \thra A$, and they are \emph{isomorphic} if there is a bijective morphism between them.

A \emph{unitary corepresentation matrix} of $A$ is a unitary element $u \in \rM_n(A)$ such that ${\Delta_A(u_{ij}) = \sum_k u_{ik} \ot u_{kj}}$ for all $1 \leq i,j \leq n$.

The concept of CQGs evolved from \emph{compact matrix quantum group} (CMQG), \cite{woronowicz87,woronowicz91}.  A compact matrix quantum group is a unital \Cstar-algebra $A$ with an element $u \in \rM_n(A)$ such that
\begin{itemize}
\item $A$ is generated by the entries of $u$,
\item there is a *-homomorphism $\Delta: A \ra A \ot A$ such that $\Delta(u_{ij}) = \sum_k u_{ik} \ot u_{kj}$ for all $1 \leq i,j \leq n$,
\item $u$ and its transpose $u^t$ are invertible.
\end{itemize}
Every CMQG gives rise to a CQG, but the former contains more information -- the choice of $u$.  The matrix $u$ is called the \emph{fundamental corepresentation} of $(A, u)$ and it is a corepresentation matrix of $(A, \Delta)$.  A morphism between CMQGs $A$ and $B$ is a morphism of the underlying CQGs such that $(\phi \ot \id)(u_A)$ is conjugate by a matrix in $\GL{n}{\CC}$ with $u_B$.  If $A$ and $B$ are CMQGs and there is a bijective morphism of CMQGs between them, we say that they are \emph{similar}.  We say that two CMQG are \emph{isomorphic} if they are isomorphic as CQGs.

\subsection{Tannaka-Krein duality, easy quantum groups and categories of partitions}
\label{sec:easy-quantum-groups-partition}

\subsubsection{Woronowicz' Tannaka-Krein duality}
\label{sec:tannaka-krein-duality}

If $(A, \Delta)$ is a CQG and $u \in \bo(H) \ot A$ for some Hilbert space $H$, then $u$ is a \emph{unitary corepresentation} of $A$ if $u$ is a unitary and $(\id \ot \Delta)(u) = u_{12}u_{13}$.  We used the leg notation: ${u_{12} = u \ot 1}$, ${u_{13} = (\id \ot \Sigma)(u \ot 1)}$, where $\Sigma$ is the flip on $A \ot A$.  A morphism between unitary corepresentations $u \in \bo(H) \ot A$ and $v \in \bo(K) \ot A$ is a bounded linear operator $T \in \bo(H, K)$ such that $(T \ot 1) \circ u = v \circ (T \ot 1)$.  A morphism between two unitary corepresentations is also called an \emph{intertwiner}.  The space of intertwiners between two unitary corepresentations $u \in A \ot \bo(H)$ and $v \in A \ot \bo(K)$ is denoted by $\Hom(u, v)$.
With this structure, the finite dimensional unitary corepresentations of a CQG $(A, \Delta)$ form the  \emph{concrete \Cstar-category} $\Corepfin(A)$, i.e. a \Cstar-category with a faithful \Cstar-functor $\Corepfin \ra \FdHilb$ to the category of finite dimensional Hilbert spaces (see \cite{woronowicz88} for details).  The \emph{tensor product} of two corepresentation $u \in \bo(H) \ot A$ and $v \in \bo(K) \ot A$ is defined by $u \ot v = u_{13}v_{23}$.  This tensor product induces the structure of a \emph{concrete complete compact tensor \Cstar-category} in the sense of Woronowicz on $\Corepfin(A)$ (see \cite{woronowicz87,woronowicz88} or \cite[Chapter 5]{timmermann08}).

The fundamental corepresentation of a CMQG is a generator of its category of finite dimensional corepresentations.
\begin{theorem}[See Proposition 6.1.6 of \cite{timmermann08}]
\label{thm:fundamental-corepresentation-generates}
  If $v$ is a unitary corepresentation of a compact matrix quantum group $(A, u)$, then there is $k \in \NN$ such that $v$ is a subobject of $u^{\ot k}$.
\end{theorem}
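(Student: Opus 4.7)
The plan is to invoke the Peter--Weyl-type decomposition theory for compact quantum groups: the dense Hopf $*$-algebra $\mathcal{A} \subseteq A$ spanned by matrix coefficients of finite-dimensional unitary corepresentations decomposes as a direct sum indexed by the equivalence classes of irreducible corepresentations, and two irreducibles have orthogonal coefficient spaces with respect to the Haar state. Via this dictionary, showing that every finite-dimensional $v$ is a subobject of some $u^{\otimes k}$ is equivalent to showing that the coefficient spaces of tensor powers of $u$ (and its conjugate) exhaust the coefficient spaces of all irreducibles.

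First I would reduce to the case where $v$ is irreducible, since unitary corepresentations of compact quantum groups split as finite direct sums of irreducibles. Next I would use the standard computation that, for corepresentations $w_1 \in \bo(H_1)\ot A$ and $w_2 \in \bo(H_2)\ot A$, the coefficient space of $w_1 \otimes w_2$ is the linear span of pointwise products of coefficients of $w_1$ and $w_2$; in particular the coefficient space of $u^{\otimes k}$ is the linear span of products $u_{i_1 j_1} \cdots u_{i_k j_k}$. By the defining property of a CMQG, $A$ is generated as a $*$-algebra by the entries of $u$, so the coefficient space of any irreducible is contained in the $*$-algebra generated by these entries; for the orthogonal-type $u$ relevant here (where $u = \bar u$, since the paper works inside $O_n^+$) this means it is contained in a finite sum of coefficient spaces of the $u^{\otimes k}$. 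Peter--Weyl orthogonality then forces the irreducible to be equivalent to a subobject of one such tensor power.

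The main step to be careful about is the transition from ``every coefficient of $v$ can be written as a polynomial in the $u_{ij}$'' to ``$v$ is a subobject of $u^{\otimes k}$ for a \emph{single} $k$.'' This requires first concluding from Peter--Weyl orthogonality that $v$ embeds in a finite direct sum $\bigoplus_{k \in F} u^{\otimes k}$, and then absorbing this into a single tensor power by appealing to the fact that the trivial corepresentation is contained in $u \otimes \bar u$ (hence in $u^{\otimes 2}$ when $u = \bar u$), so that $u^{\otimes k}$ is a subobject of $u^{\otimes k+2}$. Iterating shifts everything into a single high enough tensor power.

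The hard part will be handling the invertibility of $u^t$ cleanly, which is the hypothesis that guarantees $\bar u$ is itself a corepresentation: in the fully general CMQG setting one has to allow mixed tensor words in $u$ and $\bar u$, and the statement as phrased is really a statement tailored to the orthogonal situation (where $u_{ij}$ are self-adjoint) that the rest of the paper focuses on. Once one records that $u = \bar u$ in this setting, the argument above goes through. A reader who wants the general CMQG version can consult Proposition~6.1.6 of \cite{timmermann08}, where the same strategy is carried out allowing tensor words in both $u$ and $\bar u$.
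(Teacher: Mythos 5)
The paper itself offers no proof of this statement; it is imported verbatim from Proposition 6.1.6 of \cite{timmermann08}, so there is no internal argument to measure you against. Your Peter--Weyl strategy is the standard proof of that background fact, and its core is sound: the coefficient space of $u^{\ot k}$ is the span of the products $u_{i_1j_1}\dotsm u_{i_kj_k}$; the $*$-algebra generated by the entries of $u$ is dense in $A$; and the Haar-state projection onto the (finite-dimensional) coefficient space of a fixed irreducible is continuous and annihilates the coefficient spaces of all inequivalent irreducibles, so that coefficient space must already lie in $\sum_k C_{u^{\ot k}}$, forcing the irreducible to be equivalent to a subobject of a single $u^{\ot k}$. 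Your caveat that one must either assume $u=\bar u$ or allow mixed tensor words in $u$ and $\bar u$ is also exactly right.

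The genuine gap is in your final absorption step. From ``$v$ embeds in $\bigoplus_{k\in F}u^{\ot k}$ for a finite set $F$'' you pass to a single $k$ by using $1\subset u\ot\bar u=u^{\ot 2}$, hence $u^{\ot k}\subset u^{\ot (k+2)}$. This shifts the exponent by two and therefore never changes its parity: it merges the summands only if all elements of $F$ have the same parity, or if additionally $1\subset u^{\ot k}$ for some odd $k$. For $O_n^+$, and for every hyperoctahedral easy quantum group in this paper (whose categories contain only partitions of even length), one has $\Hom(1,u^{\ot k})=0$ for all odd $k$, so for instance $v=u\oplus u^{\ot 2}$ is a subobject of no single $u^{\ot k}$. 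Thus the statement as literally phrased, for an arbitrary finite-dimensional unitary $v$, is false, and no argument can close that step; the correct statement --- and the only one the paper actually needs when combining this with Theorem \ref{thm:tannaka-krein-duality} --- is for irreducible $v$ (equivalently: every $v$ embeds in a finite direct sum of tensor powers of $u$), and that version your argument does establish. You should either restrict the statement to irreducible $v$ or drop the absorption step and record the direct-sum formulation.
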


Woronowicz proved the following version of Tannaka-Krein duality.
\begin{theorem}[See \cite{woronowicz88}]
\label{thm:tannaka-krein-duality}
 Every concrete complete compact tensor \Cstar-category is the category of corepresentations of a compact quantum group.  Two compact quantum groups $A$ and $B$ are isomorphic if and only if their categories of corepresentations are equivalent over $\FdHilb$.  If $A$ and $B$ are compact matrix quantum groups, they are similar if and only if their categories of corepresentations are equivalent over $\FdHilb$ by a functor preserving the isomorphism class of the fundamental corepresentation.
\end{theorem}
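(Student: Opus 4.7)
The plan is to reconstruct the compact quantum group directly from its category of corepresentations, following Woronowicz's Tannaka-Krein philosophy. Given a concrete complete compact tensor \Cstar-category $\catC$ with faithful \Cstar-functor $F: \catC \ra \FdHilb$, I would first build a Hopf $*$-algebra $\cA_0$ of matrix coefficients as follows: for every object $v \in \catC$ choose an orthonormal basis of $F(v)$, introduce abstract symbols $v_{ij}$, and identify elements across objects according to every intertwiner coming from $\catC$. More precisely, for each $T \in \Hom_\catC(v, w)$ one imposes the relation $\sum_k F(T)_{ik} v_{kj} = \sum_k w_{ik} F(T)_{kj}$. The existence of conjugate objects (the \textquotedblleft compact\textquotedblright\ part of the category axiom) forces each matrix $(v_{ij})$ to admit a unitary realization and supplies the antipode.

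Next I would equip $\cA_0$ with comultiplication $\Delta(v_{ij}) = \sum_k v_{ik} \ot v_{kj}$, which is well-defined by compatibility of intertwiners with the tensor product, and with counit $\epsilon(v_{ij}) = \delta_{ij}$. This makes $\cA_0$ a Hopf $*$-algebra. The \Cstar-completion $A$ is then obtained via a universal $C^*$-norm: because every generating matrix $(v_{ij})$ is unitary in $\rM_{n_v}(\cA_0)$, all $*$-representations are automatically bounded, and one checks that the supremum over $*$-representations gives an honest $C^*$-seminorm, in fact a norm (this uses a GNS construction from a Haar state built categorically from the duals). The pair $(A, \Delta)$ is then verified to satisfy bisimplifiability, yielding the desired CQG.

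The core step is showing $\Corepfin(A) \simeq \catC$ over $\FdHilb$. One direction is automatic: every object $v \in \catC$ produces a unitary corepresentation $u^v := (v_{ij}) \in \rM_{n_v}(A)$, and the assignment $v \mapsto u^v$ is a faithful tensor \Cstar-functor by construction. For essential surjectivity one uses a Peter-Weyl style argument: the matrix coefficients of the $u^v$ span a dense Hopf $*$-subalgebra of $A$, and any finite dimensional unitary corepresentation of $A$ is recovered from its matrix coefficients as a subobject of some $u^v$. I expect this exhaustiveness statement, together with the well-definedness and $C^*$-property of the universal norm, to be the main technical obstacle; it is where positivity arguments and the categorical construction of the Haar functional must be combined carefully.

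For the second assertion, given an equivalence $\Corepfin(A) \simeq \Corepfin(B)$ over $\FdHilb$, the matching of objects and intertwiners transports matrix coefficients of $A$ to matrix coefficients of $B$, inducing a $*$-isomorphism of the dense Hopf $*$-subalgebras that extends by universality to a CQG isomorphism $A \cong B$. For the CMQG case, Theorem \ref{thm:fundamental-corepresentation-generates} guarantees that every corepresentation is a subobject of some $u^{\ot k}$, so an equivalence of categories preserving the isomorphism class of the fundamental corepresentation is determined on a generating set and hence, by the Hopf algebra structure, everywhere, yielding a similarity of CMQGs.
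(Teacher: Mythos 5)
The paper does not actually prove this theorem; it only quotes it from Woronowicz \cite{woronowicz88}. Your sketch reconstructs precisely the standard Tannaka--Krein argument from that reference --- the matrix-coefficient Hopf $*$-algebra with relations imposed by intertwiners, conjugate objects supplying unitarity and the antipode, a categorically constructed Haar state yielding the \Cstar-completion, and a Peter--Weyl exhaustiveness argument for the equivalence $\Corepfin(A)\simeq\catC$ over $\FdHilb$ --- so it is consistent with the cited proof, with the usual implicit convention that one works with universal (or equivalently normalized) \Cstar-completions so that the corepresentation category determines the quantum group up to isomorphism.
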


\subsubsection{Categories of partitions}
\label{sec:categories-of-partitions}

In order to describe corepresentation categories of quantum groups combinatorially, Banica and Speicher introduced the notions of a \emph{category of partitions} and of \emph{easy quantum groups} \cite{banicaspeicher09}.  A \emph{partition} $p$ is given by $k$ upper points and $l$ lower points which may be connected by lines. By this, the set of $k+l$ points is partitioned into several \emph{blocks}. We write a partition as a diagram in the following way:
\setlength{\unitlength}{0.5cm}
\begin{center}
  \begin{picture}(14,3)
    \put(0,0){$\cdot$}
    \put(1,0){$\cdot$}
    \put(2,0){$\cdot$}
    \put(3,0){$\cdot$}
    \put(4.5,0){$\dotsc$}
    \put(6,0){$\cdot$}

    \put(3,1.5){$p$}

    \put(0,3){$\cdot$}
    \put(1,3){$\cdot$}
    \put(2,3){$\cdot$}
    \put(3,3){$\cdot$}
    \put(4.5,3){$\dotsc$}
    \put(6,3){$\cdot$}
    
    \put(8,1.5){\begin{minipage}{3.5cm} $k$ upper points and \\ $l$ lower points. \end{minipage}}
  \end{picture}
\end{center}

Two examples of such partitions are the following diagrams.
\setlength{\unitlength}{0.5cm}
\begin{center}
  \begin{picture}(6,3)
    \put(0,0){\line(0,1){2}}
    \put(1,0){\line(0,1){2}}
    \put(0,1){\line(1,0){1}}

    \put(4,0){\line(0,1){0.5}}
    \put(5,0){\line(0,1){1}}
    \put(4,2){\line(0,-1){1}}
    \put(5,2){\line(0,-1){0.5}}
    \put(4,1){\line(1,0){1}}
    
    \put(5.2,0){.}
  \end{picture}
\end{center}
In the first example, all four points are connected, and the partition consists only of one block.  In the second example, the left upper point and the right lower point are connected, whereas neither of the two remaining points is connected to any other point.

The set of partitions on $k$ upper and $l$ lower points is denoted by $P(k,l)$, and the set of all partitions is denoted by $P$.
A partition $p \in P(k,l)$ is called \emph{noncrossing}, if it can be drawn in such a way that none of its lines cross.

A few partitions play a special role in this article, and they are listed here:
\begin{itemize}
 \item The \emph{singleton partition} $\singleton$ is the partition in $P(0,1)$ on a single lower point.
 \item The \emph{double singleton partition} $\singleton\otimes\singleton$ is the partition in $P(0,2)$ on two non-connected lower points.
 \item The \emph{pair partition} (also called \emph{duality partition}) $\paarpart$ is the partition in $P(0,2)$ on two connected lower points.
 \item The \emph{unit partition} (also called \emph{identity partition}) $\idpart$ is the partition in $P(1,1)$ connecting one upper with one lower point.
 \item The \emph{four block partition} $\vierpart$ is the partition in $P(0,4)$ connecting four lower points.
 \item The \emph{s-mixing partition} $h_s$ is the partition in $P(0,2s)$ for $s\in \N$ given by two blocks connecting the $2s$ points in an alternating way:
\setlength{\unitlength}{0.5cm}
\begin{center}
\begin{picture}(13,4)
\put(-1,1.5){$h_s\;=$}
\put(-0.1,1){\uppartiii{2}{1}{3}{5}}
\put(5,3){\line(1,0){1}}
\put(6.75,2.95){$\ldots$}
\put(-0.1,1){\uppartiii{1}{2}{4}{6}}
\put(6,2){\line(1,0){1}}
\put(7.75,1.95){$\ldots$}
\put(-0.1,1){\uppartii{1}{10}{12}}
\put(8.65,3){\line(1,0){0.5}}
\put(-0.1,1){\uppartii{2}{9}{11}}
\put(9.5,2){\line(1,0){1}}
\put(12.25,1){.}
\end{picture}
\end{center}
 \item The \emph{crossing partition} (also called \emph{symmetry partition}) $\crosspart$ is the partition in $P(2,2)$ connecting the upper left with the lower right point, as well as the upper right point with the lower left one. It is the partition of two crossing pair partitions.
 \item The \emph{half-liberating partition} $\halflibpart$ is the partition in $P(3,3)$ given by the blocks  $\{1,3'\}$, $\{2,2'\}$ and $\{3,1'\}$ connecting three upper points $1,2,3$ and three lower points $1',2',3'$ such that $1$ and $3'$ are connected, $2$ and $2'$, and finally $3$ and $1'$.
\end{itemize}
Further partitions will be introduced in Section \ref{sec:genericity-of-primariy-partition}.

We will also use \emph{labelled partitions}, i.e. partitions whose points are either labelled by numbers or by letters. The labelling of a partition $p\in P(k,l)$ with letters is usually proceeded by starting at the very left of the $k$ upper points of $p$ and then going clockwise, ending at the very left of the $l$ lower points. The labelling with numbers typically labels both the upper and the lower row of points from left to right.

There are the natural operations \emph{tensor product} ($p\otimes q$), \emph{composition} ($pq$), \emph{involution} ($p^*$) and \emph{rotation} on partitions (see \cite[Definition 1.8]{banicaspeicher09} or \cite[Definition 1.4]{weber12}).  A collection $\cC$ of subsets $D(k,l) \subset P(k,l)$, $k,l \in \NN$ is called a \emph{category of partitions} if it is closed under these operations and if it contains the pair partition $\paarpart$, and the unit partition $\idpart$ (see \cite[Definition 6.1]{banicaspeicher09} or \cite[Definition 1.4]{weber12}).

A category of partitions $\cC$ is called \emph{hyperoctahedral} if the four block $\vierpart$ is in $\cC$, but the double singleton $\singleton \ot \singleton$ is not in $\cC$.  

Given a partition $p \in P(k,l)$ and two multi-indices $(i_1, \dotsc, i_k)$, $(j_1, \dotsc, j_l)$, we can label the diagram of $p$ with these numbers (now, the upper and the lower row both are labelled from left to right, respectively) and we put
\[\delta_p(i,j)
  =
  \begin{cases}
    1 & \text{if } p \text{ connects only equal indices,} \\
    0 & \text{if there is a string of } p \text{ connecting unequal indices} \eqstop
  \end{cases}\]
For every $n \in \NN$, there is a map $T_p: (\CC^n)^{\ot k} \ra (\CC^n)^{\ot l}$  associated with $p$, which is given by
\[T_p(e_{i_1} \ot \dotsm \ot e_{i_k}) = \sum_{1 \leq j_1, \dotsc, j_l \leq n} \delta_p(i, j) \cdot e_{j_1} \ot \dotsm \ot e_{j_l} \eqstop \]

\begin{definition}[Definition 6.1 of \cite{banicaspeicher09} or Definition 2.1 of \cite{banicacurranspeicher09_2}]
  A compact matrix quantum group $(A,u)$ is called \emph{easy}, if there is a category of partitions $\cC$ given by $D(k,l) \subset P(k,l)$, for all $k,l \in \NN$ such that
  \[\Hom(u^{\ot k}, u^{\ot l}) = \lspan \{T_p \amid p \in D(k,l) \} \eqstop \]
\end{definition}

Combining Theorems \ref{thm:fundamental-corepresentation-generates} and \ref{thm:tannaka-krein-duality}, we obtain the following theorem, which is the basis of all combinatorial investigation on easy quantum groups.
\begin{theorem}[See \cite{banicaspeicher09}]
\label{thm:classification-easy-quantum-groups-by-partitions}
  There is a bijection between categories of partitions and easy quantum groups up to similarity.
\end{theorem}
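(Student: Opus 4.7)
The strategy is to package the combinatorial data of a category of partitions into the axioms of a concrete complete compact tensor $\Cstar$-category, then feed this through Woronowicz's Tannaka--Krein theorem, and conversely recover the category from an easy CMQG via its intertwiner spaces. The proof is essentially an unpacking of Theorems \ref{thm:fundamental-corepresentation-generates} and \ref{thm:tannaka-krein-duality}, with the main content being that the dictionary $p \mapsto T_p$ translates partition operations into categorical operations.

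\textbf{Existence.} Fix $n$ (which determines the dimension of the fundamental corepresentation) and a category of partitions $\cC$. I would first check that the assignment
\[
(k,l) \;\longmapsto\; \lspan\{T_p \amid p \in \cC(k,l)\} \;\subset\; \Hom\bigl((\CC^n)^{\ot k},(\CC^n)^{\ot l}\bigr)
\]
defines a concrete complete compact tensor $\Cstar$-category $\catC_{\cC}$ inside $\FdHilb$ whose generating object is $\CC^n$. Concretely: composition $pq$ of partitions corresponds (up to a power of $n$ for closed loops) to composition $T_p T_q$ of linear maps; tensor product $p \ot q$ corresponds to $T_p \ot T_q$; involution $p^*$ corresponds to taking adjoints; and rotation encodes Frobenius reciprocity / the duality morphisms coming from $T_{\paarpart}$. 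The unit partition $\idpart \in \cC$ ensures that the identity on $\CC^n$ belongs to the morphism spaces, and $\paarpart \in \cC$ provides the duality (conjugate object) structure making the category rigid. Closure under direct summands and subobjects is enforced by passing to the idempotent completion, which does not change the underlying CMQG. By Theorem \ref{thm:tannaka-krein-duality}, there exists a CQG $(A,\Delta)$ with $\Corepfin(A) \simeq \catC_{\cC}$, and the distinguished generator $\CC^n$ produces a fundamental corepresentation $u \in \Cmat{n} \ot A$, so $(A,u)$ is a CMQG. By construction, $\Hom(u^{\ot k},u^{\ot l}) = \lspan\{T_p \mid p \in \cC(k,l)\}$, i.e.\ $(A,u)$ is easy with category $\cC$.

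\textbf{Recovering the category and well-definedness.} Conversely, given an easy CMQG $(A,u)$, the family $\cC(k,l) := \{p \in P(k,l) \mid T_p \in \Hom(u^{\ot k}, u^{\ot l})\}$ is a category of partitions: the four closure properties reduce to the compatibility of $p \mapsto T_p$ with composition, tensor product, involution and rotation in the representation category, together with $T_\idpart = \id$ and $T_\paarpart$ being an intertwiner of $u^{\ot 0}$ and $u^{\ot 2}$ by definition of easiness. So both maps are well-defined, and they are mutually inverse on the nose at the level of intertwiner spaces.

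\textbf{Injectivity and bijectivity up to similarity.} If two easy CMQGs $(A,u_A)$ and $(B,u_B)$ share the same category of partitions $\cC$, then their intertwiner spaces between all tensor powers of the fundamental corepresentations coincide. By Theorem \ref{thm:fundamental-corepresentation-generates} these tensor powers generate $\Corepfin(A)$ and $\Corepfin(B)$, so the two representation categories are equivalent via a functor fixing the fundamental object; by the second part of Theorem \ref{thm:tannaka-krein-duality}, $A$ and $B$ are similar. Conversely, similar CMQGs have the same intertwiner spaces and thus the same category of partitions. This yields the claimed bijection.

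\textbf{Main obstacle.} The delicate point is that the correspondence $\cC \leftrightarrow \{\lspan T_p\}$ at the level of \emph{partitions} (rather than of spanned intertwiner spaces) is injective: one must know that for fixed $n$ the maps $T_p$ associated to distinct partitions are linearly independent, so that $\cC$ can be read off from the intertwiner spaces. This requires $n$ to be large enough relative to the partitions appearing, and is the standard Gram determinant / linear independence input one has to verify; once it is in place, the above argument closes. The rest consists of routine but careful bookkeeping of how each partition operation corresponds to an operation in the tensor $\Cstar$-category.
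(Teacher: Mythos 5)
Your proposal is correct and follows essentially the same route the paper takes: the paper offers no independent proof but simply combines Theorem \ref{thm:fundamental-corepresentation-generates} with Woronowicz's Tannaka--Krein duality (Theorem \ref{thm:tannaka-krein-duality}) and defers to \cite{banicaspeicher09}, which is exactly the argument you unpack. You even flag explicitly the linear-independence of the maps $T_p$ (needed to read $\cC$ off the intertwiner spaces, valid only for $n$ large enough), a subtlety the paper leaves implicit.
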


Thus, easy quantum groups are completely determined by their categories of partitions.

\subsection{Quantum isometry groups}
\label{sec:introduction-qiso-groups}

Given a discrete group $G$ with finite generating set $S \subset G$ and associated word-length function $l: G \ra \NN$, $l(g) = \min \{ n \in \NN \amid \exists s_1, \dotsc, s_n \in S: g = s_1 \dotsm s_n\}$,  we obtain a quantum isometry group of $\Cstarmax(G)$ along the lines of \cite{banicaskalski11}.  We denote by $u_g$ the canonical unitary of $\Cstarmax(G)$ associated with $g \in G$.

\begin{definition}[Definitions 2.5  and Section 4 of \cite{banicaskalski11}]
  Let $(A, u = (u_{st})_{s,t \in S})$ be a compact matrix quantum group and write $\mathrm{Pol}(A)$ for its polynomial algebra $*-\mathrm{alg}(u_{st} \amid s,t \in S)$.  An action $\alpha: \Cstarmax(G) \ra \Cstarmax(G) \ot A$ on $\Cstarmax(G)$ is faithful and isometric with respect to $l$, if
  \begin{itemize}
    \item  $\alpha(L_n) \subset L_n \ot \mathrm{Pol}(A)$ for all $n \in \NN$, where $L_n = \lspan\{u_g \amid l(g) = n\}$ and
    \item $\alpha(u_s) = \sum_{t \in S} u_t \ot u_{ts}$ for all $s \in S$.
  \end{itemize}

\end{definition}

\begin{theorem}[Theorems 2.7 and 4.5 of \cite{banicaskalski11}]
  There is a maximal compact matrix quantum group $(A,u)$ acting faithfully and isometrically with respect to $l$ on $\Cstarmax(G)$.  That is, for any other compact matrix quantum group $(B, v)$ acting faithfully and isometrically with respect to $l$ on $\Cstarmax(G)$ there is a unique morphism of CMQGs $\phi:(A, u) \ra (B,v)$ such that $\phi(u) = v$.
\end{theorem}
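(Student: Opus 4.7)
The plan is the standard ``universal construction'' strategy used throughout the quantum isometry group literature. I would define $A$ to be the universal unital C*-algebra generated by $\{u_{st}\}_{s,t \in S}$ subject to three families of relations. First, the matrix $U = (u_{st})_{s,t \in S}$ is biunitary in $\mathrm{M}_{|S|}(A)$ (both $U$ and $U^t$ unitary), together with an involutive compatibility relating $u_{st}$ and $u_{s^{-1}t^{-1}}^*$ that makes contact with the involution on $\Cstarmax(G)$. Second, for every pair of words $s_1\cdots s_n = s'_1\cdots s'_m$ in $G$, the coefficients appearing when one formally expands $\alpha_0(u_{s_1})\cdots\alpha_0(u_{s_n})$ and $\alpha_0(u_{s'_1})\cdots\alpha_0(u_{s'_m})$ via $\alpha_0(u_s) = \sum_{t \in S} u_t \otimes u_{ts}$ must agree in $A$; this is what forces $\alpha_0$ to extend to a well-defined unital *-homomorphism $\alpha:\Cstarmax(G)\to\Cstarmax(G)\otimes A$. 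Third, for every $n\geq 1$, every $(s_1,\ldots,s_n)\in S^n$ with $l(s_1\cdots s_n) = n$, and every $g\in G$ with $l(g)<n$, the isometry relation
\[
\sum_{\substack{(t_1,\ldots,t_n)\in S^n\\ t_1 t_2 \cdots t_n = g}} u_{t_1 s_1}\,u_{t_2 s_2}\,\cdots\, u_{t_n s_n} \;=\; 0
\]
holds; this precisely encodes $\alpha(L_n)\subset L_n\otimes\mathrm{Pol}(A)$. The algebra $A$ is non-trivial because the scalar choice $u_{st}=\delta_{s,t}$ satisfies all these relations.

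Next I would equip $A$ with a comultiplication on generators by $\Delta(u_{st}) = \sum_{r\in S} u_{sr}\otimes u_{rt}$. The cleanest proof of well-definedness is structural rather than computational: the iterate $(\mathrm{id}\otimes\alpha)\circ\alpha$ is again a faithful isometric action on $\Cstarmax(G)$ (by the full structural compatibility of its two factors), whose matrix of coefficients in $A\otimes A$ is exactly $w_{st} = \sum_r u_{sr}\otimes u_{rt}$. Hence the entries $w_{st}$ automatically satisfy all three families of defining relations of $A$, and the universal property of $A$ produces the desired *-homomorphism $\Delta$. Coassociativity then holds by direct inspection on generators. Biunitarity of $U$ yields the cancellation identities $\sum_r u_{sr}u_{tr}^* = \delta_{st}$ and $\sum_r u_{rs}^*u_{rt} = \delta_{st}$, from which bisimplifiability of $(A,\Delta)$ follows in the standard Woronowicz way; so $(A,u)$ is a CMQG, and $\alpha$ is by construction a faithful isometric action.

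The universal property is then almost tautological: for any CMQG $(B,v)$ with a faithful isometric action, the entries $v_{st}$ satisfy all three families of defining relations of $A$ --- the first two because $(B,v)$ genuinely acts on $\Cstarmax(G)$, the third because the action is isometric --- so universality produces a unique unital *-homomorphism $\phi:A\to B$ with $\phi(u_{st}) = v_{st}$, and comparison of the formulas for $\Delta_A$ and $\Delta_B$ on generators shows that $\phi$ is a morphism of CMQGs. Uniqueness is automatic since $\phi$ is pinned down on the generators. The main obstacle I anticipate is the well-definedness of $\Delta$: a direct combinatorial check that the matrix $w = u \odot u$ satisfies each of the countably many $n$-fold isometry relations is tedious and not very illuminating, which is why invoking the universal property of $A$ a second time --- via the observation that the composition of two faithful isometric actions is again a faithful isometric action --- is the key move that makes the proof run smoothly.
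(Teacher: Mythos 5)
The paper does not prove this statement: it is imported verbatim from Banica--Skalski (Theorems 2.7 and 4.5 of \cite{banicaskalski11}), so there is no in-paper argument to compare against. Your proposal reconstructs the standard proof from that reference correctly -- define $A$ by universal relations encoding biunitarity, well-definedness of $\alpha$ on $\Cstarmax(G)$, and preservation of the length filtration; obtain $\Delta$ by feeding the iterated action back into the universal property; read off the maximality. Two small points should be tightened. First, the iterate you want is $(\alpha \ot \id_A)\circ\alpha$, not $(\id\ot\alpha)\circ\alpha$ (the latter does not typecheck), and its coefficient matrix is $w_{st}=\sum_r u_{sr}\ot u_{rt}$ as you say. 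Second, in the maximality step you assert that the entries $v_{st}$ of an arbitrary acting CMQG $(B,v)$ satisfy the biunitarity relations of $A$ ``because $(B,v)$ genuinely acts''; this is not immediate from the definition, since a CMQG only requires $v$ and $v^t$ to be invertible. It does follow, but via an extra observation: the filtration condition $\alpha(L_n)\subset L_n\ot\mathrm{Pol}(B)$ forces $\alpha$ to preserve the Haar trace of $\Cstarmax(G)$, for which the $u_s$, $s\in S$, are orthonormal, whence $\sum_t v_{ts}^*v_{ts'}=\delta_{ss'}$; combined with invertibility (and the compatibility $v_{ts}^*=v_{t^{-1}s^{-1}}$ coming from $\alpha$ being a $*$-map) this yields unitarity of $v$ and $\bar v$. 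With that supplement the argument is complete and is, for all practical purposes, the proof given in the cited source.
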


\subsection{Varieties of groups}
\label{sec:introduction-to-varieties}

In this section we briefly explain the concepts of varieties of groups.  We advice the interested reader to consult \cite{neumann67} for a thorough introduction.

Consider $\freegrp{\infty}$ with free basis $x_1, x_2, \dotsc $ and let $w \in \freegrp{\infty}$ be a word in the letters $x_1, x_2, \dotsc x_n$.  We say that the \emph{identical relation} $w$ holds in a group $G$ if for any choice of elements $g_1,g_2, \dotsc g_n \in G$, replacing $x_i$ by $g_i$, we have $w(g_1, \dotsc, g_n) = 1_G$.  Following \cite{neumann67} a \emph{variety of groups} $\cV$ is a class of groups for which there is a set of words $R \subset \freegrp{\infty}$ such that every group $G$ in $\cV$ satisfies the identical relations in $R$.

Let us give some examples of varieties of groups.

\begin{example}
\label{ex:varieties}
The following classes of groups are varieties of groups.  We also describe the identical relations that characterise them.
  \begin{enumerate}
  \item The class of all groups is the variety of groups, where no law is satisfied.
  \item The class of abelian groups is defined by the commutator $[x,y] = xyx^{-1}y^{-1}$.
  \item The class of groups with a fixed exponent $s$ is given by $x^s$.
  \item The class of nilpotent groups of class $2$ is described by $[[x,y],z]$.
  \end{enumerate}
\end{example}

Varieties of groups are important for this work, because they correspond precisely to the \emph{fully characteristic subgroups} of $\freegrp{\infty}$.  Given an inclusion of groups $H \leq G$, $H$ is fully characteristic in $G$, if it is invariant under all endomorphisms of $G$.  This means that $\phi(H) \subset H$ for every endomorphism $\phi \in \End(G)$.

 The set of identical relations that hold in a given group, form a subgroup of $\freegrp{\infty}$.  This observation is the trigger to prove the following theorem.

\begin{theorem}[See \cite{neumann37} or Theorem 14.31 in \cite{neumann67}]
\label{thm:varieties-and-fully-characteristic-subgroups}
  There is a lattice anti-isomorphism between varieties of groups and fully characteristic subgroups of $\freegrp{\infty}$ sending a variety of groups to the set of all identical relations that hold in it.
\end{theorem}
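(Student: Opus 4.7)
The plan is to define explicit mutually inverse maps between the two posets and then verify that both reverse inclusion. For a variety $\mathcal{V}$, set
\[
\Phi(\mathcal{V}) = \{w \in \freegrp{\infty} \amid w \text{ holds identically in every } G \in \mathcal{V}\},
\]
and for a subgroup $H \leq \freegrp{\infty}$, set
\[
\Psi(H) = \{G \amid w(g_1, \dotsc, g_n) = 1_G \text{ for all } w \in H \text{ and all } g_1, \dotsc, g_n \in G\}.
\]
By definition, $\Psi(H)$ is a variety for any $H$. The less trivial well-definedness is that $\Phi(\mathcal{V})$ is a fully characteristic subgroup of $\freegrp{\infty}$: closure under product and inverse is automatic from evaluation in a group, and the crucial observation is that every endomorphism $\phi \in \End(\freegrp{\infty})$ is determined by a substitution $x_i \mapsto \phi(x_i)$.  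Hence for any substitution $\psi \colon x_i \mapsto g_i$ into a group $G \in \mathcal{V}$, the composition $\psi \circ \phi$ is again such a substitution, so $\psi(\phi(w)) = (\psi \circ \phi)(w) = 1_G$ whenever $w \in \Phi(\mathcal{V})$, which proves $\phi(w) \in \Phi(\mathcal{V})$.

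The core step is to show $\Phi \circ \Psi = \id$ and $\Psi \circ \Phi = \id$.  The inclusions $\Psi(\Phi(\mathcal{V})) \supseteq \mathcal{V}$ and $\Phi(\Psi(H)) \supseteq H$ are immediate from the definitions.  For $\Psi(\Phi(\mathcal{V})) \subseteq \mathcal{V}$, one uses that $\mathcal{V}$ is by definition the class of groups satisfying some set of relations $R \subseteq \freegrp{\infty}$, and clearly $R \subseteq \Phi(\mathcal{V})$; any group satisfying all relations in $\Phi(\mathcal{V})$ therefore already lies in $\mathcal{V}$.  The central inclusion is $\Phi(\Psi(H)) \subseteq H$: given $w \notin H$, one must exhibit a group in $\Psi(H)$ in which $w$ does not hold identically.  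The natural witness is the relatively free group $F := \freegrp{\infty}/H$, and full characteristicity of $H$ is used precisely to verify $F \in \Psi(H)$.  Indeed, for $v \in H$ and any choice $\bar g_1, \dotsc, \bar g_n \in F$, lift to $g_i \in \freegrp{\infty}$ and observe that $v(\bar g_1, \dotsc, \bar g_n)$ is the image in $F$ of $\phi(v)$, where $\phi \in \End(\freegrp{\infty})$ sends $x_i \mapsto g_i$; since $\phi(v) \in H$, the image is trivial.  Finally, substituting the generators $\bar x_i$ for the $x_i$ in $F$ gives $w(\bar x_1, \dotsc, \bar x_n) = \bar w \neq 1_F$, so $w \notin \Phi(F) \supseteq \Phi(\Psi(H))$.

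Both $\Phi$ and $\Psi$ are order-reversing by inspection.  Since they are mutually inverse inclusion-reversing bijections between two complete lattices (fully characteristic subgroups are closed under arbitrary intersection, and an intersection of varieties is a variety), they automatically constitute a lattice anti-isomorphism, and the description of $\Phi$ matches the one in the statement.  The main technical hurdle is the twofold appearance of full characteristicity: first to see that $\Phi(\mathcal{V})$ is stable under $\End(\freegrp{\infty})$, and then to ensure that the relatively free group $\freegrp{\infty}/H$ actually lies in $\Psi(H)$.  This is precisely the property that pins down the bijection on the side of fully characteristic subgroups and would fail for arbitrary subgroups of $\freegrp{\infty}$.
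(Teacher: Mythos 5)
Your argument is correct. The paper itself offers no proof of this statement---it is quoted from Neumann's book---so there is nothing internal to compare against; what you give is the standard Birkhoff/Neumann argument, organised as a Galois connection $(\Phi,\Psi)$ whose closure operators are shown to be the identity on the two relevant posets. All the key points are in place: full characteristicity of $\Phi(\mathcal V)$ via precomposition of substitutions with endomorphisms, and the relatively free group $\freegrp{\infty}/H$ as the witness separating $w\notin H$ from $\Phi(\Psi(H))$, with full characteristicity of $H$ used exactly where it must be to put that quotient into $\Psi(H)$. The only step you leave tacit is that a fully characteristic subgroup is in particular normal (being invariant under inner automorphisms), so that $\freegrp{\infty}/H$ is actually a group; this is worth one clause. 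Note also that the paper's Definition of a variety literally only requires the groups in $\mathcal V$ to satisfy $R$, whereas your step $\Psi(\Phi(\mathcal V))\subseteq\mathcal V$ needs $\mathcal V$ to be the class of \emph{all} groups satisfying $R$; that is the intended (and standard) reading, so this is a defect of the paper's phrasing rather than of your proof.
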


We will make use of another observation concerning elements of free groups. Two sets of words in $\freegrp{n}$ are called \emph{equivalent}, if they generate the same fully characteristic subgroup.

\begin{theorem}[See Theorem 12.12 in \cite{neumann67}]
\label{thm:splitting-of-powers}
  Every word $w \in \freegrp{n}$, $n \in \NN \cup \{\infty\}$ is equivalent to a pair of words $a$ and $b$ in $\freegrp{n}$, where $a$ is of the form $x^m$ for some $m \geq 2$ and $x \in \freegrp{n}$, and $b$ is an element of the commutator subgroup $[\freegrp{n}, \freegrp{n}]$.
\end{theorem}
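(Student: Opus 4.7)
The plan is to decompose $w$ via the abelianization map and then verify the equivalence of sets of words using the variety/fully-characteristic correspondence from Theorem \ref{thm:varieties-and-fully-characteristic-subgroups}.

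For the decomposition, let $\pi: \freegrp{n} \to \freegrp{n}^{\mathrm{ab}} \cong \ZZ^n$ be the abelianization, with $\pi(w) = (k_1, \dotsc, k_n)$. When this vector is nonzero, set $m := \gcd(|k_1|, \dotsc, |k_n|)$ and write $k_i = m \ell_i$ so that $\gcd(\ell_1, \dotsc, \ell_n) = 1$. Define
\[ x := x_1^{\ell_1} \dotsm x_n^{\ell_n}, \qquad a := x^m, \qquad b := a^{-1} w. \]
By construction $\pi(a) = \pi(w)$, so $\pi(b) = 0$ and hence $b \in \ker \pi = [\freegrp{n}, \freegrp{n}]$. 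The degenerate case $\pi(w) = 0$ is handled separately with $b = w$. In both cases we have $w = a b$ with $a$ a proper power and $b$ a commutator. The inclusion $\langle\langle w \rangle\rangle_{fc} \subset \langle\langle a, b \rangle\rangle_{fc}$ is then immediate.

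The substantive direction is to show $a, b \in \langle\langle w \rangle\rangle_{fc}$. I would exploit Theorem \ref{thm:varieties-and-fully-characteristic-subgroups}: these inclusions are equivalent to the statement that whenever $w$ is a law in some group $G$ (i.e.\ $w(g_1, \dotsc, g_n) = 1$ for every substitution), both $a$ and $b$ are also laws in $G$. Assuming $w$ is a law in $G$, substituting into an abelian quotient $G \thra G^{\mathrm{ab}}$ kills $b$, so $w$ and $a$ induce the same law on $G^{\mathrm{ab}}$; hence $a$ is a law in $G^{\mathrm{ab}}$. One then promotes this to the statement that $a$ is a law in $G$ by combining several substitutions $x_i \mapsto g \cdot c_i$ into $w$ (with $c_i$ varying over suitable auxiliary elements) so that the commutator contributions from $b$ cancel pairwise, isolating the pure power relation $(g_1^{\ell_1} \dotsm g_n^{\ell_n})^m = 1$; the coprimality $\gcd(\ell_i) = 1$ is what makes this extraction possible. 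Once $a$ is known to be a law, $b = a^{-1} w$ is automatic.

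The main obstacle is the last combinatorial step: because $G$ is not abelian, the value $a(g_1, \dotsc, g_n)$ does not immediately reduce to a single power, so one cannot naively transfer the conclusion from $G^{\mathrm{ab}}$ back to $G$. The resolution is to use multiple endomorphic images of $w^{\pm 1}$ and arrange their product so that all commutator contributions telescope to $1$, leaving only the power part; this is the technical heart of Theorem 12.12 in \cite{neumann67}, and I would follow Neumann's argument there, using induction on the complexity of $w$ and the fact that the fully characteristic subgroup of $\ZZ^n$ generated by $(k_1, \dotsc, k_n)$ is exactly $m \ZZ^n$.
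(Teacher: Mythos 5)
The paper gives no proof of this statement at all---it is quoted directly from Neumann's book---so the only question is whether your argument stands on its own. Your decomposition $w=ab$ with $a=x^m$, $m=\gcd(|k_1|,\dotsc,|k_n|)$ and $b=a^{-1}w\in[\freegrp{n},\freegrp{n}]$ is the right one, and the easy inclusion is indeed immediate. The gap is precisely the step you defer to Neumann, and the mechanism you sketch for it would not work as described. Knowing that $a$ is a law in $G^{\mathrm{ab}}$ only gives $(g_1^{\ell_1}\dotsm g_n^{\ell_n})^m\in[G,G]$, which is strictly weaker than what you need; for a perfect group $G$ the abelianisation carries no information whatsoever, so no scheme for ``promoting'' a law from $G^{\mathrm{ab}}$ back to $G$ can succeed without returning to $w$ itself. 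Likewise, substituting $x_i\mapsto g\cdot c_i$ with general auxiliary elements $c_i$ produces values $w(gc_1,\dotsc,gc_n)$ whose ``commutator contributions'' have no reason to cancel pairwise upon multiplying several of them together.

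The missing step is, however, a one-liner that bypasses $G^{\mathrm{ab}}$ entirely: substitute every generator by a power of a \emph{single} element. If $w$ is a law in $G$ and $g\in G$, apply $x_i\mapsto g^{c_i}$; since all images lie in the abelian subgroup $\langle g\rangle$, the word collapses to $w(g^{c_1},\dotsc,g^{c_n})=g^{c_1k_1+\dotsb+c_nk_n}$, and by B\'ezout one chooses the $c_i$ with $\sum_i c_ik_i=m$. Hence $g^m=1$ for every $g\in G$, i.e.\ $x_1^m$ is a law in $G$; then $a=x^m$ is the substitution instance $x_1\mapsto x$ of $x_1^m$, so $a$ is a law, and $b=a^{-1}w$ follows because the set of laws of $G$ is a subgroup. (Equivalently: $x_1^m\in\langle\langle w\rangle\rangle_{\mathrm{fc}}$ via the endomorphism $x_i\mapsto x_1^{c_i}$, and $a\in\langle\langle x_1^m\rangle\rangle_{\mathrm{fc}}$ via $x_1\mapsto x$.) Two loose ends remain. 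When $\pi(w)=0$ you still need some $a=x^m$ with $m\geq 2$; take $x=w$ and $m=2$, so that $b=w^{-1}$. And when $\gcd(k_i)=1$ your construction yields $m=1$, which the statement forbids---in that case $\langle\langle w\rangle\rangle_{\mathrm{fc}}=\freegrp{n}$, whereas $\langle\langle x^m,b\rangle\rangle_{\mathrm{fc}}$ abelianises into $m\ZZ^n$ for any $m\geq 2$, so the statement as printed actually fails for $w=x_1$; Neumann's formulation, which allows $m\geq 0$, is the one that should be proved.
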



\section{Simplifiable hyperoctahedral categories}
\label{sec:simplifiable-categories}

\subsection{A short review of the classification of easy quantum groups}
\label{sec:review-classification}

Recall from Section \ref{sec:categories-of-partitions} that a category of partitions is called \emph{hyperoctahedral}, if it contains the four block partition $\vierpart$ but not the partition $\singleton\otimes\singleton$.  An easy quantum group $G$ is called hyperoctahedral, if its corresponding category of partitions is hyperoctahedral.  By \cite[Theorem 6.5]{banicacurranspeicher09_2} and \cite[Corollary 4.11]{weber12} we know that there are exactly 13 nonhyperoctahedral easy quantum groups, resp. 13 nonhyperoctahedral categories of partitions, so they are completely classified.  In this article we will shed some light on the classification of hyperoctahedral categories.  Let us first give a short review of the classification of easy quantum groups.

For partitions $p_1,\ldots,p_n\in P$, we write  $\cC=\langle p_1,\ldots,p_n\rangle$ for the category generated by these partitions, i.e. $\cC$ is the smallest subclass of $P$ which is closed under the category operations (see Section \ref{sec:categories-of-partitions}) and which contains the partitions $p_1,\ldots,p_n$. (Note that the pair partition $\paarpart$ and the unit partition $\idpart$ are always contained in a category as trivial base cases.)

By \cite[Theorem 3.16]{banicaspeicher09} and \cite[Corollary 2.10]{weber12}, there are exactly seven free easy quantum groups (also called free orthogonal quantum groups), namely:

\begin{align*}
 B_n^+ &\qquad\subset &{B_n'}^{\!+} &\qquad\subset &B_n^{\#+} &\qquad\subset &O_n^+\\
\upsubset\; & &\upsubset\; & & & &\upsubset\;\\
 S_n^+ &\qquad\subset &{S_n'}^{\!+} & &\subset &  &H_n^+ &.
\end{align*}

The corresponding seven categories of partitions are described as follows.

\begin{align*}
 \langle\singleton\rangle &\qquad\supset &\langle\legpart\rangle &\qquad\supset &\langle\singleton\otimes\singleton\rangle &\qquad\supset &\langle\emptyset\rangle=NC_2\\
\downsubset\; & &\downsubset\; & & & &\downsubset\;\\
 \langle\singleton, \vierpart\rangle=NC &\qquad\supset &\langle\singleton\otimes\singleton, \vierpart\rangle & &\supset &  &\langle\vierpart\rangle & .
\end{align*}

Note that these partitions are all noncrossing, i.e. all of these seven categories are subclasses of $NC$, the collection of all noncrossing partitions.  We denote by $NC_2$ the category of all noncrossing pair partitions.  Furthermore, note that only $\langle\vierpart\rangle$ is a hyperoctahedral category, the category corresponding to the hyperoctahedral quantum group $H_n^+$ by \cite{banicabichoncollins07}. The other six categories are nonhyperoctahedral.

Besides the noncrossing categories, there are many categories which contain partitions that have some crossing lines. The most prominent partition which involves a crossing is the \emph{crossing partition} (also called \emph{symmetry partition}) $\crosspart$ in $P(2,2)$. Every category containing the crossing partition corresponds to a group. By \cite[Theorem 2.8]{banicaspeicher09} we know that there are exactly six easy groups.

\begin{align*}
 B_n &\qquad\subset &B_n'  &\qquad\subset &O_n\\
\upsubset\; & &\upsubset\; &  &\upsubset\;\\
 S_n &\qquad\subset &S_n' &\qquad\subset  &H_n&.
\end{align*}

Accordingly, there are exactly six categories of partitions containing the crossing partition $\crosspart$.

\begin{align*}
 \langle\crosspart,\singleton\rangle  &\qquad\supset &\langle\crosspart,\singleton\otimes\singleton\rangle &\qquad\supset &\langle\crosspart\rangle=P_2\\
\downsubset\; & &\downsubset\; &  &\downsubset\;\\
 \langle\crosspart,\singleton, \vierpart\rangle=P &\qquad\supset &\langle\crosspart,\singleton\otimes\singleton, \vierpart\rangle &\qquad\supset  &\langle\crosspart,\vierpart\rangle &.
\end{align*}

Note that on the level of categories containing the crossing partition, the two categories $\langle\crosspart,\legpart\rangle$ and $\langle\crosspart,\singleton\otimes\singleton\rangle$ coincide. Furthermore, amongst the above categories only $\langle\crosspart,\vierpart\rangle$ is hyperoctahedral; the other five categories are nonhyperoctahedral.

\emph{Half-liberated} easy quantum groups were introduced in \cite{banicaspeicher09} and \cite{banicacurranspeicher09_2}. They correspond to categories containing the half-liberating partition $\halflibpart$ but not the crossing partition $\crosspart$. By \cite[Theorem 4.13]{weber12}, there are exactly the following half-liberated easy quantum groups, containing the \emph{hyperoctahedral series} $H_n^{(s)}$, $s\geq 3$ of \cite[Definition 3.1]{banicacurranspeicher09_2}.

\begin{align*}
 B_n^{\#*} &&\qquad\subset &&O_n^*\\
 &&\quad &&\upsubset\;\\
 &&  &&H_n^*\\
 && &&\upsubset\;\\
 &&  &&H_n^{(s)}, s\geq 3&.
\end{align*}

The corresponding  categories of partitions are described as follows.

\begin{align*}
 \langle\halflibpart,\singleton\otimes\singleton\rangle &&\qquad\supset &&\langle\halflibpart\rangle\\
 && &&\downsubset\;\\
 && &&\langle\halflibpart, \vierpart\rangle\\
 && &&\downsubset\;\\
 && &&\langle\halflibpart, \vierpart, h_s\rangle&.
\end{align*}

Here, $\langle\halflibpart, \vierpart\rangle$ and $\langle\halflibpart, \vierpart, h_s\rangle$ are hyperoctahedral, for all $s\geq 3$. The categories  $\langle\halflibpart,\singleton\otimes\singleton\rangle$ and $\langle\halflibpart\rangle$ in turn are two more nonhyperoctahedral categories, completing the list of 13 nonhyperoctahedral categories.

We conclude that the only class of categories which ought to be classified is the one of hyperoctahedral categories, as illustrated by the following picture.

\begin{center}
\begin{picture}(22,9)
  \put(0,8){$\langle \singleton, \vierpart \rangle$}
  \put(4,8){$\supset$}
  \put(6,8){$\langle \singleton \ot \singleton, \vierpart \rangle$}
  \put(16,8){$\supset$}
  \put(18,8){$\langle \vierpart \rangle$}

  \put(18.5,6.25){$\downsubset$}
  \put(18.5,4){\textbf{?}}
  \put(18.5,2.25){$\downsubset$}
  \put(14,2){\begin{rotate}{45} $\supset$ \end{rotate}}

  \put(0.5, 4.25){$\downsubset$}

  \put(-1.5,0){$\langle \crosspart, \singleton, \vierpart \rangle = P$}
  \put(4,0){$\supset$}
  \put(6,0){$\langle \crosspart, \singleton \ot \singleton, \vierpart \rangle$}
  \put(16,0){$\supset$}
  \put(18,0){$\langle \crosspart, \vierpart \rangle \eqstop$}
\end{picture}
\end{center}

The question is to find all categories $\cC$ of partitions, which contain the four block $\vierpart$ but not the double singleton $\singleton\otimes\singleton$. Furthermore, we can restrict to those categories which do not contain the half-liberating partition $\halflibpart$. The \emph{higher hyperoctahedral series} $H_n^{[s]}$, $s\in\{3,4,\ldots,\infty\}$ of \cite[Section 4]{banicacurranspeicher09_2} fall into this class. They are given by the categories $\langle \vierpart, h_s\rangle$.

\subsection{Base cases in the class of hyperoctahedral categories}
\label{sec:genericity-of-primariy-partition}

By definition, the category $\langle\vierpart\rangle$ is a natural base case in the class of hyperoctahedral categories, but we will see that also other categories serve as base cases for interesting subclasses of hyperoctahedral categories.
For this, we introduce two more partitions.

\begin{definition}
 The \emph{fat crossing partition} $\fatcrosspart$
 is the following partition in $P(4,4)$, connecting the upper points 1 and 2 with the lower points $3'$ and $4'$, as well as the upper points 3 and 4 with the lower points $1'$ and $2'$, i.e. $\fatcrosspart$ consists of two crossing four blocks.
\setlength{\unitlength}{0.5cm}
 \begin{figure}[h]
 \begin{center}
 \begin{picture}(5,7)
 \put(1,5.5){1}
 \put(2,5.5){2}
 \put(3,5.5){3}
 \put(4,5.5){4} 
 \put(1,0){$1'$}
 \put(2,0){$2'$}
 \put(3,0){$3'$}
 \put(4,0){$4'$}
 \thicklines
 \put(0,6){\partii{1}{1}{2}}
 \put(0,6){\partii{1}{3}{4}}
 \put(1.8,2){\line(1,1){2}}
 \put(1.8,4){\line(1,-1){2}}
 \put(0,1){\uppartii{1}{1}{2}}
 \put(0,1){\uppartii{1}{3}{4}}
 \end{picture}
 \end{center}
 \end{figure}
\end{definition}

Note that any category $\mathcal C$ containing the fat crossing, also contains the four block partition (see also Lemma \ref{LemBezBasePart}). If furthermore $\singleton\otimes\singleton\notin\mathcal C$, then $\mathcal C$ is hyperoctahedral. The converse is also true: Any hyperoctahedral category (apart from $\langle\vierpart\rangle$) contains the fat crossing (see Proposition \ref{PropFat}).

\begin{definition}
 The \emph{pair positioner partition} $\primarypart$ is the following partition in $P(3,3)$, consisting of a four block on 1, 2, $2'$ and $3'$ and a pair on 3 and $1'$.
\setlength{\unitlength}{0.5cm}
\begin{center}
  \begin{picture}(5,6)
    \thicklines
    \put(0,6){\partii{1}{1}{2}}
    \put(1.2,1){\line(1,2){2}}
    \put(1.8,4){\line(1,-2){1}}
    \put(0,1){\uppartii{1}{2}{3}}
    \put(1,5.5){1}
    \put(2,5.5){2}
    \put(3,5.5){3}
    \put(1,0){$1'$}
    \put(2,0){$2'$}
    \put(3,0){$3'$}
  \end{picture}
\end{center}
\end{definition}

\begin{remark}
In \cite[Lemma 4.2]{banicacurranspeicher09_2} the following partitions $k_l\in P(l+2,l+2)$ for $l\in \N$ were used to define the \emph{higher hyperoctahedral series} $H_n^{[s]}$ (see also \cite{weber12} for a definition of $k_l$). They are given by a four block on $\{1,1',l+2,(l+2)'\}$ and pairs on $\{i,i'\}$ for $i=2,\ldots,l+1$. The following picture illustrates the partition $k_l$ -- note that the waved line from $1'$ to $l+2$ is \emph{not} connected to the lines from $2$ to $2'$, from $3$ to $3'$ etc.
\setlength{\unitlength}{0.5cm}
\begin{center}
  \begin{picture}(14,5)
    \put(0.5,2.5){$k_l\;\;=$}
    \put(1.9,1.5){\upparti{2}{1}}
    \put(1.9,1.5){\upparti{2}{2}}
    \put(1.9,1.5){\upparti{2}{3}}
    \put(7,2){$\ldots$}
    \put(1.9,1.5){\upparti{2}{8}}
    \put(1.9,1.5){\upparti{2}{9}}
    \put(3,0){$1'$}
    \put(4,0){$2'$}
    \put(5,0){$3'$}
    \put(7,0){$\ldots$}
    \put(8.4,0){$(l+1)'$}
    \put(11,0){$(l+2)'$}
    \put(3,4){$1$}
    \put(4,4){$2$}
    \put(5,4){$3$}
    \put(7,4){$\ldots$}
    \put(9.1,4){$l+1$}
    \put(10.9,4){$l+2$}
    
    \put(7.2,1.5){\oval(8,2)[tl]}
    \put(7.15,3.5){\oval(8,2)[br]}
  \end{picture}
\end{center}

We check that $k_1$ is in a category $\mathcal C$ if and only if all $k_l$ are in $\cC$ for all $l\in\N$ (apply the pair partition to $k_l\otimes k_1$ to obtain $k_{l+1}$). The pair positioner partition $\primarypart$ is a rotated version of $k_1$, thus $\primarypart\in\mathcal C$ if and only if $k_1\in\mathcal C$. Furthermore, $\langle\primarypart\rangle$ corresponds to $H_n^{[\infty]}$ of \cite{banicacurranspeicher09_2}.
\end{remark}

The fat crossing partition $\fatcrosspart$ can be constructed out of the pair positioner partition $\primarypart$ using the category operations. The following lemma shows some relations between the partitions.

\begin{lemma}\label{LemBezBasePart}
The following partitions may be generated inside the following categories using the category operations.
\begin{itemize}
  \item[(i)] $\vierpart\in\langle\fatcrosspart\rangle$.
  \item[(ii)] $\fatcrosspart\in\langle\primarypart\rangle$.
  \item[(iii)] $\primarypart\in\langle h_s\rangle$ for all $s\geq 3$.
  \item[(iv)] $\primarypart\in\langle\halflibpart,\vierpart\rangle$.
\end{itemize}
\end{lemma}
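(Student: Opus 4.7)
Each of the four assertions says that a specific partition can be built from the given generators using only the category operations (tensor product, composition, involution, rotation) together with the partitions $\paarpart$ and $\idpart$, which lie in every category by definition. So the proof will consist of four explicit combinatorial constructions; the substantive content is finding the right sequence of operations, the rest is bookkeeping on how blocks merge under composition.

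For (i), I would exploit the collapse $\fatcrosspart \cdot \fatcrosspart \in P(4,4)$, whose explicit block computation gives the two disjoint blocks $\{U_1,U_2,L_1,L_2\}$ and $\{U_3,U_4,L_3,L_4\}$, i.e.\ a $(2,2)$-rotation of $\vierpart \otimes \vierpart$. Rotating all four upper points down to the lower row puts the result in $P(0,8)$ as two nested $4$-blocks $\{1,2,7,8\}$ and $\{3,4,5,6\}$, and then capping the two adjacent pairs inside the inner block via pair partitions erases it and leaves the outer block as a single $\vierpart$.

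For (iv), a single direct composition will do: I would check that $\primarypart = \halflibpart \cdot (\idpart \otimes \tilde{\vierpart})$, where $\tilde{\vierpart}\in P(2,2)$ is the rotation of $\vierpart$ into a $(2,2)$-partition whose four points form a single block. Tracking the block mergers, the "left" crossing pair of $\halflibpart$ pairs with the left $\idpart$ to produce the pair $\{U_3,L_1\}$ of $\primarypart$, while the two "right" crossing pairs of $\halflibpart$ fuse with the $4$-block of $\tilde{\vierpart}$ to produce the block $\{U_1,U_2,L_2,L_3\}$. This gives $\primarypart$ on the nose and is probably the cleanest of the four arguments.

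Parts (ii) and (iii) are where the real work lies. For (ii), the plan is to combine two copies of $\primarypart$ by tensor product and composition so that the "spectator" pairs of the copies get stitched together into the second $4$-block of $\fatcrosspart$, while the $4$-blocks of the copies become the first $4$-block. I would first experiment with $\primarypart \cdot \primarypart^{*}$ and variants, then with $\primarypart \otimes \primarypart$ composed with carefully chosen caps to bring the total size back down to $P(4,4)$; the danger is that a naive composition will collapse everything into one large block, so the main obstacle is choosing a configuration in which exactly two $4$-blocks emerge with the correct crossing pattern. For (iii), since each block of $h_s$ has at least three points when $s\ge 3$, I would tensor $h_s$ with itself (or with $h_3$) and systematically cap adjacent points of different blocks, aiming to leave exactly six points arranged as the $P(0,6)$-rotation of $\primarypart$, which has blocks $\{2,3,5,6\}$ and $\{1,4\}$. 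The hardest step is to keep the merged block of size exactly four; the hypothesis $s\ge 3$ is essential because it supplies the surplus points needed to absorb extra cap-endpoints without destroying the $4$-block-plus-pair structure of $\primarypart$.
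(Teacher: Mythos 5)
Your constructions for (i) and (iv) are correct. In (i), squaring $\fatcrosspart$ and then capping is a valid (slightly longer) variant of the paper's one-step argument, which simply composes a single $\fatcrosspart$ with $\idpart\otimes\paarpart\otimes\idpart$ and then with $\paarpart$. In (iv) your composition is the paper's, which uses $\vierpartrot\otimes\idpart$ rather than $\idpart\otimes\vierpartrot$; the two choices differ only by a vertical reflection of $\primarypart$ (obtainable by rotation and involution), though note that the block structure you describe for the output actually corresponds to the opposite stacking order from the formula you wrote down.

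The genuine gap is in (ii) and (iii): you correctly identify these as the substantive parts and then leave them as plans ("I would first experiment with\dots", "aiming to leave exactly six points\dots") rather than exhibiting any construction, and the obstacles you name are real and not overcome. For instance, the obvious cap sequences applied to $\primarypart\otimes\primarypart$ (one-line word $aabaab\,ccdccd$) produce the word $aabaabbb$, not the word $aabbaabb$ of the rotated $\fatcrosspart$. The missing idea for (ii) is to use $\primarypart$ as a $(3,3)$-intertwiner that shifts a pair of points of one block past a single point of another block: starting from $\vierpart\otimes\vierpart$, with blocks $\{1,2,3,4\},\{5,6,7,8\}$ (note $\vierpart\in\langle\primarypart\rangle$ by capping the middle two points of the one-line form $aabaab$), composing first with $\idpart^{\otimes 2}\otimes\primarypart\otimes\idpart^{\otimes 3}$ and then with $\idpart^{\otimes 3}\otimes\primarypart\otimes\idpart^{\otimes 2}$ yields the blocks $\{1,2,5,6\},\{3,4,7,8\}$, i.e.\ a rotation of $\fatcrosspart$. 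For (iii) the clean route is not tensoring and capping but a single composition: rotate $h_s$ into a partition $q\in P(3,2s-3)$ whose three upper points carry the letters $a,b,a$ of the cyclic word $(ab)^s$ (here $s\geq 3$ is needed so that both letters still occur among the $2s-3$ lower points), and form $q^*q\in P(3,3)$; the composite is exactly $k_1$, the partition with a four block on $\{1,3,1',3'\}$ and a pair on $\{2,2'\}$, which is a rotation of $\primarypart$ -- this is essentially the paper's composition of $h_s\otimes\idpart^{\otimes 3}$ with a rotated copy of itself. Without explicit constructions of this kind, (ii) and (iii) remain unproved in your write-up.
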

\begin{proof}
(i) We obtain $\vierpart$ as the composition of $\fatcrosspart$, $\idpart\otimes\paarpart\otimes\idpart$ and $\paarpart$.

(ii) Compose the tensor product $\vierpart\otimes\vierpart$ with $\primarypart$ in the following way:

\newsavebox{\primary}
\savebox{\primary}
{
  \begin{picture}(5,6)
    \thicklines
    \put(0,6){\partii{1}{1}{2}}
    \put(1.2,1){\line(1,2){2}}
    \put(1.8,4){\line(1,-2){1}}
    \put(0,1){\uppartii{1}{2}{3}}
  \end{picture}
}

\setlength{\unitlength}{0.5cm}
\begin{center}
  \begin{picture}(19,12)
    \put(0,10){\uppartiv{1}{1}{2}{3}{4}}
    \put(0,10){\uppartiv{1}{5}{6}{7}{8}}
    
    \put(0,5.5){\upparti{4}{1}}
    \put(0,5.5){\upparti{4}{2}}
    \put(1.8,4.4){\usebox{\primary}}
    \put(0,5.5){\upparti{4}{6}}
    \put(0,5.5){\upparti{4}{7}}
    \put(0,5.5){\upparti{4}{8}}
    
    \put(0,1){\upparti{4}{1}}
    \put(0,1){\upparti{4}{2}}
    \put(0,1){\upparti{4}{3}}
    \put(2.8,0){\usebox{\primary}}
    \put(0,1){\upparti{4}{7}}
    \put(0,1){\upparti{4}{8}}
    
    \put(10,6){$=$}

    \put(10.5,5.5){\uppartiv{1}{1}{2}{5}{6}}
    \put(10.5,5.5){\uppartiv{2}{3}{4}{7}{8}}

    \put(19,5.5){.}
  \end{picture}
\end{center}
Then use rotation to obtain $\fatcrosspart$.

(iii) We construct the rotated version $k_1$ of $\primarypart$ using $h_s\otimes \idpart^{\otimes 3}$ and its rotated version:
\setlength{\unitlength}{0.5cm}
\begin{center}
  \begin{picture}(21,9)
    \put(0,5){\uppartiii{2}{1}{3}{5}}
    \put(5,7){\line(1,0){1}}
    \put(8.25,6.95){$\ldots$}
    \put(0,5){\uppartiii{1}{2}{4}{6}}
    \put(6,6){\line(1,0){1}}
    \put(8.25,5.95){$\ldots$}
    \put(12,5){\upparti{2}{1}}
    \put(12,5){\upparti{2}{2}}
    \put(12,5){\upparti{2}{3}}
    \put(10.8,7){\line(1,0){.5}}
    \put(10.8,6){\line(1,0){1.5}}
    \put(10,5){\upparti{2}{1}}
    \put(11,5){\upparti{1}{1}}

    \put(3,5){\partiii{1}{1}{3}{5}}
    \put(8,3){\line(1,0){1}}
    \put(10.4,2.95){$\ldots$}
    \put(3,5){\partiii{2}{2}{4}{6}}
    \put(9,2){\line(1,0){1}}
    \put(10.4,1.95){$\ldots$}
    \put(0,2){\upparti{2}{1}}
    \put(0,2){\upparti{2}{2}}
    \put(0,2){\upparti{2}{3}}
    \put(11.8,3){\line(1,0){.5}}
    \put(11.8,2){\line(1,0){1.5}}
    \put(11,5){\partii{2}{2}{4}}
    \put(11,5){\partii{1}{1}{3}}
    
    \put(16.5,4.5){$=$}
    \put(16.8,3.7){\upparti{2}{1}}
    \put(16.8,3.7){\upparti{2}{2}}
    \put(16.8,3.7){\upparti{2}{3}}
    \put(19.1,3.7){\oval(2,2)[tl]}
    \put(19.1,5.7){\oval(2,2)[br]}
    
    \put(1.1,1){a}
    \put(2.1,1){b}
    \put(3.1,1){a}
    \put(13.1,7.5){a}
    \put(14.1,7.5){b}
    \put(15.1,7.5){a}
    
    \put(17.9,2.8){a}
    \put(18.9,2.8){b}
    \put(19.9,2.8){a}
    \put(17.9,6){a}
    \put(18.9,6){b}
    \put(19.9,6){a}
    
    \put(20.25,2.8){.}
  \end{picture}
\end{center}

(iv) The partition $\vierpartrot\in P(2,2)$ is a rotated version of the four block $\vierpart$. Compose $\vierpartrot\otimes\idpart$ with the half-liberated partition $\halflibpart$ to obtain $\primarypart$.
\end{proof}

The pair positioner partition $\primarypart$ plays an important role in the sequel. By the preceding lemma, we see that any category $\cC$ containing the pair positioner partition $\primarypart$ also contains the four block partition $\vierpart$. Thus, these categories form a subclass of the hyperoctahedral categories.

\begin{definition}
 A hyperoctahedral category $\cC$ is called \emph{simplifiable}, if it contains the pair positioner partition $\primarypart$.
\end{definition}

Simplifiable hyperoctahedral categories carry a nice feature -- they can be described by very simplified partitions. This is the content of Lemma \ref{LemThreeRow}. We first prove a lemma on the block structure of partitions in simplifiable hyperoctahedral categories.

\begin{lemma}\label{LemBlockVerbinden}
 Let $\cC$ be any category of partitions, and let $p\in\cC$.
\begin{itemize}
 \item[(a)] If $\cC$ contains the four block partition $\vierpart$, we can connect neighbouring blocks of $p$ inside of $\cC$, i.e. the partition $p'$ obtained from $p$ by connecting two blocks of $p$ which have at least two neighbouring points is again in $\cC$.
 \item[(b)] If $\cC$ contains the pair positioner partition $\primarypart$, we can connect arbitrary blocks of $p$ inside of $\cC$, i.e. the partition $p'$ obtained from $p$ by combining two arbitrary blocks of $p$ is again in $\cC$.
\end{itemize}
\end{lemma}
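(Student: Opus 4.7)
My plan for (a) is to give an explicit construction. Applying rotations---a legitimate category operation---I would first reduce to the case where the two neighbouring points of $B_1$ and $B_2$ occupy consecutive positions $i, i+1$ on the lower row of $p$. Let $\vierpartrot \in P(2,2)$ denote the rotation of $\vierpart$ to two upper and two lower points, still forming a single four-point block; it lies in $\cC$. The tensor product
\[
q \;=\; \idpart^{\otimes(i-1)} \,\otimes\, \vierpartrot \,\otimes\, \idpart^{\otimes(l-i-1)}
\]
is then in $\cC$, where $l$ is the number of lower points of $p$. In the composition $qp$, the identity strands of $q$ preserve the block structure of $p$ outside positions $i, i+1$, while $\vierpartrot$ merges the blocks $B_1$ and $B_2$ into a single block. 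Since $qp \in \cC$ by closure under composition, this delivers the desired partition $p' = qp$. The cases where the two neighbouring points lie on the upper row, or straddle the left or right boundary of the diagram, are handled symmetrically (the latter two via a further rotation that moves one of the two points to the lower row).

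For (b), I would pursue the same basic strategy, but the connecting partition is more elaborate and built from the pair positioner $\primarypart$. After a rotation, assume $B_1$ and $B_2$ both contain lower points, at positions $i<j$, with blocks of $p$ occupying all positions strictly between. I would induct on the number of such intermediate blocks. The base case is covered by (a), which is applicable here because $\vierpart \in \cC$ by Lemma \ref{LemBezBasePart}(i) and (ii). For the inductive step, the crossing structure of $\primarypart$---a single pair passing over a four-block---allows me to construct, via tensor products of $\primarypart$ with copies of $\idpart$ and subsequent compositions, a bridge partition $q \in \cC$ whose effect on $p$ is to connect a point of $B_1$ with a point lying one intermediate block closer to $B_2$, reducing the inductive distance by one.

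The principal obstacle is in part (b): I must verify that this bridge partition introduces no spurious identifications among the other blocks of $p$. This requires careful bookkeeping of how the strands of $\primarypart$ interact with those of $p$ under composition, and a targeted use of part (a) at intermediate stages to absorb any transient four-blocks created by the bridge, so that the cumulative effect is precisely the merger of $B_1$ with $B_2$ and nothing else. Once this is achieved one intermediate block at a time, the induction completes and yields $p' \in \cC$.
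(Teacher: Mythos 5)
Your part (a) is correct and coincides with the paper's argument: rotate $p$ to a one-line partition and compose with $\idpart^{\ot\alpha}\ot\vierpartrot\ot\idpart^{\ot\beta}$ for suitable $\alpha,\beta$.

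Part (b), however, has a genuine gap, and it sits exactly at the point you flag as the ``principal obstacle.'' Your inductive step proposes a bridge whose effect is to ``connect a point of $B_1$ with a point lying one intermediate block closer to $B_2$.'' If that nearer point belongs to an intermediate block of $p$, this move merges $B_1$ with that intermediate block --- which is precisely the spurious identification you must avoid --- and no subsequent category operation can undo a merger; in particular your proposed remedy of invoking part (a) to ``absorb transient four-blocks'' cannot work, since (a) only produces further mergers, never fewer. The missing idea is that the object slid past the intermediate blocks should not be a point of $p$ at all but a freshly inserted auxiliary pair. The paper's proof of (b) runs as follows: compose $p$ with $\idpart^{\ot\alpha}\ot\paarpart\ot\idpart^{\ot\beta}$ to insert two new adjacent points next to $b_1$; connect this pair to $b_1$ by (a); then shift the pair step by step towards $b_2$ by composing with (tensor shifts of) $\primarypart$ --- this is harmless because the four-block of $\primarypart$ ties together only the two points being shifted, which already lie in a common block, while its remaining pair merely transports the bypassed strand, so no two distinct blocks of $p$ get identified; once the auxiliary pair is adjacent to $b_2$, connect it to $b_2$ by (a); finally cap the two auxiliary points with $\paarpart$. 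The net effect is exactly the merger of $b_1$ and $b_2$ and nothing else. Without this device (or an equivalent one), your induction does not close.
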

\begin{proof}
We may assume that $p$ has no upper points, by rotation.

(a) We can compose $p$ with $\idpart^{\ot \alpha} \ot \vierpartrot \ot \idpart^{\ot \beta}$ for suitable $\alpha$ and $\beta$.


(b) By composition, we insert a pair partition $\paarpart$ next to the block $b_1$ of $p$.
 By (a), we can connect it to $b_1$. Using the pair positioner partition $\primarypart$, we can shift these two points next to the block $b_2$. Again by (a), we connect it to $b_2$. This yields a partition in which the blocks $b_1$ and $b_2$ are connected. Capping this partition with the pair partition erases the two auxiliary points and yields the desired partition in $\cC$.
\end{proof}

Let $p\in P(0,l)$ be a partition with $k$ blocks. We may view $p$ as a \emph{word} in $k$ letters $a_1,\ldots,a_k$ corresponding to the points connected by the partition $p$:
\[p=a_{i(1)}^{k_1}a_{i(2)}^{k_2}\ldots a_{i(n)}^{k_n} \eqstop \]
Here $a_{i(j)}\neq a_ {i(j+1)}$ for $j=1,\ldots,n-1$ and $k_j\in\N$. 
For example, the four block partition $\vierpart$ corresponds to the word $a^4$, (a rotated version of) the pair positioner partition $\primarypart$ corresponds to $ab^2ab^2$, and the double singleton $\singleton\otimes\singleton$ corresponds to $ab$. Conversely, every word $a_{i(1)}^{k_1}a_{i(2)}^{k_2}\ldots a_{i(n)}^{k_n}$ of length $l$ yields a partition $p\in P(0,l)$ connecting nothing but equal letters of the word.

For technical reasons, we introduce the \emph{empty partition} $\emptyset\in P(0,0)$ which is by definition in any category of partitions $\cC$.

\begin{lemma}\label{LemThreeRow}\label{LemReduzieren}
 Let $\cC$ be a category of partitions. Let $p\in P(0,l)$ be a partition, seen as the word $p=a_{i(1)}^{k_1}a_{i(2)}^{k_2}\ldots a_{i(n)}^{k_n}$.
\begin{itemize}
 \item[(a)]  We put $k_j':= \begin{cases} 1 &\textnormal{if $k_j$ is odd}\\ 2 &\textnormal{if $k_j$ is even}\end{cases}$, and $p':=a_{i(1)}^{k_1'}a_{i(2)}^{k_2'}\ldots a_{i(n)}^{k_n'}$.
 
 If $\cC$ contains the four block partition $\vierpart$, then $p\in\mathcal C$ if and only if $p'\in\mathcal C$. 
 \item[(b)]  We put $k_j'':= \begin{cases} 1 &\textnormal{if $k_j$ is odd}\\ 0 &\textnormal{if $k_j$ is even}\end{cases}$, and $p'':=a_{i(1)}^{k_1''}a_{i(2)}^{k_2''}\ldots a_{i(n)}^{k_n''}$. It is possible that $p''=\emptyset$.
 
 If $\cC$ contains the pair positioner partition $\primarypart$, then $p\in\mathcal C$ if and only if $p''\in\mathcal C$. 
\end{itemize}
\end{lemma}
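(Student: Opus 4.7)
The plan is to establish both equivalences by alternating two basic operations available in any category of partitions: \emph{capping}, which composes a partition from above with $\idpart^{\otimes a} \otimes \baarpart \otimes \idpart^{\otimes b}$ to erase two adjacent same-block points; and \emph{insertion}, which composes from above with a partition whose non-identity part is either $\paarpart$ (inserting a fresh pair block) or a rotated copy of $\vierpart$ with one upper and three lower points (enlarging an existing block by two adjacent points). Capping needs only $\paarpart$ and so is available in every category; the $\vierpart$-insertion is available whenever $\vierpart \in \cC$; merging a freshly inserted pair with distant occurrences of the same block requires $\primarypart$ via Lemma~\ref{LemBlockVerbinden}(b).

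For part (a), forward direction: inside every run of length $k_j \geq 3$ there are two adjacent points in the same block, so a single capping drops $k_j$ to $k_j - 2$ without affecting other runs or blocks. Iterating reduces each $k_j$ to $k_j' \in \{1,2\}$ while preserving parity; I stop at $2$ whenever $k_j$ is even so that the block survives. For the converse, I apply the $\vierpart$-insertion to add two points at a time next to the remaining points of each shortened run, growing $k_j'$ back to $k_j$.

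For part (b), forward direction: I run the same capping procedure, but now push it all the way to $k_j'' = 0$ when $k_j$ is even. A closed middle component arising when a block is fully erased is discarded by the composition rule, so the procedure stays inside $\cC$. The converse direction is the crux: for each $j$ with $k_j'' = 0$, I first insert a fresh pair at the correct position by composing with $\idpart^{\otimes m} \otimes \paarpart \otimes \idpart^{\otimes (l-m)}$. If the letter $a_{i(j)}$ still appears somewhere in $p''$, that pair must then be merged with the existing block $a_{i(j)}$, and here I invoke Lemma~\ref{LemBlockVerbinden}(b), whose proof is where $\primarypart$ is indispensable. Once a correctly labelled run of length two sits at position $j$, I grow it to $k_j$ using the $\vierpart$-insertion from (a); note $\vierpart \in \cC$ by Lemma~\ref{LemBezBasePart} since $\primarypart \in \cC$. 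Odd runs of $p''$ are grown from $1$ to $k_j$ by the same $\vierpart$-insertion.

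The technical obstacle is the reverse direction of (b): I must schedule the pair insertions, block merges, and run growths in the right order, so that after every intermediate composition the partition in question still lies in $\cC$ and carries the block structure I intend. The essential role of $\primarypart$ is precisely to reconnect a freshly inserted pair with distant occurrences of the same letter of the word, a step that $\vierpart$ alone cannot perform.
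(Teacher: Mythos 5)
Your argument is correct and follows essentially the same route as the paper: cap runs down with the pair partition for the forward directions, and for the converses insert pairs and reconnect them via Lemma~\ref{LemBlockVerbinden} (part (a) needing only the $\vierpart$-move, part (b) needing $\primarypart$ to merge an inserted pair with a distant block). The one place the paper does something different is the sub-case $p''=\emptyset$, which it treats separately using interval partitions and the fat crossing $\fatcrosspart$; your uniform insert--merge--grow scheme does cover it (starting from tensor products of $\paarpart$ and merging via Lemma~\ref{LemBlockVerbinden}(b)), but since the statement explicitly allows $p''=\emptyset$ you should say this in one sentence rather than leave it implicit.
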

\begin{proof}
(a) If $k_j\geq 3$, we compose $p$ with the pair partition to erase two of the neighbouring $a_{i(j)}$-points. Since this operation can be done iteratively and inside the category $\mathcal C$, we infer that $p'\in\mathcal C$ whenever $p\in\mathcal C$. For the converse, we compose $p'$ with $\idpart^{\otimes \alpha}\otimes\paarpart\otimes\idpart^{\otimes\beta}$ for suitable $\alpha, \beta$, such that the pair is situated right beside one of the $a_{i(j)}$-points of $p'$. By Lemma \ref{LemBlockVerbinden}(a), we can connect these two points to the block to which  $a_{i(j)}$ belongs, which yields a partition $\tilde p'$ where the power $k_j$ of $a_{i(j)}$ is increased by two.  By this procedure, we construct $p$ out of $p'$ inside the category $\mathcal C$.

(b) Assume first that $p''\neq \emptyset$.
If $p\in\cC$, then $p''\in \cC$ again by using the pair partition $\paarpart$. For the converse, insert pair partitions $\paarpart$ at every position in $p''$ where $k_j''=0$. By Lemma \ref{LemBlockVerbinden}(b), we can connect these pairs to the according blocks of $p''$ such that we obtain a partition $p'$ as in (a). Since the four block $\vierpart$ is in $\cC$ (see Lemma \ref{LemBezBasePart}), we conclude $p\in\cC$ using (a).

Secondly, if $p''=\emptyset$, then all exponents $k_j$ of $p$ are even. All interval partitions $q=q_1\otimes\ldots\otimes q_m$, where every partition $q_j$ consists of a single block of even length respectively, are in $\cC$. Using the fat crossing partition $\fatcrosspart$ (which is in $\cC$ by Lemma \ref{LemBezBasePart}), the partition $p$ may be obtained from a suitable interval partition $q$, by composition. Thus, $p\in\cC$.
\end{proof}

Lemma \ref{LemReduzieren}(b) will be crucial for the study of simplifiable hyperoctahedral quantum groups in the sequel. 
 
\begin{remark}
 Lemma \ref{LemReduzieren} can be extended to arbitrary partitions $p\in P(k,l)$, by rotation.
\end{remark}

In simplifiable hyperoctahedral categories, we have a notion of equivalence of partitions according to Lemma \ref{LemReduzieren}(b).

\begin{definition}
\label{def:equivalence-of-partition}
  Two partitions $p, q$ are called \emph{equivalent}, if $q$ can be obtained from $p$ by the following operations:
\begin{itemize}
 \item Elimination of two consecutive points belonging to the same block.
 \item Insertion of two consecutive points into the partition at any position and either connecting it to any other block -- or not.
\end{itemize}
\end{definition}

\begin{example}
  Both of the following partitions 
  \begin{center}
  \begin{picture}(18,3)
    \put(0,0){\uppartii{1}{1}{6}}
    \put(0,0){\uppartiv{3}{2}{3}{5}{8}}
    \put(0,0){\uppartii{2}{4}{7}}
    
    \put(9,0){\uppartii{1}{1}{4}}
    \put(9,0){\uppartii{2}{2}{7}}
    \put(9,0){\uppartii{3}{3}{8}}
    \put(9,0){\uppartii{1}{5}{6}}
  \end{picture}
  \end{center}
are equivalent to the rotation of the half-liberating partition:
  \begin{center}
  \begin{picture}(6,3)
    \put(0,0){\uppartii{1}{1}{4}}
    \put(0,0){\uppartii{2}{2}{5}}
    \put(0,0){\uppartii{3}{3}{6}}

    \put(6.5,0){.}
  \end{picture}
  \end{center}
\end{example}

\begin{lemma}
\label{lem:equivalence-stays-in-category}
  Let $\cC$ be a simplifiable hyperoctahedral category and $p \in \cC$.  Let $q$ be a partition that is equivalent to $p$.  Then $q \in \cC$.
\end{lemma}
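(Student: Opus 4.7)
My plan is to show that each of the two operations defining the equivalence preserves membership in $\cC$; since $p$ and $q$ are related by a finite sequence of such operations, an induction on the number of operations then yields $q\in \cC$.

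For the elimination operation, I would cap off the two targeted consecutive same-block points by composing $p$ with a suitable tensor product of pair partitions and identities: specifically $\idpart^{\otimes \alpha} \otimes \paarpart^{*} \otimes \idpart^{\otimes \beta}$ applied to the row containing the two points (after rotating to bring them to a common row if necessary). Since $\cC$ contains $\idpart$ and $\paarpart$, hence also $\paarpart^{*}$ by involution, and is closed under tensor product and composition, the resulting partition lies in $\cC$.

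For the insertion operation --- adding two consecutive points that either form a new block or are connected to an existing block $b$ --- I would follow the technique already exhibited in the proof of Lemma \ref{LemBlockVerbinden}(b). First, tensoring $p$ with $\paarpart$ appends a fresh pair to the end of $p$. Next, I would use the pair positioner $\primarypart$, which lies in $\cC$ by simplifiability, to transport this pair leftward to the desired insertion position, using the same mechanism of successive compositions as in the proof of Lemma \ref{LemBlockVerbinden}(b). If the inserted pair should remain a separate block, the construction is complete; if it should instead be connected to the existing block $b$, a final application of Lemma \ref{LemBlockVerbinden}(b) merges the newly positioned pair into $b$.

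The only nontrivial technical step is the transport of a pair to an arbitrary position via $\primarypart$, but this is essentially the content of Lemma \ref{LemBlockVerbinden}(b), so I do not anticipate any serious obstacle beyond careful bookkeeping of indices.
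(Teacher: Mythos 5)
Your proposal is correct and follows essentially the paper's own route: the paper proves this lemma simply by citing Lemma \ref{LemReduzieren}(b), whose proof uses exactly the moves you describe (capping consecutive same-block points with the pair partition, inserting pairs by composition with $\idpart^{\otimes \alpha}\otimes\paarpart\otimes\idpart^{\otimes \beta}$, and joining them to existing blocks via Lemma \ref{LemBlockVerbinden}(b) using $\primarypart$). The only cosmetic difference is that your transport of the appended pair via $\primarypart$ can be bypassed by inserting the pair directly at the desired position through composition, but this does not affect correctness.
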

\begin{proof}
See Lemma \ref{LemReduzieren}(b). 
\end{proof}

The category $\langle\primarypart\rangle$ is the base case for the simplifiable hyperoctahedral categories of partitions, i.e. it is contained in all simplifiable hyperoctahedral categories. We show now that the category $\langle\fatcrosspart\rangle$ is a base case for all hyperoctahedral categories that contain at least one crossing partition.

\begin{proposition}\label{PropFat}
 Let $\mathcal C$ be a hyperoctahedral category of partitions with $\mathcal C\neq\langle\vierpart\rangle$. Then the fat crossing partition $\fatcrosspart$ is in $\mathcal C$.
\end{proposition}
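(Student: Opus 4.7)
The plan is to show, in three steps, that hyperoctahedrality of $\mathcal{C}$ combined with $\mathcal{C} \neq \langle \vierpart \rangle$ forces $\fatcrosspart \in \mathcal{C}$. I would first establish that every block of every partition in $\mathcal{C}$ has even cardinality. If some $p \in \mathcal{C}$ had an odd-sized block, then by rotation, iterated merging of neighbouring blocks via Lemma \ref{LemBlockVerbinden}(a) (available since $\vierpart \in \mathcal{C}$), and capping adjacent same-block pairs via $\paarpart$, applied to $p$ or to $p \otimes p$, one could isolate two odd-sized blocks and shrink each down to a single point, yielding $\singleton \otimes \singleton \in \mathcal{C}$ and contradicting hyperoctahedrality.

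I would then argue that $\mathcal{C}$ contains a partition with a crossing. The category $\langle \vierpart \rangle$ coincides with the set of non-crossing partitions having all blocks of even size; the non-trivial inclusion follows by a standard induction decomposing such a partition via an outermost even-sized block (itself built in $\langle \vierpart \rangle$ by merging $\paarpart$'s with $\vierpart$ via Lemma \ref{LemBlockVerbinden}(a)) nesting smaller even-blocked partitions. Combined with the first step, any hyperoctahedral $\mathcal{C}$ containing only non-crossing partitions equals $\langle \vierpart \rangle$; so the hypothesis forces some $p \in \mathcal{C}$ to contain a crossing.

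Finally, I extract $\fatcrosspart$ from such a $p$. Let $B_1, B_2$ be two crossing blocks of $p$; by rotation assume $p$ has only lower points. Iterating Lemma \ref{LemBlockVerbinden}(a), merge every block of $p$ other than $B_1, B_2$ into one of its neighbours, choosing at each step not to combine $B_1$ with $B_2$. The resulting two-block partition $p' \in \mathcal{C}$ still crosses, and Lemma \ref{LemReduzieren}(a) reduces it to a word $a^{k_1} b^{k_2} \cdots$ alternating between $B_1$ and $B_2$ with each $k_i \in \{1, 2\}$. If every $k_i = 2$, capping adjacent same-letter pairs via $\paarpart$ (which does not merge distinct blocks) trims the word down to $a^2 b^2 a^2 b^2$, a rotated form of $\fatcrosspart$. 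If some $k_i = 1$, analogous trimming first yields $\crosspart \in \mathcal{C}$; then $\fatcrosspart$ is realised inside $\mathcal{C}$ as the composition of $\vierpartrot \otimes \vierpartrot \in P(4,4)$ with a permutation $\sigma$ of the four lower strings that swaps the pair $\{1', 2'\}$ with $\{3', 4'\}$, expressed as a tensor-padded product of copies of $\crosspart$. The main obstacle is this last step: the merging in Lemma \ref{LemBlockVerbinden}(a) is restricted to neighbouring blocks, so the sequence of merges must be carefully orchestrated to preserve the crossing between $B_1$ and $B_2$ (possibly via intermediate merges of extraneous blocks with each other); and in the subcase where the reduction lands on $\crosspart$, the parity mismatch between $\crosspart$ and $\fatcrosspart$ (odd versus even exponents) must be bridged by the explicit permutation construction, since Lemma \ref{LemReduzieren}(a) alone cannot cross parities.
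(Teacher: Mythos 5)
Your overall skeleton (produce a partition with a crossing, merge all blocks into the two crossing ones, reduce the exponents to $\{1,2\}$, then trim with the pair partition) is the same as the paper's, but two of your intermediate claims are false, and the second one leaves a genuine hole. First, Step 1 is wrong as stated: the hyperoctahedral category $\langle\vierpart,h_3\rangle$ (that of $H_n^{[3]}$, which is $\neq\langle\vierpart\rangle$) contains $h_3=ababab$, whose two blocks each have size $3$. Your shrinking argument fails there because the legs of an odd block need not ever become adjacent under rotation and neighbour-merging, so they cannot be capped down to a singleton. For Step 2 you only need the even-block property for \emph{noncrossing} categories, where it does hold (this is the classification of noncrossing categories recalled in Section~\ref{sec:review-classification}, which is what the paper invokes), so this part is repairable -- but not by the argument you give.

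The fatal gap is the last case of Step 3. When some exponent $k_i$ equals $1$, trimming does \emph{not} in general yield $\crosspart$: the reduction can terminate at $h_s=(ab)^s$ with $s\geq 3$, or at the word $aabaab$ (a rotation of $\primarypart$), and neither $\langle\vierpart,h_s\rangle$ for $s\geq 3$ nor $\langle\primarypart\rangle$ contains $\crosspart$ -- these are exactly the non-classical categories of $H_n^{[s]}$ and $H_n^{[\infty]}$, whereas the categories containing $\crosspart$ are the six group categories. So your plan of assembling $\fatcrosspart$ from $\crosspart$ and $\vierpartrot$ never gets off the ground in precisely the cases that matter. The paper closes these cases with the explicit constructions of Lemma~\ref{LemBezBasePart}: $\fatcrosspart\in\langle\primarypart\rangle$ (compose $\vierpart\otimes\vierpart$ with two copies of $\primarypart$ and rotate) and $\primarypart\in\langle h_s\rangle$ for $s\geq 3$ (compose $h_s\otimes\idpart^{\otimes 3}$ with a rotated copy of itself). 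Without an argument of this kind your proof does not cover the terminal words $(ab)^s$ and $aabaab$, and is therefore incomplete.
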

\begin{proof}
We show that one of the following cases hold for $\cC$:
\begin{itemize}
 \item $\fatcrosspart\in\mathcal C$.
 \item $\primarypart\in\mathcal C$.
 \item $h_s\in\mathcal C$ for some $s\geq 3$.
\end{itemize}
By Lemma \ref{LemBezBasePart} this will complete the proof.

The only hyperoctahedral category of \emph{noncrossing} partitions is $\langle\vierpart\rangle$. Thus, $\mathcal C$ contains a partition $p\in P\backslash NC$ with a crossing. We may assume that $p$ consists only of two blocks $b_1$ and $b_2$, after connecting all other blocks with one of the crossing blocks, using Lemma \ref{LemBlockVerbinden}(a).  Furthermore, we may assume that no three points in a row are connected by one of the blocks, by Lemma \ref{LemThreeRow}(a). Hence, we may write $p$ as
\[p=a^{k_1}b^{k_2}a^{k_3}b^{k_4}\ldots a^{k_n} \qquad \textnormal{or} \qquad p=a^{k_1}b^{k_2}a^{k_3}b^{k_4}\ldots b^{k_n}\]
where $k_i\in\{1,2\}$ and $n\geq 4$, and $a$ and $b$ correspond to the points connected by the blocks $b_1$ resp. $b_2$. Note that the length of $p$ is even (otherwise we could construct the singleton $\singleton$ using the pair partition).

If all $k_i=1$, then $p=h_s$ for some $s\geq 2$ -- the case $p=h_2$ implying all other cases of the claim. Otherwise, we may assume $k_1=2$ by rotation. If $n\geq 5$, we may erase the two points $a^{k_1}$ using the pair partition and we obtain a partition $p'\in\mathcal C$ which still has a crossing. Iterating this procedure, we either end up with a partition $h_s$ for some $s\geq 2$ or with a partition $p\in\mathcal C$ such that $k_1=2$ and $n=4$. In the latter case, $p$ is of length six or eight. There are exactly four cases of such a partition:
\begin{itemize}
 \item $p=aababb$ -- An application of the pair partition would yield $\singleton\otimes\singleton\in\mathcal C$ which is a contradiction.
 \item $p=aabaab$ -- This is a rotated version of $\primarypart$.
 \item $p=aabbab$ -- Again this would yield $\singleton\otimes\singleton\in\mathcal C$.
 \item $p=aabbaabb$ -- This is $\fatcrosspart$ in a rotated version.
\end{itemize}
\end{proof}

\begin{remark}
 Since $\langle\fatcrosspart\rangle$ contains a crossing partition, we have $\langle\vierpart\rangle\subsetneqq\langle\fatcrosspart\rangle$. Furthermore, we have $\langle\fatcrosspart\rangle\subset\langle\primarypart\rangle$ by Lemma \ref{LemBezBasePart}. For the proof of  $\langle\fatcrosspart\rangle\neq\langle\primarypart\rangle$ we refer to Section \ref{sectCStarAlg}.
\end{remark}

\subsection{The single leg form of a partition}
\label{sec:single-leg-form}

The pair positioner partition $\primarypart$ allows us to simplify the classification problem, since we can reduce to partitions of a nicer form.

\begin{definition}
 A partition $p\in P$ is in \emph{single leg form}, if $p$ is -- as a word -- of the form
\[p=a_{i(1)}a_{i(2)}\ldots a_{i(n)} \eqcomma\]
where $a_{i(j)}\neq a_ {i(j+1)}$ for $j=1,\ldots,n-1$.
The letters $a_1,\ldots,a_k$ correspond to the points connected by the partition $p$.
In other words, in a partition in single leg form no two consecutive points belong to the same block.

Let $\mathcal C$ be a category of partitions (or simply a set of partitions). We denote by $\mathcal C_{sl}$ the set of all partitions $p\in\mathcal C$ in single leg form. By $P_{sl}$, we denote the collection of all partitions in single leg form.
\end{definition}

\begin{lemma}\label{LemLetterTwice}
 Let $\mathcal C$ be a hyperoctahedral category and let $p\in\mathcal C_{sl}$ be a partition in single leg form. Then, every letter in the word $p$ appears at least twice. Furthermore, every word in $\cC_{sl}$ consists of at least two letters,  it has length at least four, and it is of even length.
\end{lemma}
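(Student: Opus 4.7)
All four assertions reduce to two structural statements that I will prove for every $p \in \cC$: (a) the total length of $p$ is even, and (b) every block of $p$ contains at least two points. Granting these, the lemma follows: (a) is the even-length claim. Since a letter in the word-representation of any $p \in \cC_{sl}$ appears exactly once for each point of the corresponding block, (b) translates to ``every letter appears at least twice''. In single leg form consecutive letters differ, so a one-letter word would have length one and thus be a singleton partition, ruled out by (b); hence there are at least two distinct letters, and combined with (b) the total length is at least four.

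The common tool for both (a) and (b) is iterative \emph{capping}. For $p \in P(0,n) \cap \cC$ and $1 \leq j \leq n-1$, the composition
\[
p_j \;:=\; p \cdot \bigl( \idpart^{\otimes (j-1)} \otimes \paarpart^{\ast} \otimes \idpart^{\otimes (n-j-1)} \bigr)
\]
lies in $P(0, n-2) \cap \cC$, because $\paarpart, \idpart \in \cC$ by definition of a category and $\cC$ is closed under involution, tensor product and composition. Diagrammatically, $p_j$ is obtained from $p$ by deleting the two points at positions $j$ and $j+1$; the blocks of $p$ meeting $\{j, j+1\}$ are merged into a single block with those two points removed, whereas every block of $p$ disjoint from $\{j, j+1\}$ is preserved unchanged (up to reindexing of the surviving positions). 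Since rotation is a category operation, we may freely bring any designated point of $p$ to any desired position before applying a cap.

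For (a), if some $p \in \cC$ had odd length $n$, iterating the capping $(n-1)/2$ times would land us in $P(0,1) \cap \cC = \{\singleton\}$, giving $\singleton \in \cC$ and hence $\singleton \otimes \singleton \in \cC$ by tensor product, contradicting hyperoctahedrality. For (b), suppose $p \in \cC$ has a singleton block; after rotation assume $p \in P(0,n)$ with $\{1\}$ a block, and by (a) the length $n$ is even. Capping the pairs $(2,3), (4,5), \ldots, (n{-}2, n{-}1)$ successively---none of which touches position $1$---produces a partition in $P(0,2) \cap \cC$ in which $\{1\}$ is still a singleton block. With only two visible points remaining, the second must then also form a singleton block, so the final partition is $\singleton \otimes \singleton \in \cC$, once again contradicting hyperoctahedrality. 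The main delicate point is the block-level behaviour of the composition formula above---in particular verifying that a singleton block disjoint from $\{j, j+1\}$ is genuinely left untouched by the $j$-th cap---which is a routine diagrammatic check.
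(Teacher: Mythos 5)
Your proof is correct and takes essentially the same approach as the paper, whose entire proof is the one-line observation that the double singleton $\singleton\otimes\singleton$ is not contained in $\cC$. Your iterative-capping argument simply supplies the diagrammatic details (producing $\singleton$ from an odd-length partition and $\singleton\otimes\singleton$ from a partition with a singleton block) that the paper leaves implicit.
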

\begin{proof}
 The double singleton $\doublesingleton$ is not contained in $\mathcal C$.
\end{proof}

If $p\in P(0,l)$ is a partition seen as the word $p=a_{i(1)}^{k_1}a_{i(2)}^{k_2}\ldots a_{i(n)}^{k_n}$, the partition $p''=a_{i(1)}^{k''_1}a_{i(2)}^{k''_2}\ldots a_{i(n)}^{k''_n}$ of Lemma \ref{LemReduzieren}(b) is not necessarily in single leg form, e.g. $p=ab^2acacaca$ yields $p''=a^2cacaca$. However, a finite iteration of the procedure as in Lemma \ref{LemReduzieren}(b) either yields a partition $q$ in single leg form or the empty partition $\emptyset\in P(0,0)$. This partition $q$ (possibly the empty partition) is called the \emph{simplified partition associated to} $p$. Note that every partition has a unique simplified partition -- the converse is not true. We can state a variation of Lemma \ref{LemReduzieren}(b).

\begin{lemma}\label{LemmaX}
 Let $\cC$ be a simplifiable hyperoctahedral category of partitions. Then, a partition is in $\cC$ if and only if its simplified partition is in $\cC$.
\end{lemma}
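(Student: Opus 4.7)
The plan is to obtain the claim by a finite iteration of Lemma \ref{LemReduzieren}(b). Given a partition $p$, write it as a word $p = a_{i(1)}^{k_1} a_{i(2)}^{k_2} \dotsm a_{i(n)}^{k_n}$ with $a_{i(j)} \neq a_{i(j+1)}$, and let $p^{(1)} := p''$ be the partition constructed in Lemma \ref{LemReduzieren}(b), so that all exponents are reduced modulo $2$ to lie in $\{0,1\}$. Since $\cC$ is simplifiable, we have $\primarypart \in \cC$, and Lemma \ref{LemReduzieren}(b) yields
\[
p \in \cC \quad \Longleftrightarrow \quad p^{(1)} \in \cC \eqstop
\]
If $p^{(1)}$ is empty or already in single leg form, we stop. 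Otherwise, after the cancellations performed by the reduction, some previously non-neighbouring blocks may now be adjacent, creating new consecutive points belonging to the same block. In that case we rewrite $p^{(1)}$ in the form $a_{j(1)}^{\ell_1} \dotsm a_{j(m)}^{\ell_m}$ with $a_{j(r)} \neq a_{j(r+1)}$ and apply Lemma \ref{LemReduzieren}(b) again to obtain $p^{(2)}$, with $p^{(1)} \in \cC \iff p^{(2)} \in \cC$.

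Iterating this procedure yields a sequence $p, p^{(1)}, p^{(2)}, \dotsc$ whose lengths are strictly decreasing whenever a reduction actually removes points; hence the process terminates after finitely many steps in a partition $q$ which is either in single leg form or empty. By construction, $q$ is precisely the simplified partition associated to $p$, and chaining the equivalences we get
\[
p \in \cC \quad \Longleftrightarrow \quad p^{(1)} \in \cC \quad \Longleftrightarrow \quad \dotsc \quad \Longleftrightarrow \quad q \in \cC \eqstop
\]

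The only point that needs care is the degenerate case $q = \emptyset$. Here we have to observe that the empty partition lies in every category of partitions by convention, and that the reduction step from $p^{(i)}$ to $p^{(i+1)} = \emptyset$ is still covered by Lemma \ref{LemReduzieren}(b), which explicitly allows $p'' = \emptyset$. Thus the equivalence holds in this case as well, and the lemma follows. The main (but mild) obstacle is bookkeeping: ensuring that after each reduction one correctly re-groups consecutive equal letters before applying the next reduction, so that the iteration is well-defined and terminates in the advertised single leg form.
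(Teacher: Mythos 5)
Your proposal is correct and takes essentially the same route as the paper: the paper defines the simplified partition precisely as the result of finitely iterating the reduction of Lemma \ref{LemReduzieren}(b) and then states Lemma \ref{LemmaX} as an immediate consequence, which is exactly the chaining of equivalences you spell out (including the degenerate case $q=\emptyset$). The only cosmetic point is that for partitions with upper points one first rotates to $P(0,l)$, as noted in the remark following Lemma \ref{LemReduzieren}.
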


\begin{remark}\label{RemY}
 Every partition $p\in P$ is equivalent to its simplified partition $p''\in P$ in single leg form and two partitions are equivalent if and only if their simplified partitions agree.  If $p\in P$ is in single leg form, then the simplified partition associated to $p$ is $p$ itself. 
\end{remark}

The set $\mathcal C_{sl}$ turns out to be a complete invariant for the simplifiable hyperoctahedral categories.

\begin{proposition}\label{PropCSL}
 Let $\mathcal C$ and $\mathcal D$ be simplifiable hyperoctahedral categories.
\begin{itemize}
 \item[(a)] The category $\langle\mathcal C_{sl},\primarypart\rangle$ coincides with $\mathcal C$.
 \item[(b)] We have $\mathcal C_{sl}=\mathcal D_{sl}$ if and only if $\mathcal C=\mathcal D$.
 \item[(c)] We have $\mathcal C_{sl}\subseteq\mathcal D_{sl}$ if and only if $\mathcal C\subseteq\mathcal D$.
\end{itemize}
\end{proposition}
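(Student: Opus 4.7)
The plan is to prove (a) first and deduce (b) and (c) as formal consequences.

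For (a), the inclusion $\langle \mathcal C_{sl}, \primarypart \rangle \subseteq \mathcal C$ is immediate, since $\mathcal C_{sl} \subseteq \mathcal C$ by definition and $\primarypart \in \mathcal C$ by the simplifiability assumption. For the converse inclusion, I would take an arbitrary $p \in \mathcal C$ and consider its simplified partition $p''$, which (by Remark \ref{RemY} and the discussion preceding Lemma \ref{LemmaX}) is either the empty partition or lies in $P_{sl}$. By Lemma \ref{LemmaX}, $p'' \in \mathcal C$ as well, so $p'' \in \mathcal C_{sl}$ (the empty partition case being trivial, as $\emptyset$ belongs to every category). It then suffices to verify that $p \in \langle p'', \primarypart \rangle$.

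The heart of the argument is this last claim, and it is essentially a careful bookkeeping of the proof of Lemma \ref{LemReduzieren}(b). To pass from $p''$ back to $p$, one iteratively inserts tensor factors $\paarpart$ at prescribed positions and either leaves them as a new block or merges them with an existing block of the current partition. The merging step is exactly the content of Lemma \ref{LemBlockVerbinden}(b) for non-adjacent blocks (which uses $\primarypart$) and of Lemma \ref{LemBlockVerbinden}(a) for adjacent blocks (which uses $\vierpart$). Since $\vierpart \in \langle \primarypart \rangle$ by Lemma \ref{LemBezBasePart}(i)(ii), every step of this reconstruction takes place inside $\langle p'', \primarypart \rangle$. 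Iterating until all even runs of letters in the original word $p$ are recovered shows $p \in \langle p'', \primarypart \rangle \subseteq \langle \mathcal C_{sl}, \primarypart \rangle$, finishing (a).

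The main (minor) obstacle is to make sure the reconstruction really terminates and stays inside the generated category; this is handled by noting that at each step the number of points not yet in the correct block strictly decreases, and the elementary category operations (tensor, composition, rotation) together with the generators $\paarpart$, $\idpart$, and $\primarypart$ suffice to carry out the move-and-connect routine.

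Finally, (b) and (c) are formal consequences of (a). For (c), if $\mathcal C \subseteq \mathcal D$, clearly $\mathcal C_{sl} \subseteq \mathcal D_{sl}$. Conversely, if $\mathcal C_{sl} \subseteq \mathcal D_{sl}$, then applying (a) twice gives
\[
\mathcal C = \langle \mathcal C_{sl}, \primarypart \rangle \subseteq \langle \mathcal D_{sl}, \primarypart \rangle = \mathcal D \eqstop
\]
Part (b) is just (c) applied in both directions.
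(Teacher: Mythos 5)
Your proof is correct and follows essentially the same route as the paper: the paper also establishes only (a), by passing to the simplified partition $p''$ and invoking Lemma \ref{LemmaX}. The one difference is cosmetic: where you re-trace the reconstruction of $p$ from $p''$ inside $\langle p'',\primarypart\rangle$, the paper simply observes that $\langle\mathcal C_{sl},\primarypart\rangle$ is itself a simplifiable hyperoctahedral category containing $p''$, so Lemma \ref{LemmaX} applied to \emph{that} category yields $p\in\langle\mathcal C_{sl},\primarypart\rangle$ in one line.
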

\begin{proof}
 (a) We have  $\langle\mathcal C_{sl},\primarypart\rangle\subset\mathcal C$. On the other hand, if $p\in \cC$, we consider its associated simplified partition $p''\in\cC$ by Lemma \ref{LemmaX}. Thus, $p''\in\langle\mathcal C_{sl},\primarypart\rangle$. Again by Lemma \ref{LemmaX}, we also have $p\in\langle\mathcal C_{sl},\primarypart\rangle$.
\end{proof}

As a consequence, we can choose the generators of a simplifiable hyperoctahedral category always to be in single leg form.
In the sequel, we will classify the subclass of simplifiable hyperoctahedral easy quantum groups by classifying the sets $\mathcal C_{sl}$.


\section{A group theoretic framework for hyperoctahedral categories of partitions}
\label{sec:group-theoretic-framework}

Denote by $L = \{a_1, a_2 ,\dotsc \}$ an infinite countable number of letters.  Let $p$ be a partition with $n$ blocks and choose a labelling $l = (a_{i(1)}, a_{i(2)}, \dotsc, a_{i(n)})$ of the blocks of $p$ with pairwise different letters.  Denote by $w(p,l)$ the word of $\freegrp{L}$ obtained by considering $p$ as a word with letters given by $l$ starting in the top left corner of $p$ and going around clockwise. Note that mutually different blocks are labelled by mutually different letters. We write $G = \ZZ_2^{* L}$ for the infinite free product of the cyclic group of order $2$ indexed by the letters in $L$.  The canonical surjection $\freegrp{L} \thra G$ is denoted by $\pi$.

The next observation describes the basic link between partitions and elements of $G$.

\begin{lemma}
\label{lem:characterization-equivalence}
 \begin{itemize}
  \item[(i)] Two partitions $p$ and $q$ are equivalent, if and only if $\pi(w(p,l)) = \pi(w(q,l'))$ for some labellings $l$ and $l'$.
  \item[(ii)] Let $\cC$ be a simplifiable hyperoctahedral category of partitions, and let $p\in\cC$ with $\pi(w(p,l))\neq e$ (where $e$ denotes the neutral element in $G$). Then there is a partition $\emptyset\neq q\in\cC$ in single leg form and a labelling $l'$ such that $\pi(w(q,l'))=\pi(w(p,l))$.
 \end{itemize}
\end{lemma}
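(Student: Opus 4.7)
The plan is to prove each part by reducing to the single leg form and invoking the normal form theorem for the free product $G = \ZZ_2^{*L}$.

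For the forward direction of (i), I would show that each elementary equivalence operation (eliminating or inserting two consecutive same-block points) preserves the image under $\pi$, provided we choose compatible labellings. If $q$ is obtained from $p$ by deleting two consecutive points in a block $b$: either $b$ persists in $q$, in which case we re-use the same letter for $b$ (and the word $w(q,l')$ differs from $w(p,l)$ by the deletion of a factor $a_b a_b$), or $b$ is destroyed, in which case we drop $a_b$ from the labelling entirely (again deleting a factor $a_b a_b$ from the word). Symmetrically, an insertion either adds a factor $a_b a_b$ for an existing letter or a factor $a_{\text{new}} a_{\text{new}}$ for a fresh letter. Since $a^2 = e$ in $\ZZ_2^{*L}$, the element $\pi(w(\cdot,\cdot))$ is unchanged. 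Iterating along a chain of elementary operations, we see that $\pi(w(p,l)) = \pi(w(q,l'))$ for appropriately matched labellings.

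For the backward direction of (i), I would pass to the simplified partitions $p''$ and $q''$ associated to $p$ and $q$ respectively, which lie in single leg form by construction and satisfy $p \sim p''$ and $q \sim q''$ by Remark \ref{RemY}. Using the forward direction, there are labellings $l''$ of $p''$ and $l'''$ of $q''$ with $\pi(w(p'',l'')) = \pi(w(p,l))$ and $\pi(w(q'',l''')) = \pi(w(q,l'))$. The hypothesis then gives $\pi(w(p'',l'')) = \pi(w(q'',l'''))$. Because $p''$ and $q''$ are in single leg form, neither word contains two consecutive occurrences of the same letter, so both words are reduced words in the free product $G = \ZZ_2^{*L}$. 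The key input is the normal form theorem for free products: each element of $G$ has a unique representation as a reduced word. Hence $w(p'',l'')$ and $w(q'',l''')$ coincide letter by letter, which forces the block structures of $p''$ and $q''$ to match, so $p'' = q''$ as (unlabelled) partitions. Therefore $p \sim p'' = q'' \sim q$. (The edge case $\pi(w(p,l)) = e$ forces the reduced word to be empty, so $p'' = \emptyset = q''$, and $p \sim \emptyset \sim q$.)

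For part (ii), I would simply take $q := p''$, the simplified partition associated to $p$. By construction, $q$ is in single leg form, and by Lemma \ref{LemmaX} we have $q \in \cC$. The forward direction of part (i) supplies a labelling $l'$ with $\pi(w(q,l')) = \pi(w(p,l))$, and the assumption $\pi(w(p,l)) \neq e$ rules out $q = \emptyset$.

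The main obstacle is the bookkeeping of labellings in the forward direction of (i) — one must verify that insertions of new blocks can always be accommodated by extending the labelling with a fresh letter and that deletions which destroy a block can be accommodated by dropping a letter, and that in either situation $\pi$ is unaffected. The rest is a clean application of the normal form theorem in $\ZZ_2^{*L}$.
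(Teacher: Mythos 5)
Your proof is correct and follows essentially the same route as the paper: the paper's (very terse) argument for (i) likewise rests on the correspondence between the partition-equivalence moves and insertion/deletion of squares $a_b a_b$ in words, together with the standard description of $\ker\pi$ for $\ZZ_2^{*L}$, and its proof of (ii) is exactly your reduction to the simplified partition via Lemma \ref{LemmaX} and Remark \ref{RemY}. Your backward direction of (i), which passes to the simplified partitions and invokes uniqueness of reduced words in the free product, is a slightly more explicit packaging of the same fact the paper cites in one sentence, and it fills in the details cleanly.
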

\begin{proof}
  (i) This follows from the fact, that two words $w$ and $v$ in $\freegrp{L}$ have the same image under $\pi$ if and only if there is a sequence $w_1, \dotsc, w_n$ with $w_1=w$ and $w_n=v$ such that $w_{i +1}$ arises from $w_{i}$ by inserting or deleting a square of a letter in $L$.

  (ii) The simplified partition $p''$ associated to $p$ is in $\cC$ by Lemma \ref{LemmaX}. By (i) and Remark \ref{RemY} we get the result. Note that $p''\neq \emptyset$ since $\pi(w(p,l))\neq e$.
\end{proof}

\begin{definition}
  Let $\cC$ be a simplifiable hyperoctahedral category of partitions.  We denote by $F(\cC)$ the subset of $G$ formed by all elements $\pi(w(p,l))$ where $p \in \cC$ and $l$ runs through all possible labelling of $p$ with letters $a_1, a_2, \dotsc$.
\end{definition}

Denote by $\cP(X)$ the power set of a set $X$.  We consider the commutative diagram
\begin{center}
\begin{picture}(14,3)
  \put(0,3){$\cP(\F_L)$}
  \put(6.5,3){$\stackrel{\tilde\pi}{\longrightarrow}$}
  \put(12,3){$\cP(G)$}    

  \put(0,1.5){$w \uparrow$}
  \put(12,1.5){$w' \uparrow$}

  \put(0,0){$\cP(P)$}
  \put(6.5,0){$\stackrel{R}{\longrightarrow}$}
  \put(12,0){$\cP(P_{sl})$}
  
  \put(14.25,0){,}
\end{picture}
\end{center}
where the maps are given as follows:
\begin{itemize}
 \item The map $\tilde\pi$ is induced by the group homomorphism $\pi:\F_L\to G$, so $\tilde\pi(A):=\{\pi(x)\in G|x\in A\}$ for $A\subset\F_L$.
 \item The map $w$ is given by the above labelling of a partition with any possible choice of letters in $\F_L$, thus for a subset $A\subset P$ of partitions, we have
 \[
   w(A)
   =
   \{w(p,l)\in\F_L|p\in A, l = (a_{i(1)}, a_{i(2)}, \dotsc, a_{i(n)}) \text{ a labelling with pairwise different letters} \}
   \eqstop
 \]
 Note that we only use the generators $a_i$ of $\F_L$ as letters and not their inverses $a_i^{-1}$.
 \item The map $R$ is given by simplification of partitions. To a partition $p\in P$, we assign its simplified partition $p''\in P_{sl}$, which is possibly the empty partition. Hence
   \[
     R(A)
     =
     \{p''\in P_{sl}\;|\; p''\text{ is the simplified partition of a partition } p\in A\}
     \eqstop
   \]
 If $\cC$ is a simplifiable hyperoctahedral category, then $R(\cC)=\cC_{sl}$.
 \item The map $w'$ is given by the labelling of partitions $p$ in single leg form with any possible choice of letters in $G$, analogous to the map $w$.
\end{itemize}

We observe, that the procedure $R$ of simplifying partitions to single leg partitions corresponds to the group homomorphism $\pi$, resp. to $\tilde \pi$. Furthermore, if $\cC$ is a simplifiable hyperoctahedral category and $\pi(w(p,l))\neq e$ is an element in $\tilde\pi\circ w(\cC)$ for some partition $p\in \cC$ with some labelling $l$, we may always assume that $p$ is in single leg form. (See Lemma \ref{lem:characterization-equivalence})

We are going to study the structure of $(\tilde \pi \circ w)(\cC)$ for a simplifiable hyperoctahedral category of partitions $\cC$.  For this, we translate the category operations to operations in $\F_L$ resp. in $G$.

\begin{lemma}\label{LemFCGruppe}
 Let $\cC$ be a simplifiable hyperoctahedral category of partitions. Then:
\begin{itemize}
 \item[(i)] If the word $g=b_1\ldots b_n$ is in $w(\cC)$, then the reverse word $g'=b_n\ldots b_1$ is in $w(\cC)$. 
 \item[(ii)] If $g,h\in w(\cC)$, then $gh\in w(\cC)$.
 \item[(iii)] $(\tilde \pi \circ w)(\cC) \subset G$ is a subgroup of $G$.
\end{itemize}
\end{lemma}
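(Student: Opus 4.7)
I would prove (i) and (ii) independently, then deduce (iii) as a formal consequence of both.

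For (i), the natural move is to apply the involution $p \mapsto p^*$, which swaps the upper and lower rows of a partition. Inspecting the clockwise walk convention directly -- upper points left to right, then lower points right to left -- one checks that the walk of $p^*$ with the induced labelling reads the labels of $p$ in the reverse order, so $w(p^*, l^*) = b_n \cdots b_1$. Categories of partitions are closed under involution, hence $p^* \in \cC$ and the reverse word lies in $w(\cC)$.

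For (ii), my target is a partition $r \in \cC$ with $w(r, l'') = gh$. The cleanest candidate is $r = \tilde p \otimes \tilde q$, where $\tilde p \in \cC \cap P(|g|, 0)$ has all its points on the upper row with walk exactly $g$, and $\tilde q \in \cC \cap P(0, |h|)$ has all its points on the lower row with walk exactly $h$. For such a tensor the clockwise walk is the concatenation of the upper row read left to right and the lower row read right to left, which equals $g \cdot h = gh$. The delicate point is to realise $\tilde p$ and $\tilde q$ in $\cC$ with the prescribed walks: starting from an arbitrary $p \in \cC$ with walk $g$ and rotating all points to the upper row produces a partition in $\cC \cap P(|g|, 0)$, but only with walk equal to a cyclic shift of $g$, not $g$ itself. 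To force the walk to be $g$ exactly, I would invoke the simplifiable hyperoctahedral hypothesis: the pair positioner $\primarypart$ lies in $\cC$, so via Lemma~\ref{LemBlockVerbinden}(b) and the simplification/equivalence machinery of Lemma~\ref{LemmaX} and Definition~\ref{def:equivalence-of-partition}, one has enough freedom to rearrange blocks so that the walk takes any prescribed form with the given block multiset. This is where I expect the main obstacle to lie.

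Finally, (iii) is a formal consequence of (i) and (ii). The multiplicative closure of $(\tilde\pi \circ w)(\cC)$ follows from (ii) via $\pi(g)\pi(h) = \pi(gh)$. Closure under inversion follows from (i) together with the observation that each generator of $G = \ZZ_2^{* L}$ squares to the identity, so $\pi(g)^{-1} = \pi(g^{\mathrm{rev}})$ lies in $(\tilde\pi \circ w)(\cC)$. The neutral element is present because the empty partition $\emptyset \in \cC$ contributes the empty word, whose $\pi$-image is $e$.
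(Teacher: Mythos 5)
Parts (i) and (iii) are fine and follow the paper's own route: (i) via the involution $p\mapsto p^*$ with the reversed labelling, and (iii) as a formal consequence of (i), (ii) and the fact that every letter of $\ZZ_2^{*L}$ is its own inverse. The problem is in (ii), where you have misidentified where the difficulty lies. The cyclic-shift worry is a non-issue: rotating a point from the lower-right to the upper-right position (or back) leaves the clockwise walk word unchanged, since the rightmost lower point is read immediately after the last upper point; only rotations at the \emph{left}-hand side conjugate the word. So one may rotate all lower points of $p$ and of $q$ to the upper row and obtain partitions on one row whose walks are exactly $g$ and $h$ -- this is precisely the first line of the paper's argument, and it needs neither $\primarypart$ nor the equivalence machinery. (Invoking Lemma \ref{LemmaX} or Definition \ref{def:equivalence-of-partition} here would in any case be the wrong tool: equivalence only preserves the image under $\pi$ in $\ZZ_2^{*L}$, whereas parts (i) and (ii) are assertions about $w(\cC)\subset \freegrp{L}$ itself.)

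The genuine gap is the case where $g$ and $h$ share letters. By definition, a word lies in $w(\cC)$ only if it is $w(r,l'')$ for a labelling $l''$ assigning pairwise \emph{different} letters to the blocks of $r$; hence all occurrences of a given letter in $gh$ must come from a single block of the witnessing partition. If $g$ and $h$ both use the letter $a_1$, your candidate $\tilde p\otimes\tilde q$ has two distinct blocks that would have to carry the label $a_1$, so it is not a valid witness. The paper resolves this by joining, for every pair of blocks of $p$ and $q$ carrying the same letter, the corresponding blocks of $p\otimes q$; this uses Lemma \ref{LemBlockVerbinden}(b), i.e.\ exactly the simplifiable hypothesis that $\primarypart\in\cC$, which allows arbitrary (not merely neighbouring) blocks to be connected inside $\cC$. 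This case cannot be dismissed: closure of $(\tilde\pi\circ w)(\cC)$ under products of elements sharing letters (e.g.\ $\pi(g)\pi(g)$) is needed for (iii), and relabelling $h$ to avoid collisions would change the group element. Your proof as written only covers the disjoint-letter case, where the statement holds for any category and the simplifiable hypothesis is never used -- a sign that the essential step is missing.
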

\begin{proof} 
(i) Let $g=w(p,l)$ for some partition $p\in\cC$ and some labelling $l$. Thus, $w(p,l)$ is the word given by labelling the partition $p$ starting in the top left corner and going around clockwise. Since $\cC$ is a category, the partition $p^*$ is in $\cC$, given by turning $p$ upside down. Labelling $p^*$ with the letters from $l$ starting in the lower left corner and going counterclockwise yields the reverse word $g'=w(p^*,l^*)$. Here, $l^*=(a_{i(n)},  \dotsc, a_{i(1)})$ denotes the labelling of a partition in an order reverse to the one of the labelling $l$. Thus, $g'\in w(\cC)$.

(ii) Let $g=w(p,l)$ and $h=w(q,l')$, where $p,q\in \cC$, $l = (a_{i(1)}, \dotsc, a_{i(n)})$, and $l' = (a_{j(1)}, \dotsc a_{j(m)})$. By rotation, we may assume that $p$ and $q$ are partitions with no lower points. If all letters of $l$ and $l'$ are pairwise different, then $gh=w(p\otimes q,ll')$, where $ll'$ is the labelling $ll' = (a_{i(1)}, \dotsc, a_{i(n)},a_{j(1)}, \dotsc a_{j(m)})$. Otherwise, denote by $M$ the set of all pairs $(\alpha, \beta)$ in $\{1, \dotsc n\} \times \{1, \dotsc m\}$ such that $i(\alpha) = j(\beta)$. Then $gh$ is obtained from the labelled partition that is constructed by the following:
\begin{itemize}
  \item Consider the tensor product $p\otimes q$,
  \item label this partition with the letters $a_{i(1)}, \dotsc a_{i(k)}, a_{j(1)}, \dotsc ,a_{j(l)}$,
  \item now for every $(\alpha, \beta) \in M$ join the $\alpha$-th block of $p$ with the $\beta$-th block of $q$.
\end{itemize}
The resulting partition $r$ is in $\cC$  (by Lemma \ref{LemBlockVerbinden}(b)) and $gh=w(r,l'')$ with the above labelling $l''$.

(iii) If $\pi(g)\in \tilde\pi\circ w(\cC)$ for $g\in w(\cC)$, then $\pi(g)^{-1}=\pi(g')\in \tilde\pi\circ w(\cC)$ by (i). By (ii) $\tilde\pi\circ w(\cC)$ is closed under taking products.
\end{proof}

\begin{definition}
\label{def:F}
We denote by $F$ the restriction of $\tilde \pi \circ w$ to the set of all simplifiable hyperoctahedral categories of partitions as a map with image in the subgroups of $G$.
\end{definition}

This map $F$ transfers the problem of classifying the simplifiable hyperoctahedral categories of partitions to a problem in group theory.

\subsection{The correspondence between simplifiable hyperoctahedral categories and subgroups of $\ZZ_2^{* \infty}$}
\label{sec:correspondence-G}

We will give a description of the image of $F$ in terms of subgroups of $G = \ZZ_2^{*L}$ that are invariant under certain endomorphisms. This is the content of Theorem \ref{thm:F-is-1-1}.  Let us prepare its formulation.

\begin{definition}
\label{def:endomorphisms-S_0}
 Let $S_0$ be the subsemigroup of $\End(G)$ generated by the following endomorphisms.
\begin{enumerate}
\item Finite identifications of letters, i.e. for any $n \in \NN$ and any choice of indices $i(1), \dotsc, i(n)$ the map
  \[\begin{cases}
     a_k \mapsto a_{i(k)}       & 1 \leq k \leq n \eqcomma \\
     a_k \mapsto a_k           & k > n \eqstop
     \end{cases}
  \]
\item Conjugation by any letter $a_k$, i.e. the map $w \mapsto a_k \cdot w \cdot a_k$.
\end{enumerate}
\end{definition}

\begin{definition}
  Denote by $E$ the subgroup of $G$ consisting of all words of even length.
\end{definition}

Proper $S_0$-invariant subgroups of $G$ and $E$ are described in the following lemma.

\begin{lemma}
\label{lem:proper-subgroups}
 $E$ is the unique maximal proper $S_0$-invariant subgroup of $G$. Furthermore, every proper $S_0$-invariant subgroup of $E$ contains only words in which every letter $a_1, a_2, \dotsc$ appears not at all or at least twice. (Note that in $E$ itself, there are words where a letter appears only once.)
\end{lemma}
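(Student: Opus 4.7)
My plan is to realise $E$ as the kernel of the parity homomorphism $\phi \colon G \to \ZZ_2$ sending every generator $a_i$ to the nontrivial element. Both types of generators of $S_0$ clearly preserve $\phi$ (identifications send generators to generators, and conjugation by $a_k$ changes $\phi$ by $1 + 1 = 0$), so $E$ is $S_0$-invariant. For the maximality claim I would take any $S_0$-invariant $H$ with $H \not\subset E$, pick $w \in H$ of odd length, and apply the identification sending every letter appearing in $w$ to $a_1$; this produces $a_1^{|w|} = a_1 \in H$. The identifications $a_1 \mapsto a_k$ (fixing the other generators) then force every $a_k$ into $H$, giving $H = G$ and proving that $E$ is the unique maximal proper $S_0$-invariant subgroup.

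For the second assertion I would argue by contradiction: assume $H \leq E$ is a proper $S_0$-invariant subgroup containing some $w$ in which a letter appears exactly once. After using a transposition of two of the first $n$ letters (a valid identification in $S_0$) I may assume the unique letter is $a_1$, so I can write $w = u_1 a_1 u_2$ with $u_1, u_2$ not involving $a_1$. Applying the identification that fixes $a_1$ and sends every other letter among $a_2, \ldots, a_n$ to $a_2$, the image in $G$ reduces to $a_2^{\epsilon_1} a_1 a_2^{\epsilon_2}$ with $\epsilon_1, \epsilon_2 \in \{0, 1\}$, since $a_2^2 = e$ can only cancel pairs of neighbouring $a_2$'s and cannot annihilate the lone $a_1$. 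The parity of $|w|$ then forces the image to be $a_1 a_2$ or $a_2 a_1 = (a_1 a_2)^{-1}$, so $a_1 a_2 \in H$.

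To finish I would use the identifications $a_1 \mapsto a_i$, $a_2 \mapsto a_j$ (for any $i \neq j$, fixing the other letters) to conclude $a_i a_j \in H$ for every $i \neq j$. Since every element $a_{i_1} a_{i_2} \cdots a_{i_{2k}}$ of $E$ factors as $(a_{i_1}a_{i_2})(a_{i_3}a_{i_4})\cdots(a_{i_{2k-1}}a_{i_{2k}})$, a product of such generators, we obtain $E \subset H$, contradicting properness. I expect the only mildly delicate step to be the precise bookkeeping of how the identification $a_k \mapsto a_2$ acts on the reduced form of $w$; but since $a_2^2 = e$ only deletes pairs of neighbouring $a_2$'s without disturbing the single $a_1$, this is routine once set up carefully, and the rest of the argument amounts to choosing the right endomorphisms in $S_0$.
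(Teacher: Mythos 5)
Your proof is correct and follows essentially the same route as the paper: show that any $S_0$-invariant subgroup containing an odd-length word is all of $G$ (via identification of all letters to $a_1$), and that any $S_0$-invariant subgroup of $E$ containing a word with a letter occurring only once contains $a_1a_2$, hence all products $a_ia_j$, hence all of $E$. The only cosmetic difference is that you exhibit $E$ as the kernel of the parity homomorphism where the paper simply notes that $E$ has index $2$; the substance of the argument is identical.
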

\begin{proof}
   Firstly, note that $E$ has index $2$ in $G$, so it is a maximal proper subgroup of $G$. Secondly, it is $S_0$-invariant. Now, if an $S_0$-invariant subgroup $H \leq G$ contains a word with an odd number of letters, say $2n + 1$, we may use the identification of letters from Definition \ref{def:endomorphisms-S_0}(i) in order to obtain $a_1 = a_1^{2n + 1} \in H$.  With $a_1 \in H$, it follows that $a_i \in H$ for all $i$ and hence $H = G$.

Let $H \leq E$ be an $S_0$-invariant subgroup and assume that there exists an element $w \in H$ where $w$ contains a letter $a_i$ only once.  Using identification of letters, we may assume that $i = 1$ and all other letters are the same, say $a_2$.  We obtain $a_1a_2 \in H$ or $a_2a_1 \in H$, thus $a_ia_j\in H$ for all $i,j$.  Now let $w \in E$ be arbitrary.  We can write
\[w = a_{i(1)}a_{i(2)} \dotsm a_{i(2n)} = (a_{i(1)}a_{i(2)})(a_{i(3)}a_{i(4)}) \dotsm (a_{i(2n-1)}a_{i(2n)}) \in H \eqcomma\]
for some indices $i(1), i(2), \dotsc, i(2n)$.  So $H = E$ and we have finished the proof.
\end{proof}

\begin{lemma}
\label{lem:F-defines-invariant-group}  
  For any simplifiable hyperoctahedral category of partitions $\cC$, $F(\cC)$ is a proper $S_0$-invariant subgroup of $E$.  

So $F$ is a well-defined map from simplifiable hyperoctahedral categories of partitions to proper $S_0$-invariant subgroups of $E$.  Moreover, $F$ is a lattice homomorphism.
\end{lemma}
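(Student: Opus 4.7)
My first goal is to verify that $F(\cC) \leq E$ and that the inclusion is strict.  Given $p \in \cC$ with any labelling $l$, the simplified partition $p''$ lies in $\cC$ by Lemma \ref{LemmaX} and, if nonempty, is in single-leg form, while by Lemma \ref{lem:characterization-equivalence}(i) one has $\pi(w(p,l)) = \pi(w(p'', l''))$ for an appropriate labelling $l''$.  Since $p''$ is in single-leg form, no two consecutive points lie in the same block, so $w(p'', l'')$ is already reduced as an element of $G$; by Lemma \ref{LemLetterTwice}, every letter appearing therein corresponds to a block of size at least two and the total length is even.  Hence $F(\cC) \subseteq E$ and no element of $F(\cC)$ contains a letter occurring only once.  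Since $E$ itself does contain such words (e.g.\ $a_1 a_2$), Lemma \ref{lem:proper-subgroups} forces $F(\cC) \subsetneq E$, and the subgroup property comes from Lemma \ref{LemFCGruppe}(iii).

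For $S_0$-invariance I would check closure under the two types of generators separately.  For an identification of letters $\phi$, the element $\phi(\pi(w(p, l)))$ is realised by relabelling $p$ so that certain blocks receive the same letter; this labelled diagram may be replaced by a partition $p' \in \cC$ obtained from $p$ by fusing those blocks, and such fusion is legal inside $\cC$ by Lemma \ref{LemBlockVerbinden}(b).  For conjugation $g \mapsto a_k g a_k$, I start from $p \otimes \paarpart \in \cC$ and apply the rotation axiom of a category twice (first moving the rightmost leg of the pair into the upper row, then back down to the left end) to produce $q \in \cC$ whose first and last points form a single pair block enclosing a copy of $p$.  Labelling that pair block by a fresh letter $a_N$ yields $a_N g^{\pm 1} a_N \in F(\cC)$ (the sign depending on the reading convention), and the desired element $a_k g a_k$ is obtained by combining the subgroup closure under inversion with the identification of letters $a_N \mapsto a_k$, whose legality was just established.

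Finally, the lattice-homomorphism claim splits into meets and joins.  Monotonicity is immediate from the definition.  For meets, any non-trivial $g \in F(\cC) \cap F(\cD)$ is representable by single-leg partitions $q_1 \in \cC$ and $q_2 \in \cD$ via Lemma \ref{lem:characterization-equivalence}(ii); these are equivalent by Lemma \ref{lem:characterization-equivalence}(i), and since a single-leg partition coincides with its own simplified partition by Remark \ref{RemY}, we conclude $q_1 = q_2 \in \cC \cap \cD$, giving $g \in F(\cC \cap \cD)$.  For joins, the inclusion $F(\cC) \vee F(\cD) \subseteq F(\cC \vee \cD)$ is immediate from monotonicity.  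The reverse inclusion, which I expect to be the main obstacle, proceeds by induction on the complexity of a partition $p \in \cC \vee \cD$ built from $\cC \cup \cD$ by category operations: tensor product translates to group multiplication, involution to inversion, rotation to conjugation by a letter (an $S_0$ operation), and composition to multiplication together with an identification of labels along the glued middle row (after which the middle sub-word of the form $(a_{i_1} \cdots a_{i_l})^{\mathrm{rev}} (a_{i_1} \cdots a_{i_l})$ collapses to the identity in $G = \ZZ_2^{*L}$); all of these manipulations keep the resulting element of $G$ inside the $S_0$-invariant subgroup generated by $F(\cC) \cup F(\cD)$.
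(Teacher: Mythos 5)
Your proposal is correct, and for the core of the lemma (containment in $E$, properness, and invariance under identification of letters) it follows the same route as the paper: even length of single-leg representatives gives $F(\cC)\subseteq E$, the absence of words with a letter occurring exactly once gives properness, and Lemma \ref{LemBlockVerbinden}(b) handles identification of letters. You deviate in two places, both to your advantage. First, for closure under conjugation $g\mapsto a_k g a_k$ the paper distinguishes whether $a_k$ occurs in $g$ and, if so, splits into four sub-cases according to whether $a_k$ is the first and/or last letter, each handled by a different diagrammatic manipulation; your trick of always nesting $p$ into a pair labelled by a \emph{fresh} letter $a_N$ to get $a_N g a_N\in F(\cC)$ and then applying the already-established closure under the identification $a_N\mapsto a_k$ (which fixes the letters of $g$) collapses all of these cases into one and is a genuine simplification. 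Second, the paper disposes of the lattice-homomorphism claim with the single remark that $\tilde\pi\circ w$ preserves inclusions -- which by itself only yields monotonicity, the preservation of meets and joins being recoverable only later once Theorem \ref{thm:F-is-1-1} shows $F$ is a bijection with monotone inverse. Your direct arguments (uniqueness of the single-leg representative of a group element for meets, and an induction over the category operations for joins, mirroring the proof that $\cC_H$ is a category) fill this in at the point where it is claimed; the join induction is only sketched and would need the same care with shared letters and with conjugation by the "upper half" of a word as in Lemma \ref{LemFCGruppe}(ii), but the outline is sound.
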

\begin{proof}
By Lemma \ref{LemFCGruppe} $F(\cC)$ is a subgroup of $G$.  Since all partitions in $\cC$ are of even length, $F(\cC)$ is a subgroup of $E$.  

Assume that $F(\cC)=E$.  Then $F(\cC)$ contains an element in which some letter appears only once.  Hence, in $\cC$ there is a partition with a singleton. Thus $\cC$ is not hyperoctahedral, which is a contradiction.  We have shown that $F(\cC)$ is a proper subgroup of $E$.

We show that $F(\cC)$ is invariant under the generating endomorphisms of $S_0$ in Definition \ref{def:endomorphisms-S_0}.  Let $g = \pi(w(p,l))$ be an element in $F(\cC)$ constructed from a partition $p \in \cC$.  It is clear that we can change a letter in $g$, if the new letter did not appear in $g$ before -- this simply corresponds to $\pi(w(p,l'))$ with a different labelling $l'$.  If the new letter already appeared in $g$, we connect two blocks of $p$ using Lemma \ref{LemBlockVerbinden}.  This shows that $F(\cC)$ is closed under identification of letters. 

Furthermore, $F(\cC)$ is closed under conjugation with a letter $a_k$.  Indeed, let $e \neq g = \pi(w(p,l)) = a_{i(1)} \dotsc a_{i(m)}$ be an element in $F(\cC)$.  Assume that $p$ is a partition in single leg form with no lower points (see Lemma \ref{lem:characterization-equivalence}).
 If the letter $a_k$ does not appear in the word $a_{i(1)} \dotsc a_{i(m)}$, we consider the partition
\begin{center}
\begin{picture}(4,3)
  \put(0,0.75){$p' =$}
  \put(1,0){$\uppartii{2}{1}{4}$}
  \put(3.25,0.5){$p$}

  \put(5.25,0.25){,}
\end{picture}
\end{center}
i.e. the partition obtained from $p$ by nesting it into a pair partition $\paarpart$. Labelling this partition with $l'=(a_k,a_{j(1)},\ldots, a_{j(n)})$ for $l=(a_{j(1)},\ldots, a_{j(n)})$ yields $a_k g a_k = \pi(w(p',l'))$ in $F(\cC)$. On the other hand, if the letter $a_k$ appears in the word $a_{i(1)} \dotsc a_{i(m)}$, we have four cases.
\begin{itemize}
 \item If $i(1) \neq k$ and $i(m)\neq k$, we connect the outer pair partition of $p'$ with the block of $p$ which corresponds to the letter $a_k$ (see Lemma \ref{LemBlockVerbinden}).  The resulting partition $p''$ yields $a_k g a_k = \pi(w(p'',l''))$ in $F(\cC)$ for a suitable labelling $l''$.
 \item If $i(1) \neq k$ and $i(m) = k$, the element $a_k g a_k$ is given by $a_k g a_k = a_{i(m)}a_{i(1)} \dotsc a_{i(m-1)}$ (as $a_k^2=e$).  Therefore, we consider the labelled partition $p$ in a rotated version, which yields $a_k g a_k \in F(\cC)$. Likewise in the case $i(1)= k$ and $i(m) \neq k$.
 \item If $i(1) = k$ and $i(m) = k$, the element $a_k g a_k$ equals $a_{i(2)} \dotsc a_{i(m-1)}$. On the other hand, the very left point and the very right point of $p$ belong to the same block, so, rotating one of them next to the other and erasing them using the pair partition yields a partition $p'' \in \cC$ such that $a_k g a_k = \pi(w(p'',l''))$ for a suitable labelling $l''$.
\end{itemize}
Finally note that, since $\tilde \pi \circ w$  preserves inclusions, its restriction $F$ is a lattice homomorphism.
\end{proof}

The preceding lemma specifies that we can associate an $S_0$-invariant subgroup $F(\cC)$ of $E$ to any simplifiable hyperoctahedral category $\cC$ of partitions -- but we can also go back. In fact, \emph{every} proper $S_0$-invariant subgroup of $E$ comes from such a category. 
This is worked out in the sequel.

\begin{lemma}
For any proper $S_0$-invariant subgroup $H$ of $E$, the set 
\[\cC_H:=w^{-1} ( \pi^{-1}(H))=\{p\in P \amid \textnormal{there is a labelling $l$ such that } \pi(w(p,l))\in H\}\subset P\]
 is a simplifiable hyperoctahedral category of partitions.
\end{lemma}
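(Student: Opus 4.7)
I will verify that $\cC_H$ satisfies the axioms of a category of partitions (closure under tensor product, composition, involution, rotation; containing $\paarpart$ and $\idpart$), contains the pair positioner $\primarypart$, and excludes the double singleton $\singleton \otimes \singleton$. A preliminary observation I will use throughout is that $\pi(w(p,l)) \in H$ is independent of the choice of labelling $l$: any two labellings with pairwise distinct letters differ by a bijective renaming of letters, which is a composition of finite identifications from $S_0$; thus $H$ is invariant under relabelling. With this in hand, the base partitions are immediate: $\paarpart, \idpart, \vierpart, \primarypart$ all admit labellings whose clockwise readings ($aa$, $aa$, $aaaa$, $aabaab$) represent the identity of $G$ and thus lie in $H$.

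For the elementary category operations I will use the decomposition $w(p,l) = T_p \cdot B_p^{\mathrm{rev}}$, with $T_p$ the top row (left to right) and $B_p^{\mathrm{rev}}$ the bottom row (right to left). Involution reverses the word, which in $\ZZ_2^{*L}$ is group inversion, preserved by the subgroup $H$. Rotating a boundary point corresponds to a cyclic shift of the word, hence to conjugation by a single letter in $G$, covered by the $S_0$-invariance of $H$. For the tensor product, using disjoint-letter labellings on $p$ and $q$, one computes $w(p \otimes q) = \pi(T_p) \cdot \pi(w(q)) \cdot \pi(T_p)^{-1} \cdot \pi(w(p))$ in $G$; the factor $\pi(T_p) \cdot \pi(w(q)) \cdot \pi(T_p)^{-1}$ lies in $H$ by iterated conjugation by letters, and the remaining factor $\pi(w(p))$ lies in $H$ by hypothesis.

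The main obstacle is composition. For $qp$ with $p \in P(k,l)$ and $q \in P(l,m)$, choose disjoint labellings $l_p, l_q$, and let $\phi \colon G \to G$ be the finite identification of letters that collapses exactly those letters that are merged through the middle connection (extended by the identity on all other letters). By construction $\phi \in S_0$, so $\phi(\pi(w(p, l_p))), \phi(\pi(w(q, l_q))) \in H$. The key cancellation is the following: writing $B_p = b_1 b_2 \cdots b_l$ and $T_q = t_1 t_2 \cdots t_l$, the identification forces $\phi(b_i) = \phi(t_i) =: c_i$, so
\[
  \phi(B_p^{\mathrm{rev}}) \cdot \phi(T_q) \;=\; c_l c_{l-1} \cdots c_1 \cdot c_1 c_2 \cdots c_l \;=\; e
\]
in $G$ by iterated application of $c_i c_i = e$. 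Consequently,
\[
  \phi(\pi(w(p, l_p))) \cdot \phi(\pi(w(q, l_q))) \;=\; \pi(\phi(T_p) \phi(B_q^{\mathrm{rev}})) \;=\; \pi(w(qp, l_{qp})) \eqcomma
\]
and the left-hand side lies in $H$, so $qp \in \cC_H$.

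Finally, excluding $\singleton \otimes \singleton$: every labelling yields a word of the form $a_i a_j$ with $i \neq j$, in which two distinct letters each appear exactly once. By Lemma \ref{lem:proper-subgroups}, no such element lies in a proper $S_0$-invariant subgroup of $E$, so $\singleton \otimes \singleton \notin \cC_H$. Combined with $\primarypart \in \cC_H$, this completes the verification that $\cC_H$ is a simplifiable hyperoctahedral category of partitions.
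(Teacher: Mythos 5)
Your proof is correct. Most of it runs parallel to the paper's own argument: the base partitions mapping to $e$, involution as word reversal (which is group inversion), rotation as conjugation by a boundary letter, the tensor-product identity $w(p\otimes q)=T_p\,w(q)\,T_p^{-1}\,w(p)$ handled by iterated conjugation by single letters, and the exclusion of $\singleton\otimes\singleton$ via Lemma \ref{lem:proper-subgroups} all appear in the paper in essentially the same form. Where you genuinely diverge is the composition step, which is the hardest part. The paper argues in two stages: it first shows closure under composition with the special partition that connects and caps two adjacent middle points (this corresponds precisely to a finite identification of two letters, a generator of $S_0$), and then reduces a general composition $qp$ to this case by rotating $p$ and $q$ into one-row partitions, tensoring, composing with a nested-pairs partition, and rotating back. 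You replace this combinatorial reduction by a single algebraic identity: after applying the identification $\phi\in S_0$ that merges the blocks connected through the middle row, the middle contributions cancel, $\phi(B_p^{\mathrm{rev}})\,\phi(T_q)=e$ in $\ZZ_2^{*L}$, so that $\phi(\pi(w(p,l_p)))\cdot\phi(\pi(w(q,l_q)))=\pi(w(qp,l_{qp}))\in H$ directly. Your route is shorter and makes transparent \emph{why} composition of partitions corresponds to multiplication in the group; its one delicate point, which you do address, is that $\phi$ must identify \emph{exactly} the letters of blocks that merge in $qp$, so that $\phi(T_p)\,\phi(B_q^{\mathrm{rev}})$ is a legitimate labelling word of $qp$ with distinct letters on distinct blocks (and so that blocks disappearing into the middle row simply contribute nothing). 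The paper's route avoids this bookkeeping by only ever identifying two letters at a time, at the cost of the extra rotation-and-capping machinery.
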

\begin{proof}
The pair partition $\sqcap$, the unit partition $\idpart$, the four block partition $\vierpart$, and the pair positioner partition $\primarypart$ are all in $\cC_H$, since they are mapped to the neutral element $e\in H$ for any labelling $l$. 

Let $p$ and $q$ be partitions in $\cC_H$ and denote by $g:=\pi(w(p,l))$ and $h:=\pi(w(q,l'))$ some corresponding elements in $H$ for some labellings $l$ and $l'$. Since $H$ is invariant under permutation of letters we can assume that the labellings $l$ and $l'$ are such that $g$ and $h$ do not share any letter. The element $g$ may be written as $g=g_1g_2$, where $g_1$ corresponds to the labelling of the upper points of $p$, and $g_2$ to the lower points of $p$.
Consider the tensor product $p\otimes q$ of $p$ and $q$ as labelled partitions, i.e. we form $p\otimes q$ and label it by a labelling $l''$ such that the subpartition $p$ in $p\otimes q$ is labelled by $l$ and the subpartition $q$ is labelled by $l'$. Then, the element $\pi(w(p\otimes q,l''))$ is of the form $g_1hg_2$. (Recall that the labelling procedure starts at the upper left point of a partition and goes around clockwise -- thus, in $p\otimes q$ the upper points of $p$ are labelled first, then the whole of $q$ is labelled, and we finish by labelling the lower points of $p$.) As $H$ is closed under conjugation, the element $g_1hg_1^{-1}$ is in $H$, so is $g_1hg_2=g_1hg_1^{-1}g$. Hence, $p\otimes q\in \cC_H$, and $\cC_H$ is closed under tensor products.

The set $\cC_H$ is also closed under involution, since for $p\in \cC_H$ with $\pi(w(p,l))=g\in H$, we have $\pi(w(p^*,l^*))=g^{-1}\in H$, where $l^*$ denotes the labelling $l$ in reverse order.  It is also closed under rotation, since moving points (from above to below or the converse) at the right hand side of a partition $p$ does not change the labelling -- and hence $\pi(w(p,l))$ is invariant under this operation. Moving points at the left hand side of $p$ is reflected by conjugating $\pi(w(p,l))$ with the first respectively with the last letter of this word.

It remains to show that $\cC_H$ is closed under the composition of partitions. We first show that $\cC_H$ is closed under composition with a partition of the form

\begin{center}
\setlength{\unitlength}{0.5cm}
\begin{picture}(10,2)
  \put(0,0){\line(0,1){2}}
  \put(0.8,0.8){$\dotsm$}
  \put(2,0){\line(0,1){2}}
  \put(3,1){\line(0,1){1}}
  \put(4,1){\line(0,1){1}}
  \put(3,1){\line(1,0){1}}
  \put(5,0){\line(0,1){2}}
  \put(5.8,0.8){$\dotsm$}
  \put(7,0){\line(0,1){2}}

  \put(7.25,0){.}
\end{picture}
\end{center}

Let $p\in\cC_H$ be a partition on $k$ upper points and $m$ lower points and consider the partition $|| \dotsm | \raisebox{0.4ex}[0ex][0ex]{$\sqcup$} || \dotsm |$ on $m$ upper points and $m-2$ lower points, where $\sqcup$ connects the $i$-th and the $(i + 1)$-st point. Denote their composition by $p'$. There is a labelling $l$ such that $g:=\pi(w(p,l))$ is in $H$.  For a suitable labelling $l'$, the element $\pi(w(p',l'))$ arises from $g$ by identifying the $(k+i)$-th and the $(k+i+1)$-st letter.  Since $H$ is invariant under this operation, the partition $p'$ is in $\cC_H$.

It remains to reduce the composition of arbitrary partitions to the previous case. Let $p\in\cC_H$ be a partition on $k$ upper and $l$ lower points, and let $q\in \cC_H$ be on $l$ upper and $m$ lower points. Write $p'$ and $q'$ for the partitions arising from $p$ and $q$, respectively, by rotating their lower points to the right of the upper points. Then $p'$ and $q'$ are both in $\cC_H$.  Composing $p'\otimes q'$ with the partition

\begin{center}
\setlength{\unitlength}{0.5cm}
\begin{picture}(24,4)
  \put(0,0){\line(0,1){4}}
  \put(1.2,2){$\dotsm$}
  \put(3,0){\line(0,1){4}}

  \put(2.8,5){\partii{3}{1}{7}}
  \put(2.8,5){\partii{2}{2}{6}}
  \put(2.8,5){\partii{1}{3}{5}}

  \put(6.8,3.3){$\dotsm$}

  \put(11,0){\line(0,1){4}}
  \put(12.2,2){$\dotsm$}
  \put(14,0){\line(0,1){4}}

  \put(15,2){\begin{minipage}{5cm} $k$ strings on the left, \\
                                   $l$ pair partitions nested into each other, \\
                                   and $m$ strings on the right \end{minipage}}
\end{picture}
\end{center}

yields a partition $p''\in\cC_H$ on $k+m$ points. Rotating $m$ points on the right of $p''$ to below gives the composition $pq$ of $p$ and $q$, which hence is in $\cC_H$.

We conclude that $\cC_H$ is closed under the category operations, hence it is a category of partitions, containing $\primarypart$. On the other hand, the partition $\singleton\otimes\singleton$ is not in $\cC_H$, since $\pi(w(\singleton\otimes\singleton,l))$ is a word of the form $ab$, where $a$ and $b$ are different letters in $G$. By Lemma \ref{lem:proper-subgroups}, these elements are not in $H$. Thus, $\cC_H$ is hyperoctahedral and simplifiable.
\end{proof}

We show now that the map $H \mapsto \cC_H$ is the inverse of $F$.

\begin{theorem}
\label{thm:F-is-1-1}
The maps $F$ and $H \mapsto \cC_H$ are inverse to each other. Hence, the map $F$ is bijective as a map from simplifiable hyperoctahedral categories of partitions onto proper $S_0$-invariant subgroups of $E$. 
\end{theorem}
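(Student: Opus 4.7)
The plan is to verify the two set-theoretic identities $F(\cC_H) = H$ for every proper $S_0$-invariant subgroup $H \leq E$, and $\cC_{F(\cC)} = \cC$ for every simplifiable hyperoctahedral category $\cC$. Once these are established, bijectivity is automatic, and since $F$ and $H \mapsto \cC_H$ both preserve inclusions, the bijection is a lattice isomorphism.

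For the identity $F(\cC_H) = H$, the inclusion $F(\cC_H) \subseteq H$ is immediate from the definition $\cC_H = w^{-1}(\pi^{-1}(H))$: if $p \in \cC_H$ and $\pi(w(p,l)) \in F(\cC_H)$ for some labelling $l$, then by the defining property of $\cC_H$ there is some labelling $l'$ with $\pi(w(p,l')) \in H$; invariance of $H$ under finite identifications of letters allows passing between labellings, so $\pi(w(p,l)) \in H$ as well. For the reverse inclusion, I would construct a concrete preimage: given $g \in H$, if $g = e$ take the pair partition with labelling $(a_1)$, which yields $\pi(w(\paarpart,(a_1))) = a_1^2 = e$; otherwise write $g = a_{i(1)} \cdots a_{i(k)}$ as a reduced word of (necessarily even) length $k$, and let $p_g$ be the partition on $k$ lower points whose block decomposition identifies positions $j, j'$ precisely when $a_{i(j)} = a_{i(j')}$, with the obvious labelling $l_g$. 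Then $\pi(w(p_g,l_g)) = g \in H$, so $p_g \in \cC_H$ and $g \in F(\cC_H)$.

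For the identity $\cC_{F(\cC)} = \cC$, the inclusion $\cC \subseteq \cC_{F(\cC)}$ is tautological, since for any $p \in \cC$ and any labelling $l$ we have $\pi(w(p,l)) \in F(\cC)$. For the reverse inclusion, suppose $p \in \cC_{F(\cC)}$, so $\pi(w(p,l)) \in F(\cC)$ for some labelling $l$. By definition of $F$, there exist $q \in \cC$ and a labelling $l'$ with $\pi(w(q,l')) = \pi(w(p,l))$. Lemma \ref{lem:characterization-equivalence}(i) then asserts that $p$ and $q$ are equivalent in the sense of Definition \ref{def:equivalence-of-partition}, and since $\cC$ is simplifiable hyperoctahedral, Lemma \ref{lem:equivalence-stays-in-category} gives $p \in \cC$.

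The only step with any real content is the first, where the combinatorial reconstruction of a partition from a reduced word in $G$ has to be set up so that the resulting $p_g$ really does land in $\cC_H$; this is straightforward given the definitions of $w$ and $\pi$. The second identity, by contrast, is essentially a restatement of Lemma \ref{lem:characterization-equivalence} together with the closure of simplifiable hyperoctahedral categories under equivalence, and thus requires no new argument.
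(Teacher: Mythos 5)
Your proof is correct and follows essentially the same route as the paper: both composites are checked directly, with $H\subseteq F(\cC_H)$ via the partition connecting equal letters of a reduced word, $F(\cC_H)\subseteq H$ via invariance of $H$ under relabelling, and $\cC_{F(\cC)}\subseteq\cC$ via Lemmas \ref{lem:characterization-equivalence} and \ref{lem:equivalence-stays-in-category}. The explicit treatment of $g=e$ is a minor addition not present in the paper but changes nothing of substance.
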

\begin{proof}
Firstly, let $H$ be a proper $S_0$-invariant subgroup of $E$, and let $x \in H$.  Denote by $p$ the partition connecting the letters of the word $x$ if and only if they coincide, and let $l$ be the labelling such that $\pi(w(p,l)) = x$.  Thus, $p \in \cC_H$ and hence $x \in F(\cC_H)$. (Recall that $x\in F(\cC)$ if and only if $x=\pi(w(p,l))$ for some $p\in \cC$ and some labelling $l$.)  Conversely, let $x = \pi(w(p,l)) \in F(\cC_H)$ where $p \in \cC_H$. By definition, there is a labelling $l'$ such that $\pi(w(p,l'))\in H$. Now, $H$ is invariant under exchange of letters, thus $x=\pi(w(p,l))\in H$. We deduce that $H = F(\cC_H)$.

Secondly, let $\cC$ be a simplifiable hyperoctahedral category of partitions, and let $p \in \cC$. Then $\pi(w(p,l)) \in F(\cC)$ for any labelling $l$, and hence $p \in \cC_{F(\cC)}$. On the other hand, for $p \in \cC_{F(\cC)}$ there is a labelling $l$ such that $\pi(w(p,l)) \in F(\cC)$. Thus, $\pi(w(p,l)) = \pi(w(q,l'))$ for some partition $q \in \cC$ and some labelling $l'$.  By Lemma \ref{lem:characterization-equivalence} and Lemma \ref{lem:equivalence-stays-in-category}, we have $p \in \cC$.  This finishes the proof of $\cC = \cC_{F(\cC)}$.
\end{proof}


\section{Classification and structural results for easy quantum groups}
\label{sec:classification-and-structure}

In this section we deduce from Theorem \ref{thm:F-is-1-1} that there are uncountably many different hyperoctahedral categories.
We end this section by giving structural results on the lattice of hyperoctahedral quantum groups.

\subsection{The classification of simplifiable hyperoctahedral categories by invariant subgroups of $\freegrp{\infty}$}
\label{sec:correspondence-finfty}

We can identify $E \leq \ZZ_2^{* \infty}$ with a free group and describe the restriction of endomorphisms from $S_0$ to $E$.  This is the content of the next lemma.

\begin{lemma}
\label{lem:E-is-free+restriction-of-S_0}
We put $x_k:=a_1a_{k+1}$ for $k=1,2,\ldots$ Then $x_1, x_2, \dotsc$ is a free basis of $E$. The restriction $\{\phi_{|E}|\phi\in S_0\subset \End(G)\}$ of endomorphisms to $E$ is the semigroup generated by
\begin{enumerate}
\item finite identifications of letters, i.e. for any $n \in \NN$ and any choice of indices $i(1), \dotsc, i(n)$ the map
  \[\begin{cases}
     x_k \mapsto x_{i(k)}       & 1 \leq k \leq n \eqcomma \\
     x_k \mapsto x_k           & k > n \eqcomma
     \end{cases}
  \]
\item for all $i \in \NN$, the map defined by $x_k \mapsto x_i^{-1} \cdot x_k$ for all $k \in \NN$,
\item for all $k \in \NN$, the map $x_k \mapsto e$ that leaves all other letters invariant,
\item the map $x_k \mapsto x_k^{-1}$ for all $k \in \NN$,
\item all inner automorphisms of $E$, 
\end{enumerate}
\end{lemma}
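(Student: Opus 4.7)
My plan is to prove the two claims of the lemma in sequence: first the freeness of $\{x_k\}_{k\geq 1}$ as a basis of $E$, then the matching of semigroups.

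For freeness, I would apply the Reidemeister--Schreier procedure to the index-two inclusion $E\leq G=\mathbb{Z}_2^{*L}$, with Schreier transversal $\{e,a_1\}$ for the two cosets (determined by parity of length). The resulting Nielsen--Schreier generators $s\cdot a_j\cdot \overline{sa_j}^{-1}$ produce, after discarding the trivial generators coming from $j=1$ and identifying inverse-duplicates, precisely the family $a_1a_{k+1}$ for $k\geq 1$, which is $\{x_k\}_{k\geq 1}$.

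For the second assertion, I would verify both inclusions between the restriction semigroup $\{\phi|_E:\phi\in S_0\}$ and the semigroup generated by the listed maps. For the inclusion realizing (1)--(5) as restrictions, I would exhibit $S_0$-preimages: (1) comes from the identification $a_{k+1}\mapsto a_{i(k)+1}$ of $G$ that fixes $a_1$; (2) at parameter $i$ comes from $a_1\mapsto a_{i+1}$, using the key identity $a_{i+1}a_{k+1}=(a_{i+1}a_1)(a_1a_{k+1})=x_i^{-1}x_k$ (with $x_i^{-1}x_i=e$ when $k=i$); (3) at parameter $k$ comes from $a_{k+1}\mapsto a_1$, which kills $x_k$; (4) comes from conjugation by $a_1$, since $a_1\cdot a_1a_{k+1}\cdot a_1=a_{k+1}a_1=x_k^{-1}$; and every element of (5) is obtained by composing conjugations of $G$ by sequences of $a$-letters whose product lies in $E$, which in turn realizes every element of $E$ and hence every inner automorphism.

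For the reverse inclusion, I would compute the restriction to $E$ of each $S_0$-generator. Conjugation by $a_1$ is directly (4). For $l\geq 1$, conjugation by $a_{l+1}$ on $w\in E$ rewrites as
\[
a_{l+1}\,w\,a_{l+1} = (a_{l+1}a_1)\,(a_1 w a_1)\,(a_1 a_{l+1}) = x_l^{-1}\cdot (a_1 w a_1)\cdot x_l,
\]
identifying the restriction as the composition of (4) with the inner automorphism from (5) given by conjugation by $x_l^{-1}$. A finite identification $\phi\colon a_j\mapsto a_{i(j)}$ restricted to $E$ sends $x_k\mapsto a_{i(1)}a_{i(k+1)}$; when $i(1)=1$ this is a finite composition of type-(1) identifications and type-(3) erasures, and when $i(1)\neq 1$ the extra factor $a_{i(1)}a_1=x_{i(1)-1}^{-1}$ appears uniformly on the left, producing an additional composition with (2) at parameter $i(1)-1$.

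The main technical obstacle is the careful ordering in this last decomposition: if $i(k+1)=1$ for some $k$, the desired image of $x_k$ is $x_{i(1)-1}^{-1}$, yet applying (2) after a type-(3) erasure would send $e\mapsto e$ and lose this factor. I would handle this by applying (2) first, so that $x_k\mapsto x_{i(1)-1}^{-1}x_k$ is in place everywhere, and only afterwards applying the appropriate identifications and erasures of the resulting letters; with this ordering the composition realizes the full restriction as a word in the generators (1)--(5), completing the equality of semigroups.
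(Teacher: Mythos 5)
Your Reidemeister--Schreier derivation of the free basis is correct (the paper argues more directly, via the normal form $a_{i_1}a_{i_2}\cdots a_{i_{2n}}=x_{i_1-1}^{-1}x_{i_2-1}\cdot x_{i_3-1}^{-1}x_{i_4-1}\cdots$ with $x_0=e$, but both routes work), your realisation of (i)--(v) as restrictions of elements of $S_0$ is the same as the paper's, and your identity $a_{l+1}wa_{l+1}=x_l^{-1}(a_1wa_1)x_l$ for the conjugations matches the paper's computation.

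The gap is in the final step, and the reordering you propose does not close it. Take the identification $\phi\in S_0$ with $n=2$, $i(1)=2$, $i(2)=1$, i.e.\ the swap $a_1\leftrightarrow a_2$. Its restriction to $E$ is $\psi(x_1)=a_2a_1=x_1^{-1}$ and $\psi(x_k)=a_2a_{k+1}=x_1^{-1}x_k$ for $k\ge 2$. Applying (ii) at parameter $1$ \emph{first}, as you suggest, sends $x_1\mapsto x_1^{-1}x_1=e$, after which no endomorphism can turn $e$ into the required image $x_1^{-1}$; applying it \emph{last} fails for exactly the reason you already noted. Worse, no reordering or other combination can succeed: $\psi$ is surjective; the only surjective maps among the generators (i)--(v) are the finite letter permutations, the map $x_k\mapsto x_k^{-1}$ and the inner automorphisms (a map of type (ii) has image whose abelianisation lies in the sum-zero hyperplane, an erasure and a non-bijective identification each miss a basis letter); hence any surjective composition of generators is a composition of only these three kinds and abelianises to $\pm$ a permutation matrix. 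But $\overline{\psi}$ sends $\bar x_k\mapsto\bar x_k-\bar x_1$ for all $k\ge 2$, so its matrix has infinitely many nonzero entries in the first row. Therefore $\psi$ does not lie in the semigroup generated by (i)--(v) at all, and the difficulty you ran into cannot be engineered away. (For what it is worth, the paper's own proof has the same blind spot: it only treats identifications that fix $a_1$, that move only $a_1$, or that collapse a single letter onto $a_1$, and then asserts that all endomorphisms from item (i) of the definition of $S_0$ have been considered. Repairing the statement requires either restricting the identifications allowed in $S_0$ or enlarging the generating set in the lemma; your argument as written cannot produce the restriction of a general identification that moves $a_1$ while also permuting other letters.)
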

\begin{proof}
  We have $x_k^{-1} = a_{k + 1}a_1$ for all $k \in \NN$.  So, every word of even length in $G$ can be written uniquely as a product of the elements $x_1, x_1^{-1}, x_2, x_2^{-1}, \dotsc$, so $x_1, x_2, \dotsc$ is a free basis for $E$.

We check that the endomorphisms (i)-(iv) are precisely the restrictions of the generators of $S_0$ as given in Definition \ref{def:endomorphisms-S_0}.  In order to obtain (i) it suffices to consider the endomorphism of $G$ defined by $a_{k + 1} \mapsto a_{i(k) + 1}$ for $1 \leq k \leq n$  and leaving all other letters invariant.  For (ii), we have to consider the endomorphism of $G$ mapping $a_1 \mapsto a_{i + 1}$ and leaving all other letters invariant.  For (iii), we have to take the map $a_{k+1} \mapsto a_1$.  We considered all possible endomorphisms from item (i) in Definition \ref{def:endomorphisms-S_0}.  The endomorphisms in (iv) are obtained by mapping $a_{k + 1} \mapsto a_1 a_{k + 1} a_1$ and for all $k \in \NN$.  The conjugation by $x_i$ is obtained by $a_k \mapsto a_{i + 1} a_k a_{i + 1}$ for all $k \in \NN$ composed with the endomorphism in (iv).  Indeed $x_k = a_1a_{k + 1}$ is mapped to $a_{i + 1} a_1 a_{k+1} a_{i + 1} = x_i^{-1} x_k^{-1} x_i$ and the endomorphism in (iv) maps this element to $x_i x_k x_i^{-1}$.  We considered all possible endomorphisms from both items (i) and (ii) in Definition \ref{def:endomorphisms-S_0}, so the restriction of $S_0$ to $E$ is equal to the semigroup described in the statement.
\end{proof}

Note that the endomorphisms defined in the previous lemma depend on the choice of the free basis $x_1,x_2, \dotsc$. Fixing this choice, we obtain an isomorphism $E \cong \freegrp{\infty}$.

\begin{definition}
\label{def:S}
  We denote by $S$ the subsemigroup of $\End(\freegrp{\infty})$ generated by the maps in Lemma \ref{lem:E-is-free+restriction-of-S_0}.
\end{definition}

\begin{lemma}
\label{lem:S-invariant-subgroups}
  A subgroup of $\freegrp{\infty}$ is $S$-invariant, if and only if
  \begin{enumerate}
  \item it is closed under identification and deletion of letters,
  \item for all $i \in \NN$ it is closed under the map $x_k \mapsto x_i^{-1} \cdot x_k$ for all $k \in \NN$,
  \item it is closed under the map $x_k \mapsto x_k^{-1}$ for all $k\in\NN$,
  \item and it is normal.
  \end{enumerate}
\end{lemma}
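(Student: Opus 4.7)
The plan is to observe that this lemma is essentially a direct translation between the generating endomorphisms of $S$ listed in Lemma \ref{lem:E-is-free+restriction-of-S_0} and the four conditions in the statement, so the proof reduces to matching each condition with the corresponding generator. Since $S$ is, by definition, the subsemigroup generated by the five types of endomorphisms (i)--(v) in Lemma \ref{lem:E-is-free+restriction-of-S_0}, a subgroup $H \leq \freegrp{\infty}$ is $S$-invariant if and only if it is invariant under each of these generators, because closure of $H$ under two endomorphisms $\phi,\psi$ implies closure under $\phi \circ \psi$ (if $\phi(H) \subseteq H$ and $\psi(H) \subseteq H$, then $(\phi \circ \psi)(H) \subseteq \phi(H) \subseteq H$).

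For the forward direction, I would assume $H$ is $S$-invariant and then read off the four conditions. Invariance under the generators of type (i) and (iii) gives exactly condition 1 (identification and deletion of letters); invariance under type (ii) is condition 2; invariance under type (iv) is condition 3; and invariance under all inner automorphisms (type (v)) is, by definition, exactly the statement that $H$ is normal, giving condition 4.

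For the backward direction, I would assume conditions 1--4 and show that $H$ is invariant under each of the five types of generators of $S$. Generators (i) and (iii) are exactly the identification and deletion maps of condition 1; generators (ii) are those of condition 2; generators (iv) are those of condition 3; and generators (v) are the inner automorphisms of $E \cong \freegrp{\infty}$, which preserve $H$ by normality (condition 4). Since each generator preserves $H$ and $S$ is the subsemigroup generated by these, every $\phi \in S$ preserves $H$, so $H$ is $S$-invariant.

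There is essentially no obstacle in this proof; the only subtle point worth mentioning explicitly is that the inner automorphisms of $\freegrp{\infty}$ appearing as generators of type (v) in Lemma \ref{lem:E-is-free+restriction-of-S_0} are exactly \emph{all} inner automorphisms, so closure under (v) is equivalent to normality in $\freegrp{\infty}$, rather than to some weaker condition. Once this observation is in place the argument is a direct unfolding of the definitions.
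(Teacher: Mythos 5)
Your proposal is correct and follows essentially the same route as the paper: the paper's proof is likewise a direct translation matching conditions (i)--(iv) of the lemma to the generators (i)--(v) of $S$ from Lemma \ref{lem:E-is-free+restriction-of-S_0}, with normality identified with invariance under all inner automorphisms. Your explicit remark that invariance under the generators suffices for invariance under the whole semigroup is a small but welcome addition that the paper leaves implicit.
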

\begin{proof}
  This is a translation of Lemma \ref{lem:E-is-free+restriction-of-S_0}.  Item (i) and (iii) of Lemma \ref{lem:E-is-free+restriction-of-S_0} correspond to item (i) here. The items (ii) correspond to each other, item (iv) of Lemma \ref{lem:E-is-free+restriction-of-S_0} corresponds to item (iii) here and normality of an $S$-invariant subgroup is the same as invariance under inner automorphism (item (v) of Lemma \ref{lem:E-is-free+restriction-of-S_0}). 
\end{proof}

Theorem \ref{thm:F-is-1-1} can be now translated into the more convenient setting of subgroups of $\freegrp{\infty}$.

\begin{theorem}
\label{thm:F-is-isomorphism}
  The map $F$ of Definition \ref{def:F} induces a lattice isomorphism between simplifiable hyperoctahedral categories of partitions and proper $S$-invariant subgroups of $\freegrp{\infty}$.
\end{theorem}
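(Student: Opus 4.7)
The plan is to derive this theorem as an essentially formal consequence of Theorem \ref{thm:F-is-1-1} together with Lemma \ref{lem:E-is-free+restriction-of-S_0}. Since Theorem \ref{thm:F-is-1-1} already gives that $F$ is a bijection between simplifiable hyperoctahedral categories of partitions and proper $S_0$-invariant subgroups of $E$, what remains is to (a) re-express $S_0$-invariance of subgroups of $E$ as $S$-invariance of subgroups of $\freegrp{\infty}$, and (b) verify that the bijection is a lattice isomorphism.

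For (a), first I would observe that both generating types of endomorphisms in Definition \ref{def:endomorphisms-S_0}, namely identifications of letters and conjugation by a single letter, preserve the parity of word length in $G = \ZZ_2^{*\infty}$. Hence every $\phi \in S_0$ satisfies $\phi(E) \subset E$, and it makes sense to restrict $S_0$ to $E$. Then by Lemma \ref{lem:E-is-free+restriction-of-S_0} the set $\{\phi_{|E}\mid \phi\in S_0\}$ is precisely the subsemigroup of $\End(E)$ that, under the free-basis identification $E \cong \freegrp{\infty}$ with $x_k = a_1 a_{k+1}$, coincides with $S$ of Definition \ref{def:S}. Consequently, for a subgroup $H \leq E$, invariance under $S_0$ (i.e.\ $\phi(H)\subset H$ for all $\phi \in S_0$) is the same condition as invariance under $S$ (i.e.\ $\psi(H) \subset H$ for all $\psi \in S$), because the latter semigroup is generated by the restrictions of the former. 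This matches up $S_0$-invariant subgroups of $E$ with $S$-invariant subgroups of $\freegrp{\infty}$ in an inclusion-preserving, bijective manner. The word \emph{proper} transports without change since $E$ itself corresponds to $\freegrp{\infty}$ under the identification.

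For (b), I would appeal to the lattice homomorphism property of $F$ already recorded in Lemma \ref{lem:F-defines-invariant-group}: $F$ preserves intersections and unions of categories (the latter in the sense of the category generated by the union), because $\tilde\pi \circ w$ manifestly preserves inclusions and joins, and so does its inverse $H \mapsto \cC_H$ by construction. Being a bijective lattice homomorphism between lattices, $F$ is automatically a lattice isomorphism.

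Putting the pieces together, Theorem \ref{thm:F-is-1-1} provides the bijection, step (a) reformulates the target side as proper $S$-invariant subgroups of $\freegrp{\infty}$, and step (b) upgrades the bijection to a lattice isomorphism. I do not anticipate any serious obstacle: the only point that requires actual verification is that the restriction map $\phi \mapsto \phi_{|E}$ from $S_0$ onto its image generates the same semigroup as $S$, but this is exactly the content of Lemma \ref{lem:E-is-free+restriction-of-S_0}, so the argument is essentially a matter of assembling previously proved statements.
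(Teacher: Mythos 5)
Your proposal is correct and follows the same route as the paper, which proves this theorem simply by combining Theorem \ref{thm:F-is-1-1} with Lemma \ref{lem:E-is-free+restriction-of-S_0}; your elaboration of why $S_0$-invariance of subgroups of $E$ translates into $S$-invariance under the identification $E\cong\freegrp{\infty}$, and why the bijection is automatically a lattice isomorphism, fills in exactly the details the paper leaves implicit.
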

\begin{proof}
  This follows from Theorem \ref{thm:F-is-1-1} and Lemma \ref{lem:E-is-free+restriction-of-S_0}
\end{proof}

The formulation of the previous theorem allows us to employ a well-known subset of the $S$-invariant subgroups of $\freegrp{\infty}$.  The next observation is essential for the rest of this section.

\begin{remark}
\label{rem:fully-characteristic-is-S-invariant}
Every fully characteristic subgroup (see Section \ref{sec:introduction-to-varieties} for a definition) of $\freegrp{\infty}$ is $S$-invariant.  Hence $F$ induces a lattice embedding of proper fully characteristic subgroups of $\freegrp{\infty}$ into  simplifiable hyperoctahedral categories of partitions.  
\end{remark}

To close this section, let us ask whether or not also the other implication holds:  
Is every $S$-invariant subgroup of $\freegrp{\infty}$ fully characteristic?
We only have a partial answer to this question.

\begin{proposition}
\label{prop:conjecture-for-abelian-varieties}
  Every $S$-invariant subgroup of $\freegrp{\infty}$ that contains the commutator $x_1x_2x_1^{-1}x_2^{-1}$ is fully characteristic.
\end{proposition}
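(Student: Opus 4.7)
The plan is to reduce the statement to the abelianization. The main step is to establish that $H$ contains the commutator subgroup $[\freegrp{\infty}, \freegrp{\infty}]$, after which the problem reduces to classifying certain subgroups of the free abelian group $\ZZ^{(\infty)} \cong \freegrp{\infty}/[\freegrp{\infty}, \freegrp{\infty}]$.

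First, I would show $[\freegrp{\infty}, \freegrp{\infty}] \subseteq H$. Applying an identification endomorphism (Lemma \ref{lem:E-is-free+restriction-of-S_0}(i)) to the assumed element $[x_1, x_2] \in H$ yields $[x_i, x_j] \in H$ for all indices $i \neq j$. Since $S$ contains all inner automorphisms (item (v) of Lemma \ref{lem:E-is-free+restriction-of-S_0}), the subgroup $H$ is normal in $\freegrp{\infty}$. The normal closure of $\{[x_i, x_j] : i \neq j\}$ coincides with $[\freegrp{\infty}, \freegrp{\infty}]$, since the quotient of $\freegrp{\infty}$ by this normal closure is abelian. Hence $[\freegrp{\infty}, \freegrp{\infty}] \subseteq H$.

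Next, I would pass to the quotient. Let $\pi : \freegrp{\infty} \to \ZZ^{(\infty)}$ be the abelianization and set $e_k := \pi(x_k)$. Because every endomorphism of $\freegrp{\infty}$ preserves the commutator subgroup, each generator of $S$ descends to an endomorphism of $\ZZ^{(\infty)}$; concretely, items (i) and (iii) of Lemma \ref{lem:E-is-free+restriction-of-S_0} descend to identifications and deletions of basis elements, item (ii) becomes $e_k \mapsto e_k - e_i$, item (iv) becomes $e_k \mapsto -e_k$, and the inner automorphisms act trivially. In particular, $\bar H := H/[\freegrp{\infty}, \freegrp{\infty}]$ is an $S$-invariant subgroup of $\ZZ^{(\infty)}$.

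I would then classify the $S$-invariant subgroups of $\ZZ^{(\infty)}$. Put $D := \{m \in \ZZ : m e_1 \in \bar H\}$, a subgroup of $\ZZ$, and write $D = d \ZZ$. By identification, $D e_k \subseteq \bar H$ for every $k$, so $\bigoplus_k D e_k \subseteq \bar H$. Conversely, for any $h = \sum_{k=1}^n m_k e_k \in \bar H$, iterated single-letter deletions give $m_k e_k \in \bar H$ for each $k$, and identification then yields $m_k \in D$. Therefore $\bar H = d \cdot \ZZ^{(\infty)}$, and this subgroup is evidently fully characteristic in $\ZZ^{(\infty)}$: for any endomorphism $\psi$ and any $h \in \ZZ^{(\infty)}$ one has $\psi(dh) = d\psi(h) \in d \cdot \ZZ^{(\infty)}$. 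Lifting back, given any endomorphism $\phi$ of $\freegrp{\infty}$ with induced map $\bar\phi$ on $\ZZ^{(\infty)}$, for $h \in H$ we obtain $\pi(\phi(h)) = \bar\phi(\pi(h)) \in \bar H$, so $\phi(h) \in \pi^{-1}(\bar H) = H$, proving that $H$ is fully characteristic. The main obstacle I expect is the reduction in the second paragraph — producing $[\freegrp{\infty}, \freegrp{\infty}] \subseteq H$ from a single commutator — but normality combined with identifications handles it cleanly.
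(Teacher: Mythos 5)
Your proof is correct, but it takes a genuinely different route from the paper. The paper reduces the statement to an injectivity claim for the map sending an $S$-invariant subgroup to its fully characteristic closure, and verifies that claim by importing the classification of half-liberated categories from \cite[Theorem 4.13]{weber12}: the $S$-invariant subgroups containing the commutator correspond to the categories $\langle \halflibpart, \vierpart\rangle$ and $\langle \halflibpart, \vierpart, h_s\rangle$, and their fully characteristic closures are distinguished by the varieties of abelian groups of the various exponents. You instead argue directly and without any partition-theoretic input: normality of $H$ (invariance under inner automorphisms) together with identification of letters upgrades the single commutator $[x_1,x_2]$ to all of $[\freegrp{\infty},\freegrp{\infty}]$; the generators of $S$ descend to the abelianization, where the deletion and identification maps force $\bar H = d\,\ZZ^{(\infty)}$; and such subgroups are manifestly fully characteristic, which lifts back because $\pi^{-1}(\bar H)=H$. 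Your argument is more elementary and self-contained, and it yields slightly more: an explicit description of every $S$-invariant subgroup containing the commutator as $\pi^{-1}(d\,\ZZ^{(\infty)})$, which in effect reproves the relevant part of the classification the paper cites. The paper's route is shorter given the machinery already in place and stays consistent with its overall dictionary between partitions, subgroups and varieties. All the individual steps in your argument check out, including the only delicate point (that the normal closure of the $[x_i,x_j]$ is the full commutator subgroup, and that invariance of $d\,\ZZ^{(\infty)}$ under \emph{all} endomorphisms of $\ZZ^{(\infty)}$, not just the induced ones, is what makes the lift work).
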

\begin{proof}
It suffices to show that the map sending an $S$-invariant subgroup of $\freegrp{\infty}$ to its fully characteristic closure is injective on subgroups containing $x_1x_2x_1^{-1}x_2^{-1}$.  A proper $S$-invariant subgroup $H \leq \freegrp{\infty}$ contains the commutator $x_1x_2^{-1}x_1^{-1}x_2 = a_1a_2a_3a_1a_2a_3$ if and only if the associated category of partitions contains the half-liberating partition.  Similarly, the $s$-mixing partition $h_s$ corresponds to the element $a_1a_2a_1a_2 \dotsc a_1a_2$ ($s$ repetitions), which is equal to $x_1^s$.  So by \cite[Theorem 4.13]{weber12}, it suffices to prove that the fully characteristic subgroups generated by $x_1x_2x_1^{-1}x_2^{-1}$ and $x_1^s$ are pairwise different for different $s \in \NN \setminus \{1\}$.  By the fact that the group $\ZZ/s\ZZ$ is abelian and has exponent $s$ but not exponent $s'$ for $s' < s$, invoking the correspondence between fully characteristic subgroups of $\freegrp{\infty}$ and varieties of groups from Theorem \ref{thm:varieties-and-fully-characteristic-subgroups}, we finish the proof.
\end{proof}

\subsection{Classification results for easy quantum groups}
\label{sec:classification-results}

The link between the theory of varieties of groups and easy quantum groups is given by Theorem \ref{thm:F-is-isomorphism} and Remark \ref{rem:fully-characteristic-is-S-invariant}.  Let us state this more precisely. A hyperoctahedral easy quantum group is called \emph{simplifiable}, if its corresponding hyperoctahedral category of partitions is simplifiable.

\begin{theorem}
\label{thm:easy-quantum-groups-and-varieties}
  There is a lattice injection from the lattice of nonempty varieties of groups into the lattice of simplifiable hyperoctahedral easy quantum groups.
\end{theorem}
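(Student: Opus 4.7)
The plan is to compose four correspondences already established in the paper and verify that each is compatible with the lattice structure so that the final composition is an order-preserving injection of lattices.

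First, I would invoke Theorem~\ref{thm:varieties-and-fully-characteristic-subgroups}, which gives a lattice anti-isomorphism between varieties of groups and fully characteristic subgroups of $\freegrp{\infty}$. Under this anti-isomorphism, the trivial variety (consisting only of the trivial group) corresponds to $\freegrp{\infty}$ itself, while every other (``nonempty'') variety corresponds to a proper fully characteristic subgroup. Next, I would use Remark~\ref{rem:fully-characteristic-is-S-invariant}: every fully characteristic subgroup of $\freegrp{\infty}$ is automatically $S$-invariant (as $S$ is a subsemigroup of $\End(\freegrp{\infty})$), so the inclusion realises an order-preserving embedding of proper fully characteristic subgroups into proper $S$-invariant subgroups of $\freegrp{\infty}$.

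Then I would apply Theorem~\ref{thm:F-is-isomorphism}, which provides a lattice isomorphism between proper $S$-invariant subgroups of $\freegrp{\infty}$ and simplifiable hyperoctahedral categories of partitions via the inverse of $F$. Finally, Theorem~\ref{thm:classification-easy-quantum-groups-by-partitions} translates simplifiable hyperoctahedral categories into simplifiable hyperoctahedral easy quantum groups (up to similarity). The overall composition
\[
\{\text{nonempty varieties of groups}\} \hookrightarrow \{\text{proper fully char.\ subgroups of } \freegrp{\infty}\} \hookrightarrow \{\text{proper $S$-invariant subgroups of } \freegrp{\infty}\}
\]
\[
\stackrel{F^{-1}}{\longrightarrow} \{\text{simplifiable hyperoct.\ categories}\} \longrightarrow \{\text{simplifiable hyperoct.\ easy quantum groups}\}
\]
is then the desired injection. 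Injectivity is automatic from injectivity of each factor; the key point is that the composition is a lattice morphism in the intended (order-preserving) sense.

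The main subtle point is keeping track of how order behaves along this chain. Theorem~\ref{thm:varieties-and-fully-characteristic-subgroups} reverses order, Remark~\ref{rem:fully-characteristic-is-S-invariant} and Theorem~\ref{thm:F-is-isomorphism} preserve order, and the partition/quantum-group correspondence reverses order (a larger category of partitions corresponds to more intertwiner relations, hence a smaller quantum group). Thus two order-reversals cancel, and the composition is genuinely an order-preserving injection of lattices. This is the only real bookkeeping obstacle; once the order conventions are in place, the theorem is immediate by concatenation of the cited results.
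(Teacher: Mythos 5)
Your proposal is correct and follows essentially the same route as the paper: composing the anti-isomorphism of Theorem~\ref{thm:varieties-and-fully-characteristic-subgroups}, the inclusion of Remark~\ref{rem:fully-characteristic-is-S-invariant}, the isomorphism of Theorem~\ref{thm:F-is-isomorphism}, and the (order-reversing) partition/quantum-group correspondence. Your explicit bookkeeping of the two cancelling order reversals is a nice touch that the paper leaves implicit.
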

\begin{proof}
  The lattice of simplifiable hyperoctahedral quantum groups is anti-isomorphic to the lattice of simplifiable hyperoctahedral categories of partitions. Theorem \ref{thm:F-is-isomorphism} shows that the latter lattice is isomorphic to the lattice of proper $S$-invariant subgroups of $\freegrp{\infty}$.  By Remark \ref{rem:fully-characteristic-is-S-invariant}, there is an injection of lattices of proper fully characteristic subgroups of $\freegrp{\infty}$ into the lattice of proper $S$-invariant subgroups of $\freegrp{\infty}$.  The former is anti-isomorphic with the lattice of nonempty varieties of groups by Theorem \ref{thm:varieties-and-fully-characteristic-subgroups}.  Composing these isomorphisms, injections and anti-isomorphisms, we obtain an injection of lattices as in the statement of the theorem.  
\end{proof}

\begin{remark}
\label{rem:varieties-combinatorics}
  The proof of the previous theorem also shows that there is a one-to-one correspondence between varieties of groups and certain simplifiable hyperoctahedral categories of partitions.  We hence obtain a combinatorial and a quantum group perspective on varieties of groups.
\end{remark}

The correspondence from the last theorem allows us to translate known results about varieties of groups into statements about easy quantum groups. Let us start with some results about the classification of easy quantum groups.

In \cite{banicacurranspeicher09_2}, the question was raised whether or not all easy quantum groups are either classical, free, half-liberated or form part of a multi-parameter family unifying the series of quantum groups $H_n^{(s)}$ and $H_n^{[s]}$.  We can answer this question in the negative.

\begin{theorem}
\label{thm:uncountably-many-easy-quantum-groups}
  There are uncountably many pairwise non-isomorphic easy quantum groups.
\end{theorem}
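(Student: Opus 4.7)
The plan is to derive this cardinality statement directly from Theorem \ref{thm:easy-quantum-groups-and-varieties} combined with Olshanskii's classical result. More precisely, Theorem \ref{thm:easy-quantum-groups-and-varieties} provides an injective lattice map from nonempty varieties of groups into simplifiable hyperoctahedral easy quantum groups. Since distinct varieties yield distinct quantum groups under this map, and since the lattice of easy quantum groups is (by passing through categories of partitions and Theorem \ref{thm:classification-easy-quantum-groups-by-partitions}) a lattice of isomorphism classes, any uncountable family of varieties will automatically produce an uncountable family of pairwise non-isomorphic easy quantum groups.

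So the only thing to invoke is the existence of uncountably many pairwise different varieties of groups. This is precisely the theorem of Olshanskii \cite{olshanskii70}, which the paper has already cited. Applying this result and pushing the uncountable family through the injection from Theorem \ref{thm:easy-quantum-groups-and-varieties} yields the conclusion.

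The step that should be made explicit in the write-up is the compatibility of the injection with the notion of isomorphism on the quantum group side: since the injection factors through categories of partitions via the map $F$ and the bijective correspondence of Theorem \ref{thm:classification-easy-quantum-groups-by-partitions} (which identifies easy quantum groups up to similarity with their categories), different varieties give rise to different categories, hence to pairwise non-similar (and in particular pairwise non-isomorphic) easy quantum groups. Provided one has set up these identifications carefully in the preceding sections, no additional obstacle appears; the proof of the present theorem is essentially a one-line corollary of Theorem \ref{thm:easy-quantum-groups-and-varieties} together with \cite{olshanskii70}.
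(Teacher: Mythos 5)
Your proof is exactly the paper's: the theorem is stated there as an immediate consequence of Theorem \ref{thm:easy-quantum-groups-and-varieties} together with Olshanskii's result that the class of varieties of groups has the cardinality of the continuum. One caution about your final parenthetical: in the paper's conventions similarity of CMQGs is \emph{finer} than isomorphism of the underlying CQGs, so ``pairwise non-similar, in particular pairwise non-isomorphic'' reverses the implication---distinct categories of partitions directly give non-similar quantum groups, and passing from that to non-isomorphic requires an extra (here silent, also in the paper) identification.
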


This follows directly from Theorem \ref{thm:easy-quantum-groups-and-varieties} and the following result by Olshanskii.

\begin{theorem}[See \cite{olshanskii70}]
\label{thm:uncountably-many-varieties}
  The class of varieties of groups has cardinality equal to the continuum.
\end{theorem}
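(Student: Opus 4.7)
The plan is to establish the equality of cardinalities by proving both inequalities. The upper bound is elementary: by Theorem \ref{thm:varieties-and-fully-characteristic-subgroups}, varieties of groups correspond bijectively to fully characteristic subgroups of $\freegrp{\infty}$, and any such subgroup is determined by a subset of $\freegrp{\infty}$. Since $\freegrp{\infty}$ is countable, there are at most $2^{\aleph_0}$ subsets, hence at most continuum many varieties. This direction I would dispatch in a sentence.

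The substantive content is the lower bound: exhibiting $2^{\aleph_0}$ pairwise distinct varieties. My strategy is to produce an \emph{independent family} of laws, that is, a sequence of words $w_1, w_2, \dotsc \in \freegrp{\infty}$ together with a sequence of groups $G_1, G_2, \dotsc$ such that for every $n \in \NN$ the group $G_n$ satisfies the identical relation $w_m = 1$ for every $m \neq n$, but does \emph{not} satisfy $w_n = 1$. Given such a family, for every subset $S \subseteq \NN$ I define the variety $\cV_S$ by the set of laws $\{w_n \mid n \notin S\}$. If $S \neq T$ and $n \in S \setminus T$, then $G_n$ belongs to $\cV_T$ (since it satisfies every $w_m$ with $m \neq n$, and in particular every $w_m$ with $m \notin T$) but fails to belong to $\cV_S$ (since $n \notin S$ would force $w_n$ to hold in $G_n$, contradicting independence). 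Hence $S \mapsto \cV_S$ is an injection from $2^{\NN}$ into the lattice of varieties, which yields the required continuum lower bound.

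The main obstacle is constructing the independent family itself — this is precisely the deep content of Olshanskii's paper \cite{olshanskii70}. One cannot use naive candidates like $w_n = x^n$, since the laws $\{x^n \mid n \in S\}$ and $\{x^n \mid n \in T\}$ define the same variety once the two sets generate the same subgroup of $(\ZZ, +)$ via the exponent lattice, collapsing the continuum to a countable list. Olshanskii overcomes this by building groups whose laws are controlled in a far more rigid way, using word-labelled relators that are \emph{aperiodic} in a combinatorial sense and amplified by iterated small-cancellation or HNN-type constructions; the net effect is that collapsing any single $w_n$ has a detectable, localised effect that is invisible to all other $w_m$. I would simply cite \cite{olshanskii70} for this core construction, and limit my own write-up to the cardinality bookkeeping and the verification that the diagonal-style injection $S \mapsto \cV_S$ has the claimed separating property.

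Finally, combining the two bounds gives the equality $|\{\text{varieties of groups}\}| = 2^{\aleph_0}$, which completes the proof.
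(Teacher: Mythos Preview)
The paper does not actually prove this theorem: it is stated purely as a citation of \cite{olshanskii70} and used as a black box to deduce Theorem~\ref{thm:uncountably-many-easy-quantum-groups}. Your proposal is therefore aligned with the paper's treatment, since you too defer the hard construction of an independent family of laws to Olshanskii; the extra material you supply (the trivial upper bound via Theorem~\ref{thm:varieties-and-fully-characteristic-subgroups} and the bookkeeping for $S \mapsto \cV_S$) is expository context rather than a different argument.

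One correction is needed in your separation argument: you have the roles of $S$ and $T$ reversed. With $\cV_S$ defined by the laws $\{w_m \mid m \notin S\}$ and $G_n$ satisfying $w_m$ for all $m \neq n$ but not $w_n$, one has $G_n \in \cV_S$ if and only if $n \in S$. Hence for $n \in S \setminus T$ it is $G_n \in \cV_S$ and $G_n \notin \cV_T$, the opposite of what you wrote; your parenthetical ``since $n \notin S$'' contradicts the standing assumption $n \in S \setminus T$. This is a bookkeeping slip, not a structural gap, and the injection $S \mapsto \cV_S$ works once the labels are straightened out.
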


Easy quantum groups offer a class of examples, which is concretely accessible by means of combinatorics.  Therefore, it would be good to amend Theorem \ref{thm:uncountably-many-easy-quantum-groups} with concrete examples.  Unfortunately, partitions are not well-suited to represent higher commutators in $E$.  We therefore omit a concrete translation of the following result of Vaughan-Lee.  The notation $[x_1, x_2, x_3, \dotsc, x_n]$ denotes the higher commutator $[[[ \dotsm [[x_1,x_2], x_3],x_4], \dotsc ],x_n]$.

\begin{theorem}[See \cite{vaughan-lee70}]
 Let $x,y,z, x_1,x_2, \dotsc$ be a free basis of $\freegrp{\infty}$. Denote $w_k = [[x,y,z],[x_1,x_2],[x_3,x_4], \dotsc , [x_{2k-1},x_{2k}],[x,y,z]]$. Then the fully characteristic subgroups of $\freegrp{\infty}$ generated by
\[\{w_k \amid k \in I\} \cup \{x^{16}, [[x_1,x_2,x_3],[x_4,x_5,x_6],[x_7,x_8]]\} \eqcomma \quad I \subset \NN\]
are pairwise different.
\end{theorem}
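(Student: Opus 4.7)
The plan is to follow Vaughan-Lee's construction in \cite{vaughan-lee70}. Write $c := [[x_1,x_2,x_3],[x_4,x_5,x_6],[x_7,x_8]]$ and, for each $I \subset \NN$, let $N_I$ denote the fully characteristic subgroup of $\freegrp{\infty}$ generated by $\{w_k \amid k \in I\} \cup \{x^{16}, c\}$. To prove that $I \mapsto N_I$ is injective it suffices, by a routine lattice argument on intersections, to establish the single relative statement: for every fixed $k_0 \in \NN$,
\[
  w_{k_0} \notin N_{\NN \setminus \{k_0\}} \eqstop
\]
Via Theorem \ref{thm:varieties-and-fully-characteristic-subgroups} this is equivalent to saying that the variety $\cV_{k_0}$ determined by the identities $\{w_k \amid k \neq k_0\} \cup \{x^{16}, c\}$ does not satisfy the identity $w_{k_0}$. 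So the goal is to exhibit, for each $k_0$, a group $G_{k_0} \in \cV_{k_0}$ in which some evaluation of $w_{k_0}$ is nontrivial.

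The next step is to construct $G_{k_0}$ as a finite $2$-group of exponent dividing $16$ whose derived length is bounded by the law $c$ (the identity $c$ forces a specific ternary nested commutator of the third derived series to vanish). The main technical tool will be the Lazard/Magnus correspondence, which translates such a group into a Lie ring over $\ZZ/16\ZZ$. Under this translation each $w_k$ becomes a multilinear Lie polynomial that is \emph{homogeneous} in a multidegree depending on $k$: the outer factors $[x,y,z]$ contribute a common degree, while the interior factors $[x_{2i-1}, x_{2i}]$, of which there are exactly $k$, contribute an additional degree proportional to $k$. Hence the linearisations of distinct $w_k$'s occupy pairwise distinct homogeneous components of the relatively free Lie ring associated with $\cV_{k_0}$, and in particular the relations $w_k = 1$ for $k \neq k_0$ can be imposed without forcing the $w_{k_0}$-component to vanish. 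Taking $G_{k_0}$ to be the free group of $\cV_{k_0}$ of sufficiently large finite rank, truncated at a nilpotency class exceeding the weight of $w_{k_0}$, will then yield the required witness.

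The main obstacle is the multilinear independence step: one must verify that, once the identities $w_k = 1$ ($k \neq k_0$), $x^{16} = 1$ and $c = 1$ are imposed, the image of $w_{k_0}$ in the associated graded Lie ring is genuinely nonzero. This rests on an explicit Hall-basic-commutator calculation, using that $c$ only kills components of a very specific shape (ternary nested brackets of commutators of weights $3$, $3$, $2$) and that the exponent $16 = 2^4$ is chosen precisely so that $2$-torsion does not collapse the relevant component. Granted this nonvanishing, Lazard's correspondence transfers the witness back to $G_{k_0}$, which establishes $w_{k_0} \notin N_{\NN \setminus \{k_0\}}$. Since $|\cP(\NN)| = 2^{\aleph_0}$, the injection $I \mapsto N_I$ then produces continuum many pairwise distinct fully characteristic subgroups, in agreement with Theorem \ref{thm:uncountably-many-varieties}.
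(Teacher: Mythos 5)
First, a point of comparison: the paper does not prove this statement at all --- it is quoted from Vaughan-Lee's article \cite{vaughan-lee70} and used as a black box, so there is no internal proof to measure your argument against. Your opening reduction is fine: injectivity of $I \mapsto N_I$ does follow from the single relative statement $w_{k_0} \notin N_{\NN \setminus \{k_0\}}$ for every $k_0$, and recasting this via Theorem \ref{thm:varieties-and-fully-characteristic-subgroups} as the existence of a group satisfying all the other laws but not $w_{k_0}$ is the standard formulation of independence of a set of laws. This is indeed the shape of Vaughan-Lee's argument.

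The genuine gap is in the step you call the ``multilinear independence step,'' and it is not a technicality that can be granted. The observation that distinct $w_k$ have distinct total weights $2k+6$, and hence lie in distinct homogeneous components of a graded Lie ring, does not imply that $w_{k_0}$ avoids the fully characteristic subgroup generated by the other words: that subgroup is closed under \emph{all} endomorphisms of $\freegrp{\infty}$, so substituting longer words into the $w_k$ with $k \neq k_0$, into $x^{16}$ and into $c$ produces many consequences lying in the weight-$(2k_0+6)$ component. Showing that none of these consequences, combined with normal closure, yields $w_{k_0}$ is precisely the content of the theorem and is not a formal consequence of the grading. (A smaller inaccuracy: $w_k$ is not multilinear, since $x$, $y$ and $z$ each occur twice.) Vaughan-Lee's actual proof constructs, for each $k_0$, an explicit finite $2$-group --- built from a nilpotent algebra over the field with two elements --- in which the laws $w_k$ ($k \neq k_0$), $x^{16}$ and $c$ hold while some evaluation of $w_{k_0}$ is nontrivial, and verifying that nontriviality is the concrete computation your proposal explicitly defers (``Granted this nonvanishing\dots''). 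As written, the proposal is a strategy outline whose postponed step is the theorem itself.
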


It would be interesting to find an uncountable family of categories of partitions, which is more natural from the point of view of combinatorics.

\begin{remark}
\label{rem:complexity-of-easy-quantum-groups}
Given a certain class of objects that we want to classify up to a certain equivalence relation $\simeq$, it is generally known that the cardinality of the quotient set does not say a lot about the difficulty of the classification. The difference between the classification of torsion free abelian groups of rank $1$ \cite{bear37} and of rank $\geq 2$ \cite{kurosch37, malcev38} is a classical instance of this fact.  The theory of \emph{Borel reducibility} offers a better point of view on classification problems of this kind.  See \cite{kechris99-new-directions} for an exposition.  If $\cR, \cS$ are equivalence relations on  Polish spaces $X$ and $Y$, respectively, then $\cR$ is called Borel reducible to $\cS$, if there is a Borel map $f: X \ra Y$ such that $x_1 \simeq_\cR x_2 \Leftrightarrow f(x_1) \simeq_\cS f(x_2)$.  In common terms, $\cR$ is ``easier'' then $\cS$.  We call $\cR$ \emph{smooth}, if it is reducible to the equivalence relation of equality of points on some Polish space $Y$.

Denoting by $P$ the set of all partitions, the space $X = 2^P$ of subsets of $P$ is a Polish space.  Denote by $\cR_{QG}$ the equivalence relation on $X$ making $x_1, x_2 \in X$ equivalent, if and only if they generate the same category of partitions, i.e. $\langle x_1 \rangle = \langle x_2 \rangle$.  We want to pose the question, whether the equivalence relation $\cR_{QG}$ is smooth.

Note that by \cite[Theorem 1.1]{harringtonkechrislouveau90}, in order to show that $\cR_{QG}$ is not smooth, it suffices to find a family $(p_n)_{n \in \NN}$ of partitions, such that two subsets $\{ p_n \amid n \in A\}$ and $\{ p_n \amid n \in B\}$ generate the same category of partitions if and only if $A \Delta B$ is finite. 
\end{remark}

\begin{remark}
\label{rem:tensor-categories-of-easy-quantum-groups}
If $(A, u)$ is a hyperoctahedral quantum group with associated category of partitions $\cC$, then $\singleton \ot \singleton \notin \cC$.  As a consequence, $\Hom(u,u)$ is one dimensional and hence $u$ is an irreducible corepresentation of $A$.  It follows that the tensor \Cstar-category of unitary finite dimensional corepresentations of $(A, u)$ is generated by a single irreducible element.  So Theorem \ref{thm:uncountably-many-easy-quantum-groups} gives rise to many new tensor \Cstar-categories, whose fusion rules are described by the combinatorics of categories of partitions.  It remains an interesting question to determine these fusion rules.
\end{remark}

\subsection{Structural results for easy quantum groups}
\label{sec:structural-results}

We position known hyperoctahedral quantum groups in the context of varieties of groups.  See also Example \ref{ex:varieties}.

\begin{example}
\label{ex: varieties-and-easy-quantum groups}
  \begin{enumerate}
  \item The variety of all groups corresponds to the trivial subgroup of $\freegrp{\infty}$, which in turn corresponds to the maximal simplifiable quantum group $H_n^{[\infty]}$ (resp. to the category $\langle\primarypart\rangle$).
  \item By Proposition \ref{prop:conjecture-for-abelian-varieties}, the category $\langle \halflibpart,\vierpart \rangle$ corresponds to the commutator subgroup of $\freegrp{\infty}$.  So Example \ref{ex:varieties}(ii) shows that the quantum group $H_n^*$ corresponds to the variety of all abelian groups.
  \item By the same Proposition and Examples \ref{ex:varieties} (ii) and (iii), the categories $\langle \halflibpart, \vierpart, h_s \rangle$ correspond to the fully characteristic subgroup of $\freegrp{\infty}$ generated by the commutator subgroup and $x_1^s$.  It follows that the easy quantum group $H_n^{(s)}$, $s \geq 2$ corresponds to the variety of abelian groups of exponent $s$.  Note that $H_n^{(2)} = H_n$ is a group.
  \end{enumerate}
\end{example}

We end this section by giving two structural results regarding the classification of hyperoctahedral quantum groups.

For a simplifiable hyperoctahedral category $\cC$ denote by $\cC^0$ its intersection with $\langle \halflibpart, \vierpart \rangle$.  The following theorem generalises Theorem 4.13 of \cite{weber12}.

\begin{proposition}
\label{prop:reduction-of-classification}
  Every simplifiable hyperoctahedral easy quantum group $G$ is either an intermediate quantum subgroup $H_n^+ \supset G \supset H_n^*$ or it is the intersection of some $H_n^+ \supset G_0 \supset H_n^*$ and  $H_n^{[s]}$ for some $s \geq 2$.  Equivalently, for every simplifiable hyperoctahedral category $\cC$ that is not contained in $\langle \halflibpart, \vierpart \rangle$, there is $s \geq 2$ such that $\cC = \langle \cC^0, h_s \rangle$.
\end{proposition}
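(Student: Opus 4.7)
Via the lattice isomorphism $F$ of Theorem~\ref{thm:F-is-isomorphism}, write $H := F(\cC)$, $K := F(\langle \halflibpart, \vierpart \rangle) = [\freegrp{\infty}, \freegrp{\infty}]$ (Example~\ref{ex: varieties-and-easy-quantum groups}(ii)), and $L_s := F(\langle h_s \rangle) = \langle x_1^s \rangle_{S\text{-inv}}$, since $h_s$ corresponds to the word $x_1^s$ (cf.\ the proof of Proposition~\ref{prop:conjecture-for-abelian-varieties}). Because $F$ preserves meets and joins, the identity $\cC = \langle \cC^0, h_s \rangle$ becomes the group-theoretic statement $H = (H \cap K) \vee L_s$, where the join is taken in the lattice of $S$-invariant subgroups. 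Thus it suffices to show that every proper $S$-invariant subgroup $H \leq \freegrp{\infty}$ not contained in $K$ admits such a decomposition for some $s \geq 2$.

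The key structural fact I would establish is that the image $q(H)$ of $H$ under the abelianisation map $q : \freegrp{\infty} \twoheadrightarrow \freegrp{\infty}^{\mathrm{ab}}$ equals $d \cdot \freegrp{\infty}^{\mathrm{ab}}$ for a unique $d \geq 0$. For any $h \in H$ with exponent sum $m_i$ in the letter $x_i$, applying the deletion endomorphism $x_k \mapsto e$ for $k \neq i$ (which belongs to $S$ by Lemma~\ref{lem:E-is-free+restriction-of-S_0}(iii)) yields $x_i^{m_i} \in H$. Hence each $D_i := \{m \in \ZZ : x_i^m \in H\}$ is a subgroup of $\ZZ$, and $S$-invariance under identification of letters forces $D_i = D_1 =: d\ZZ$ to be independent of $i$. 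This in turn forces every element of $q(H)$ to lie in $d \cdot \freegrp{\infty}^{\mathrm{ab}}$, and the reverse inclusion is immediate from $d x_i \in q(H)$ for all $i$.

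Since $H \not\subset K$, we have $d \geq 1$; and $d = 1$ would give $x_i \in H$ for all $i$, hence $H = \freegrp{\infty}$, contradicting properness. So $d \geq 2$, and we set $s := d$. The inclusion $(H \cap K) \vee L_s \subset H$ is immediate from $x_1^s \in H$. For the converse, given $h \in H$ with $q(h) = \sum_i d n_i x_i$, put $u := \prod_i x_i^{d n_i}$ in some fixed order. Then $u \in L_d$, since $x_i^d \in L_d$ follows from $x_1^d \in L_d$ by identification of letters, and $L_d$ is closed under products and inverses. As $q(h u^{-1}) = 0$ and both $h, u \in H$, we conclude $h u^{-1} \in H \cap K$, so $h = (h u^{-1}) u \in (H \cap K) \cdot L_s \subset (H \cap K) \vee L_s$.

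The main technical step is the structural lemma on $q(H)$; the crucial tool is the deletion endomorphism $x_k \mapsto e$ in $S$, which is precisely what lets us extract the exponent of a single letter from any element of $H$ and reduces the classification to a single integer invariant $d$. Everything else is a clean splitting argument via the abelianisation.
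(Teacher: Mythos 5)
Your proof is correct and follows essentially the same route as the paper's: both pass through the lattice isomorphism $F$, use the deletion endomorphisms of $S$ to extract $x_i^{e_i}\in H$ from the exponent sums, and split an arbitrary element of $H$ via the abelianisation into a product of such powers times an element of $H\cap[\freegrp{\infty},\freegrp{\infty}]$, with $s$ the minimal positive integer satisfying $x_1^s\in H$. Your write-up is in fact slightly more explicit than the paper's (e.g.\ in checking $D_i=D_1=d\ZZ$ and that the correction term $u$ lies in $L_d$), but the underlying argument is identical.
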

\begin{proof}
  Take a simplifiable hyperoctahedral category $\cC$ that does not contain the crossing partition.  Let $H \leq \freegrp{\infty}$ be the $S$-invariant subgroup of $\freegrp{\infty}$ associated with $\cC$ by Theorem \ref{thm:F-is-isomorphism}.

Take $w \in H$.  The exponent of $x_i$ in $w$ is by definition the sum of the powers of $x_i$ that appear in $w$.  Denote by $e_i$ the exponent of $x_i$.  For all $i$, we obtain $x_i^{e_i} \in H$, by applying to $w$ the endomorphism of $\freegrp{\infty}$ that erases all letters of $w$ except for $x_i$.  For later use, note that since $[\freegrp{\infty}, \freegrp{\infty}]$ is the kernel of the abelianisation map $\freegrp{\infty} \ra \ZZ^{\infty}$, we can write for some $n \in \NN$ and for some word $c \in H \cap [\freegrp{\infty}, \freegrp{\infty}]$
\[
  w
  =
  x_1^{e_1} \dotsm x_n^{e_n} (x_n^{-e_n} \dotsm x_1^{-e_1} w)
  =
  x_1^{e_1} \dotsm x_n^{e_n}c
  \eqstop
\]
Let $s$ be the minimal number such that $x_1^s \in H$.  By the previous decomposition of words we see that $H$ is generated as an $S$-invariant subgroup by $x_1^s$ and by $H \cap [\freegrp{\infty}, \freegrp{\infty}]$.  Appealing to the correspondence between simplifiable hyperoctahedral categories of partitions and $S$-invariant subgroups of $\freegrp{\infty}$ in Theorem \ref{thm:F-is-isomorphism}, we have finished the proof.
\end{proof}

\begin{proposition}
\label{prop:maximal-hyperoctahedral-quantum-groups}
  Every simplifiable hyperoctahedral quantum group $G\neq H_n$ has $H_n^{(s)}$ as a quantum subgroup for some $s \geq 3$.  Equivalently, every simplifiable hyperoctahedral category of partitions that does not contain the crossing $\crosspart$ is contained in $\langle \halflibpart, \vierpart, h_s \rangle$ for some $s \geq 3$.
\end{proposition}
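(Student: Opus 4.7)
The plan is to work on the categorical side via the anti-equivalence between easy quantum groups and categories of partitions (Theorem \ref{thm:classification-easy-quantum-groups-by-partitions}): under this correspondence, the quantum subgroup inclusion $H_n^{(s)} \subset G$ becomes the categorical inclusion $\cC \subset \langle \halflibpart, \vierpart, h_s \rangle$, where $\cC$ is the category associated with $G$. Moreover, the hypothesis $G \neq H_n$ translates into $\crosspart \notin \cC$: by the classification reviewed in Section \ref{sec:review-classification}, $\langle \crosspart, \vierpart \rangle$ is the \emph{only} hyperoctahedral category containing the crossing partition, and it corresponds to $H_n$. So it suffices to prove the equivalent statement about $\cC$.

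First I would split into two cases according to whether $\cC \subset \langle \halflibpart, \vierpart \rangle$ or not. In the first case the conclusion is immediate, since for any $s \geq 3$ one has $\cC \subset \langle \halflibpart, \vierpart \rangle \subset \langle \halflibpart, \vierpart, h_s \rangle$. In the second case, Proposition \ref{prop:reduction-of-classification} gives an integer $s \geq 2$ with $\cC = \langle \cC^0, h_s \rangle$, where $\cC^0 = \cC \cap \langle \halflibpart, \vierpart \rangle$. Since $\cC^0 \subset \langle \halflibpart, \vierpart \rangle$, this immediately yields $\cC = \langle \cC^0, h_s \rangle \subset \langle \halflibpart, \vierpart, h_s \rangle$, provided one can guarantee $s \geq 3$.

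The main obstacle is therefore to rule out $s = 2$. For this I would argue that the crossing partition $\crosspart \in P(2,2)$ is, up to rotation, exactly the partition $h_2 \in P(0,4)$: both consist of two crossing two-blocks, i.e.\ of the word $abab$. Consequently $h_2 \in \cC \Leftrightarrow \crosspart \in \cC$, and since $\crosspart \notin \cC$ by assumption, we get $s \neq 2$, hence $s \geq 3$, finishing the proof.

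As a sanity check, the same argument can be read on the group-theoretic side through Theorem \ref{thm:F-is-isomorphism}: the elements $h_s$ correspond to $x_1^s \in \freegrp{\infty}$, and the proof of Proposition \ref{prop:reduction-of-classification} shows that any proper $S$-invariant subgroup $H \leq \freegrp{\infty}$ is generated as an $S$-invariant subgroup by $H \cap [\freegrp{\infty},\freegrp{\infty}]$ together with $x_1^s$, where $s \geq 2$ is the minimal exponent with $x_1^s \in H$ (if such $s$ exists). Assuming $x_1^2 \notin H$ (i.e.\ $\crosspart \notin \cC$) forces $s \geq 3$, which reconfirms the categorical argument above.
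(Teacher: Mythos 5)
Your proof is correct and follows essentially the same route as the paper, which deduces the statement from Proposition \ref{prop:reduction-of-classification} via the same dichotomy ($\cC \subset \langle \halflibpart, \vierpart \rangle$ versus $\cC = \langle \cC^0, h_s\rangle$). Your explicit exclusion of $s=2$ -- observing that $h_2$ is a rotation of $\crosspart$, so $h_2 \in \cC$ would force $G = H_n$ -- is a detail the paper leaves implicit, and it is a worthwhile addition since Proposition \ref{prop:reduction-of-classification} only guarantees $s \geq 2$.
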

\begin{proof}
  This follows from Proposition \ref{prop:reduction-of-classification}: let $G \neq H_n$ be a simplifiable hyperoctahedral quantum group.  Then either $G$ contains $H_n^*$ or it contains $H_n^{(s)} = H_n^* \cap H_n^{[s]}$ for some $s \geq 3$.
\end{proof}


\section{The \Cstar-algebras associated to the simplifiable hyperoctahedral categories}
\label{sectCStarAlg}

Given a category of partitions $\cC$, we denote by $(A_\cC(n), u_n)$ the compact matrix quantum group with fundamental corepresentation of size $n \times n$ associated with $\cC$.  In this section, we study the \Cstar-algebras associated with the categories $\cC = \Katprim$, denoted by $\AKat = \rC(H_n^{[\infty]})$.  We also study the $\Cstar$-algebras $\AKat$ for $\mathcal C=\Katfat$, since some of their theory is similar.

Recall that the hyperoctahedral quantum group $H_n^+$ corresponds to the category $\langle\vierpart\rangle$. If $G\subset O_n^+$ is a compact quantum subgroup of $O_n^+$ and $u$ denotes the fundamental corepresentation of $\rC(G)$, then the map $T_p$ for $p=\vierpart$ is in the intertwiner space $\Hom(1, u^{\ot 4})$ if and only if $u_{ik}u_{jk}=u_{ki}u_{kj}=0$ whenever $i\neq j$. Hence, this relation is fulfilled for all compact quantum subgroups $G\subset H_n^+$. We also have $\sum_k u_{ik}^2=\sum_ku_{kj}^2=1$, as well as $u_{ij}=u_{ij}^*$, for all $i,j$ (see \cite{weber12} for the relations of $C(H_n^+)$).

Recall also the definition of the linear maps $T_p:(\C^n)^{\otimes k}\to(\C^n)^{\otimes l}$ indexed by a partition  $p\in P(k,l)$:
\[
  T_p(e_{i(1)}\otimes \dotsm \otimes e_{i(k)})
  =
  \sum_{j(1),\ldots,j(l)=1}^n \delta_p(i,j) \cdot e_{j(1)}\otimes \dotsm \otimes e_{j(l)}
  \eqstop
\]
Here, $e_1,\ldots,e_n$ is the canonical basis of $\C^n$, and $\delta_p(i,j)=1$ if and only if the indices of $i=(i(1), \dotsc, i(k))$ and $j=(j(1), \dotsc, j(l))$ that are connected by the partition $p$ coincide.  Otherwise $\delta_p(i,j)=0$.
(cf. \cite[Definitions 1.6 and 1.7]{banicaspeicher09})

\begin{lemma}
\label{lem:relations-and-intertwiners}
 Let $G\subset H_n^+$ be a compact quantum subgroup of $H_n^+$ and denote by $\rC(G)$ its corresponding \Cstar-algebra generated by the entries of the fundamental corepresentation $u_{ij}$, $i,j = 1, \dotsc, n$.  Then
\begin{enumerate}
 \item $T_p \in \Hom(u^{\ot 4}, u^{\ot 4})$ for $p = \fatcrosspart$ if and only if $u_{ij}^2u_{kl}^2=u_{kl}^2u_{ij}^2$ for all $i,j,k,l$.
 \item $T_p \in \Hom(u^{\ot 3}, u^{\ot 3})$ for $p = \primarypart$ if and only if $u_{ij}u_{kl}^2=u_{kl}^2u_{ij}$ for all $i,j,k,l$.
\end{enumerate}
\end{lemma}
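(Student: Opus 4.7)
The plan is to expand the intertwining condition in terms of matrix coefficients, exploit the hyperoctahedral relations already available in $\rC(G)$, and read off the desired commutation relations. Concretely, by the definition of $T_p$ and of $u^{\ot k}$, the condition $T_p u^{\ot k} = u^{\ot l} T_p$ in $\bo((\CC^n)^{\ot k},(\CC^n)^{\ot l}) \ot \rC(G)$ is, after taking the $(j,i)$-matrix coefficient for multi-indices $j \in [n]^l$, $i \in [n]^k$, equivalent to
\[
  \sum_{r \in [n]^k} \delta_p(r,j)\, u_{r_1 i_1} \dotsm u_{r_k i_k}
  \;=\;
  \sum_{s \in [n]^l} u_{j_1 s_1} \dotsm u_{j_l s_l}\, \delta_p(i,s)
  \eqstop
\]
So part (i) and part (ii) each reduce to checking that this family of identities (over all $i,j$) is equivalent to the stated relation on squares.

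The key simplification is that because $G \leq H_n^+$, one has $u_{ab}u_{ac}=u_{ba}u_{ca}=0$ for $b\neq c$, which forces both sides of the above equation to vanish whenever the index pattern is inconsistent with the blocks of $p$. For $p=\fatcrosspart$, the blocks are $\{1,2,3',4'\}$ and $\{3,4,1',2'\}$, so the LHS is nonzero only when $j_1=j_2=:\alpha$, $j_3=j_4=:\beta$; the $\delta_p(r,j)$-factor then collapses $r$ to $r_1=r_2=\beta$, $r_3=r_4=\alpha$, and the two products $u_{\beta i_1}u_{\beta i_2}$, $u_{\alpha i_3}u_{\alpha i_4}$ furthermore vanish unless $i_1=i_2=:\gamma$, $i_3=i_4=:\delta$. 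In that compatible case the LHS equals $u_{\beta\gamma}^2 u_{\alpha\delta}^2$, and a symmetric analysis of the RHS gives $u_{\alpha\delta}^2 u_{\beta\gamma}^2$. Thus the intertwining condition holds precisely when $u_{\beta\gamma}^2 u_{\alpha\delta}^2 = u_{\alpha\delta}^2 u_{\beta\gamma}^2$ for all $\alpha,\beta,\gamma,\delta$.

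For part (ii), the blocks of $\primarypart$ are $\{1,2,2',3'\}$ and $\{3,1'\}$. The same index bookkeeping shows that both sides vanish unless $j_2=j_3=:\beta$, $j_1=:\alpha$, $i_1=i_2=:\gamma$, $i_3=:\delta$. In the compatible case the LHS reduces to $u_{\beta\gamma}^2 u_{\alpha\delta}$ and the RHS to $u_{\alpha\delta} u_{\beta\gamma}^2$, yielding the claimed relation $u_{ij}u_{kl}^2 = u_{kl}^2 u_{ij}$.

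Both directions come out of the same calculation: if $T_p$ is an intertwiner, the matrix coefficients in the compatible index pattern directly give the commutation relation, while conversely the commutation relation together with the hyperoctahedral vanishing covers every choice of $(i,j)$ (since non-compatible patterns are zero on both sides for purely algebraic reasons). The only point requiring minor care is the exhaustive case analysis of which parts of the sums vanish; the hyperoctahedral relations handle every such case uniformly, so no further obstacle arises.
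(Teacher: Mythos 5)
Your proof is correct and is exactly the computation the paper has in mind: its own proof consists of the single instruction to compare $(T_p\otimes 1)u^{\otimes k}$ with $u^{\otimes k}(T_p\otimes 1)$, which you carry out explicitly, including the correct use of the $H_n^+$ row/column orthogonality relations to dispose of the incompatible index patterns. No gaps.
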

\begin{proof} 
Compare $u^{\otimes 4}(T_p\otimes 1)$ with $(T_p\otimes 1)u^{\otimes 4}$ for (i) and analogous for (ii).
\end{proof}

We will now describe the \Cstar-algebras corresponding to the categories $\langle\fatcrosspart\rangle$ and $\langle\primarypart\rangle$. 

\begin{proposition}\label{GestaltAprimary}
The \Cstar-algebras $\AKat$ associated with $\Katfat$ and $\Katprim$ are universal \Cstar-algebras generated by elements $u_{ij}$, $i,j,=1,\ldots,n$ such that
\begin{enumerate}
 \item the $u_{ij}$ are local symmetries (i.e. $u_{ij}=u_{ij}^*$ and $u_{ij}^2$ is a projection),
 \item the projections $u_{ij}^2$ fulfill $\sum_k u_{ik}^2=\sum_ku_{kj}^2=1$ for all $i,j$,
 \item in the case $\cC=\Katfat$, we also have $u_{ij}^2u_{kl}^2=u_{kl}^2u_{ij}^2$ for all $i,j,k,l$,
 \item in the case $\cC=\Katprim$, we even have $u_{ij}^2u_{kl}=u_{kl}u_{ij}^2$ for all $i,j,k,l$.
\end{enumerate}
\end{proposition}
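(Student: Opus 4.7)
The plan is to identify $A_\mathcal C(n)$ with the universal $\Cstar$-algebra $B$ defined by the listed relations, for each $\mathcal C \in \{\langle\fatcrosspart\rangle,\langle\primarypart\rangle\}$, by constructing mutually inverse surjections. This amounts to matching the relations in the statement with the intertwiner relations arising from Tannaka-Krein duality (Theorem \ref{thm:tannaka-krein-duality}). Since each of these categories is generated, on top of the trivial $\paarpart$ and $\idpart$, by one partition, it suffices to encode the relations coming from $\paarpart$ (orthogonality of $u$), from $\vierpart$ (the full $H_n^+$-structure, which lives in $\mathcal C$ by Lemma \ref{LemBezBasePart}) and from the generator $\fatcrosspart$ or $\primarypart$ itself.

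For the first direction, I would verify that $A_\mathcal C(n)$ satisfies the listed relations. The discussion preceding Lemma \ref{lem:relations-and-intertwiners} together with $\vierpart \in \mathcal C$ directly yields (i) and (ii), while the commutation relations (iii) and (iv) follow from Lemma \ref{lem:relations-and-intertwiners} applied respectively to $\fatcrosspart \in \langle\fatcrosspart\rangle$ and $\primarypart \in \langle\primarypart\rangle$. By the universal property of $B$, this produces a surjective $*$-homomorphism $B \twoheadrightarrow A_\mathcal C(n)$ sending generators to generators.

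For the converse direction, I would show that $B$ itself carries a CMQG structure with fundamental corepresentation $u$ whose intertwiner category contains all $T_p$ for $p \in \mathcal C$. The delicate step is case (iii), where only the squares $u_{ij}^2$ are a priori known to commute: I would deduce the full hyperoctahedral orthogonality $u_{ij}u_{kj}=0$ for $i\ne k$ (and the analogous row identity) in two moves. First, the projections $u_{ij}^2$ commute with each other and satisfy $\sum_i u_{ij}^2=1$, so they are pairwise orthogonal within a column, giving $u_{ij}^2u_{kj}^2=0$ for $i\ne k$. Second, using $u_{kj}=u_{kj}^3$ one finds
\[
 (u_{ij}u_{kj})^\ast(u_{ij}u_{kj}) = u_{kj}u_{ij}^2u_{kj} = u_{kj}\,u_{ij}^2u_{kj}^2\,u_{kj} = 0 \eqstop
\]
Case (iv) is analogous and more direct, since (iv) already commutes $u_{ij}$ past $u_{kj}^2$. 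Combined with (ii) these orthogonality relations imply that $u$ is unitary in $M_n(B)$. A routine computation then shows that $\Delta(u_{ij})=\sum_k u_{ik}\otimes u_{kj}$ respects all defining relations (using the newly obtained row/column orthogonality to collapse $\Delta(u_{ij})^2$ to the diagonal sum $\sum_k u_{ik}^2\otimes u_{kj}^2$, and then (iii) or (iv) to propagate the commutation), making $B$ a CMQG. By Lemma \ref{lem:relations-and-intertwiners}, $T_{\fatcrosspart}$ (resp.\ $T_{\primarypart}$) is an intertwiner in $B$; since every partition in $\mathcal C$ is built from $\paarpart$, $\idpart$ and this generator via category operations, Woronowicz's Tannaka-Krein theorem yields a surjection $A_\mathcal C(n)\twoheadrightarrow B$. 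Both surjections map fundamental generators to fundamental generators, so their compositions are identities and they are mutually inverse isomorphisms.

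The main obstacle is precisely the bootstrap from squared commutation (iii) to full orthogonality of $u$ in the $\langle\fatcrosspart\rangle$ case; everything else is a translation between the combinatorial side (Lemma \ref{lem:relations-and-intertwiners}) and the universal $\Cstar$-algebra side, plus a routine check that the coproduct formula preserves the defining relations.
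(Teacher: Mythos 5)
Your proof is correct and follows essentially the same route as the paper: the forward direction is exactly the application of Lemma \ref{lem:relations-and-intertwiners}, and the universality direction rests on the observation that relations (i)--(ii) alone present $\rC(H_n^+)$, so that adjoining the generator $\fatcrosspart$ resp.\ $\primarypart$ amounts precisely to adjoining relation (iii) resp.\ (iv). The only difference is one of detail: the paper cites the known presentation of $\rC(H_n^+)$ from \cite{weber12}, whereas you rederive the row and column orthogonality $u_{ij}u_{kj}=0$ from (i)--(ii) by the partial-isometry bootstrap and then verify the CMQG structure on the universal algebra explicitly --- a correct and welcome justification of what the paper leaves implicit.
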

\begin{proof} 
The \Cstar-algebras $\AKat$ fulfill the relations of $C(H_n^+)$, the \Cstar-algebra associated with the free hyperoctahedral quantum group $H_n^+$.  It follows that $u_{ij}^2$ is a projection, since $u_{ij}^2=u_{ij}^2(\sum_k u_{ik}^2)=\sum_ku_{ij}(u_{ij}u_{ik})u_{ik}=u_{ij}^4$.  Lemma \ref{lem:relations-and-intertwiners} shows that in addition the relations (iii) or (iv) hold, respectively.

In order to show, that $\AKat$ is universal with the above relations, note that $\rC(H_n^+)$ is universal with the relations in (i) and (ii).  So $\AKat$ is the quotient of this universal \Cstar-algebra, by the relations imposed by Lemma \ref{lem:relations-and-intertwiners}.  So it is the universal \Cstar-algebra for the relations (i), (ii) and (iii) or (i), (ii) and (iv), respectively.
\end{proof}

This proposition shows that the elements $u_{ij}^2$ fulfill the relations of $S_n$, the (classical) permutation group (or rather of $C(S_n)$). The squares of the elements $u_{ij}$ of the above \Cstar-algebras $\AKat$ thus behave like commutative elements, whereas the $u_{ij}$ itself behave like free elements. The quantum groups $\AKat$ are hence somewhat in between the commutative and the (purely) non-commutative world.

\begin{remark}
  From the description of their \Cstar-algebras in Proposition \ref{GestaltAprimary}, we can deduce that $\langle \fatcrosspart \rangle \neq \langle \primarypart \rangle$ by showing that the canonical quotient map from $\AKat$ to $\AKatprime$ for $\mathcal C=\Katfat$, $\mathcal C'=\Katprim$ is not an isomorphism.  Indeed take $n = 3$ and let $H = \CC^3$ with its canonical basis $e_1, e_2, e_3$.  Denote by $p_i$ the projection of $H$ onto $\CC e_i$, $i \in \{1, 2, 3\}$.  We define operators $w_{ij}$, $1 \leq i,j \leq 3$ on $H$, which define a representation of $\AKat$ but not of $\AKatprime$.  Let $w_{11}e_1 = e_2$, $w_{11}e_2 = e_1$ and $w_{11}e_3 = 0$ and $w_{22} = p_1$.  Then $w_{11}^2 w_{22}^2 = w_{22}^2 w_{11}^2$, but $w_{11} w_{22}^2 \neq w_{22}^2 w_{11}$.  Define
\begin{align*}
  w_{21} & = w_{12} = p_3 \eqcomma \quad w_{31} = w_{13} = 0 \\
  w_{23} & = w_{32} = p_2 \eqcomma \quad w_{33}  = p_1 + p_3 \eqstop
\end{align*}
Then $(w_{ij})_{ij}$ induces a representation of $\AKat$, which does not factor through $\AKat \ra \AKatprime$.
\end{remark}

\begin{remark}
\begin{enumerate}
\item If $\pi: \rC(H_n^{[\infty]}) \to \bo(H)$ is an irreducible representation, the projections $\pi(u_{ij}^2)$ are either 1 or 0, since they commute with all elements of $\pi(\rC(H_n^{[\infty]}))$.  Since $\sum_k\pi(u_{ik}^2)=\sum_k\pi(u_{ki}^2)=1$, there is a permutation $\gamma\in S_n$ such that $\pi(u_{ij}^2)=1$, if $\gamma(i)=j$, and $\pi(u_{ij}^2) = 0$, otherwise.  Recall, that the full group \Cstar-algebra $\Cstarmax(\Z_2^{*n})$ is isomorphic to the $n$-fold unital free product of the \Cstar-algebra $\Cstarmax(\Z_2)=\C^2$.  The latter is the universal \Cstar-algebra generated by a symmetry.  Thus, $\Cstarmax(\Z_2^{*n})$ is the universal unital \Cstar-algebra generated by $n$ symmetries $w_1,\ldots,w_n$.  So, the map $\pi_0(w_i):=\pi(u_{i\gamma(i)})$ defines a representation of $\Cstarmax(\Z_2^{*n})$.
\item Vice versa, we can produce representations of $\rC(H_n^{[\infty]})$ by permutations.  First note that the relations of $\rC(H_n^{[\infty]})$ are invariant under permutation of rows or columns of its fundamental corepresentation.  Let $\pi_0:\Cstarmax(\Z_2^{*n}) \to \bo(H)$ be a representation and let $\gamma\in S_n$ be a permutation.  The map $\rC(H_n^{[\infty]}) \ra \Cstarmax(\ZZ_2^{*n}): u_{ij} \mapsto \delta_{j \gamma(i)} \cdot w_i$ is well defined.  So $\pi(u_{ij}):=\delta_{j\gamma(i)}\pi_0(w_i)$ defines a (not necessarily irreducible) representation of $\rC(H_n^{[\infty]})$.
\end{enumerate}
\end{remark}

In the \Cstar-algebra associated with a simplifiable hyperoctahedral category $\cC$, the relations on the generators may be read directly from the partitions in single leg form. Let $p=a_{i(1)} \dotsm a_{i(k)} \in P(0,k)$ be a partition without upper points in single leg form. We consider $p$ as a word in the letters $a_1, \dotsc ,a_m$ (labelled from left to right). If we replace the letters $a_i$, $1 \leq i \leq m$, in $p$ by some choice of generators $u_{ij}$, $1 \leq i,j \leq n$, we obtain an element $a_{i(1)} \dotsm \, a_{i(k)} \in \AKat$; replacing the letters by the according elements $u_{ij}^2$ yields a projection $q \in \AKat$.

\begin{proposition}
\label{PropWordPartialIsom}
 Let $\mathcal C$ be a simplifiable hyperoctahedral category and let $p = a_{i(1)} \dotsm a_{i(k)}$ be a partition in single leg form.
 The following assertions are equivalent:
\begin{enumerate}
 \item $p \in \mathcal C$.
 \item $a_{i(1)} \dotsm \, a_{i(k)} = q$ in $\AKat$ for all choices $a_r \in \{u_{ij} \; | \; i,j = 1, \dotsc, n\}$, $1 \leq r \leq m$, where $q$ is the according range projection.
 \item For some $1 \leq s \leq k$, we have $q a_{i(1)}\dotsm \, a_{i(s)} = q a_{i(k)} \dotsm a_{i(s+1)}$ in $\AKat$ for all choices $a_r \in \{u_{ij} \; | \; i,j = 1, \dotsc, n\}$, $1 \leq r \leq m$, where $q$ is the according range projection.
\end{enumerate}
\end{proposition}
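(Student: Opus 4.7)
My plan is to prove the cycle $(i) \Rightarrow (ii) \Rightarrow (iii) \Rightarrow (i)$. Before starting, I record two general facts that hold in $\AKat$ whenever $\mathcal C$ is simplifiable hyperoctahedral, using Proposition \ref{GestaltAprimary} together with the earlier identification that $\vierpart, \primarypart \in \mathcal C$. First, the generators satisfy row and column orthogonality $u_{ij} u_{ij'} = u_{ij} u_{i'j} = 0$ for $j \neq j'$ or $i \neq i'$, and the central commutation $u_{ij}^2 u_{kl} = u_{kl} u_{ij}^2$. Second, the absorption identity $wq = qw = w$ holds purely algebraically in $\AKat$: each factor $a_r^2$ of $q$ commutes past all other generators, and can therefore be brought adjacent to an appearance of $a_r$ in $w$ and absorbed via $a_r \cdot a_r^2 = a_r^3 = a_r$. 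This uses that every letter $a_r$ actually appears in $w$, which is guaranteed by the fact that $p$ has exactly $m$ blocks each containing at least one point.

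For $(i) \Rightarrow (ii)$: the intertwiner condition $T_p \in \Hom(1, u^{\otimes k})$, specialised to multi-indices $i$ constant on the blocks of $p$ with block-values $I = (I_1, \dotsc, I_m)$, reads
\[
\sum_{J \in \{1,\dotsc,n\}^m} u_{I_{i(1)} J_{i(1)}} \dotsm u_{I_{i(k)} J_{i(k)}} = 1.
\]
Fix any $J^0$ and multiply on the right by $q = \prod_r u_{I_r J^0_r}^2$. Commuting each factor $u_{I_r J^0_r}^2$ of $q$ to stand immediately to the right of the last occurrence of a factor $u_{I_r \cdot}$ in the product, row orthogonality kills the term unless $J_r = J^0_r$. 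Iterating across all $r$, only the single term $J = J^0$ survives and we obtain $w \cdot q = q$. Combined with $wq = w$, this gives $w = q$, which is $(ii)$. The implication $(ii) \Rightarrow (iii)$ is then immediate on taking $s = k$: $w_1 = w$, the product $w_2$ is empty, and both sides of $(iii)$ reduce to $q \cdot q = q$ using $(ii)$.

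For $(iii) \Rightarrow (i)$ I multiply $q w_1 = q w_2^*$ on the right by $w_2$. By the same absorption calculation used for $wq = w$, the element $w_2^* w_2$ simplifies to a commuting sub-product $q_L$ of $q$, so $q w_2^* w_2 = q\, q_L = q$; the left side is $qw$, giving $qw = q$. Combined once more with $qw = wq = w$, this produces $w = q$, i.e. $(ii)$. It remains to extract from $(ii)$ the intertwiner condition $T_p \in \Hom(1, u^{\otimes k})$ and conclude $p \in \mathcal C$ via Theorem \ref{thm:classification-easy-quantum-groups-by-partitions}. For $i$ constant on blocks, summing the identity $w(I,J) = q(I,J)$ over $J$ recovers $\sum_j \delta_p(j) u_{ij} = \prod_r \sum_{J_r} u_{I_r J_r}^2 = 1 = \delta_p(i)$. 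For $i$ not constant on some block, the desired vanishing $\sum_j \delta_p(j) u_{ij} = 0$ is obtained by invoking $(ii)$ on the specialisation in which two letters are assigned generators sharing a row or column: both $w$ and $q$ are then simultaneously forced to $0$ by the orthogonality relations, which, combined with the summation over the remaining free indices, makes each block-violating term of the off-diagonal sum telescope to zero. The most delicate step in the whole proof is exactly this final passage, as the off-diagonal sub-products are not themselves of the form $w$ and must be handled by careful interplay between row/column orthogonality, the central commutation, and the full strength of $(ii)$ across all generator specialisations.
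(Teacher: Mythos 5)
Your cycle $(i)\Rightarrow(ii)\Rightarrow(iii)\Rightarrow(i)$ is structurally sound, and most of it is correct and essentially identical to the paper's computations: the absorption identity $wq=qw=w$ from centrality of the squares, the collapse of $\sum_J$ against $q$ by row orthogonality to get $(i)\Rightarrow(ii)$, the choice $s=k$ for $(ii)\Rightarrow(iii)$, the cancellation of $w_2^*w_2$ to a subproduct of $q$ to reduce $(iii)$ to $(ii)$, and the summation $\sum_J q(I,J)=1$ for the case $\delta_p(i)=1$ of $(ii)\Rightarrow(i)$. The gap is the final case $\delta_p(i)=0$ of $(ii)\Rightarrow(i)$. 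What you write there --- that this is ``the most delicate step'', handled by ``invoking $(ii)$ on the specialisation in which two letters are assigned generators sharing a row or column'' and a ``telescoping'' of off-diagonal terms --- is not a proof, and the mechanism you gesture at is not the right one: as you yourself observe, the summands with $\delta_p(j)=1$ but $\delta_p(i)=0$ assign different generators to positions in one and the same block of $p$, so they are not instances of $(ii)$ and $(ii)$ cannot be invoked on them; moreover no cancellation between distinct terms is available or needed.

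The correct argument is termwise and uses no part of $(ii)$. If $\delta_p(i)=0$ and $\delta_p(j)=1$, there are two positions $t,t'$ in the same block of $p$ with $j(t)=j(t')$ but $i(t)\neq i(t')$. The product $u_{i(1)j(1)}\dotsm u_{i(k)j(k)}$ then contains the two local symmetries $u_{i(t)j(t)}$ and $u_{i(t')j(t)}$, lying in the same column but in different rows; by column orthogonality $u_{i(t)j(t)}u_{i(t')j(t)}=0$, hence their support projections satisfy $u_{i(t)j(t)}^2u_{i(t')j(t)}^2=0$, and since these supports are central in $\AKat$ (here is where $\primarypart\in\cC$ enters) and are absorbed by the corresponding factors of the product, the whole product vanishes. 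Thus every summand of $\sum_j \delta_p(j)\,u_{i(1)j(1)}\dotsm u_{i(k)j(k)}$ is individually zero, which is exactly how the paper closes this case. With this one-line replacement your proof is complete and coincides with the paper's.
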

\begin{proof} 
The linear map $T_p: \C \to (\C^n)^{\otimes k}$ associated with $p$ is given by
\[
  T_p(1)
  =
  \sum_{i(1), \dotsc, i(k) = 1}^n
  {
    \delta_p(i)e_{i(1)} \otimes \dotsm \otimes e_{i(k)}
  }
  \eqstop
\]
We have
\[
  u^{\otimes k} (T_p \otimes 1)(1 \ot 1)
  =
  \sum_{i(1), \dotsc, i(k) = 1}^n
  {
    e_{i(1)} \otimes \dotsm \otimes e_{i(k)} \otimes
    \left( \sum_{j(1), \dotsc, j(k) = 1}^n \delta_p(j) \cdot u_{i(1) j(1)} \dotsm \, u_{i(k)j(k)} \right)
  }
  \eqcomma
\]
so that $p \in \mathcal C$, if and only if for all multi-indices $i=(i(1), \dotsc ,i(k))$ the equation:
\[
  \sum_{j(1), \dotsc, j(k) = 1}^n
  {
    \delta_p(j) \cdot u_{i(1) j(1)} \dotsm \, u_{i(k)j(k)}
  }
  =
  \delta_p(i)
\]
holds.  Now, assume (i) and let us show (ii).  Make a choice of $a_r \in \{ u_{ij} \amid i,j = 1, \dotsc, n \}$ for all ${r \in \{1, \dotsc, m\}}$.  Then there are multi-indices $i$ and $j$ satisfying $\delta_p(i) = \delta_p(j) = 1$ such that $a_{i(1)} \dotsm \, a_{i(k)} = u_{i(1)j(1)} \dotsm \, u_{i(k) j(k)}$.  Let $q$ be the projection given by $q := u_{i(1)j(1)}^2 \dotsm \, u_{i(k) j(k)}^2$.  Then
\begin{align*}
  u_{i(1)j(1)} \dotsm u_{i(k)j(k)}
  & = 
  u_{i(1)j(1)} \dotsm u_{i(k)j(k)} \left( \sum_{r(1), \dotsc, r(k) = 1}^n \delta_p(r) \cdot u_{i(k) r(k)} \dotsm \, u_{i(1)r(1)} \right) \\
  & =
  \sum_{r(1), \dotsc, r(k) = 1}^n
    \delta_p(r) \cdot u_{i(1)j(1)} \dotsm \, u_{i(k -1)j(k - 1)} \\
    & \qquad \qquad
    (\delta_{j(k) r(k)} u_{i(k)j(k)}^2)
    u_{i(k -1)r(k - 1)} \dotsm u_{i(1)r(1)}
   \\
  & = 
  \sum_{r(1), \dotsc, r(k) = 1}^n \delta_p(r) \cdot \delta_{i(1) r(1)} u_{i(1)j(1)}^2 \dotsm \, \delta_{i(k)r(k)} u_{i(k) j(k)}^2 \\
  & =
  q
  \eqstop
\end{align*}
This proves (ii). 
Conversely, assume (ii) and let $i$ be any multi-index.  If $\delta_p(i) = 0$, then $u_{i(1)j(1)} \dotsm \, u_{i(k)j(k)} = 0$ for any multi-index $j$ that satisfies $\delta_p(j) = 1$, since in this product there are at least two local symmetries that have mutually orthogonal support in the centre of $\AKat$.  Hence 
\[
  \sum_{j(1), \dotsc, j(k) = 1}^n \delta_p(j) \cdot u_{i(1)j(1)} \dotsm \, u_{i(k)j(k)}
  =
  0
  \eqstop
\]
Similarly, using the assumption (ii), if $\delta_p(i) = 1$, then 
\[
  u_{i(1)j(1)}^2 \dotsm \, u_{i(k)j(k)}^2
  =
  \begin{cases}
    u_{i(1)j(1)} \dotsm \, u_{i(k)j(k)} \eqcomma & \text{if } \delta_p(j) = 1 \eqcomma \\
    0 \eqcomma & \text{otherwise.}
  \end{cases}
\]
We obtain that
\[
  \sum_{j(1), \dotsc, j(k) = 1}^n \delta_p(j) \cdot u_{i(1)j(1)} \dotsm \, u_{i(k)j(k)}
  =
  \sum_{j(1), \dotsc, j(k) = 1}^n u_{i(1)j(1)}^2 \dotsm \, u_{i(k)j(k)}^2
  =
  1
  \eqstop
\]
This proves (i).  The assertions (ii) and (iii) are equivalent, since all projections $a_r^2$, $1 \leq r \leq m$ are absorbed by $q$ and $qa_{i(1)} \dotsm \, a_{i(k)} = a_{i(1)} \dotsm \, a_{i(k)}$.
\end{proof}

Let us recall the notion of a coopposite quantum group.  If $(A, \Delta)$ is a compact quantum group, then its coopposite version is the quantum group $(A, \Sigma \circ \Delta)$, where ${\Sigma: A \ot A \ra A \ot A}$ is the flip.  In particular, if $(A, u)$ is a compact matrix quantum group, its coopposite version is the compact matrix quantum group $(A, u^t)$.

\begin{corollary}
\label{cor:coopposite}
  Every easy quantum subgroup of $(\rC(H_n^{[\infty]}), u_{\mathrm{simpl}})$ is isomorphic to its coopposite version.
\end{corollary}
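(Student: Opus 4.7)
The plan is to exhibit the transpose map $\phi: u_{ij} \mapsto u_{ji}$ as the required isomorphism. Concretely, I want to show that $\phi$ extends to a $*$-automorphism of the underlying \Cstar-algebra $\AKat$ (where $\cC$ is a simplifiable hyperoctahedral category) and that it intertwines the comultiplication $\Delta$ with its flip $\Sigma \circ \Delta$. Since the paper defines CMQG isomorphism as CQG isomorphism, this will suffice.

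First, I would verify that $(u_{ji})_{i,j}$ satisfies the defining relations of $\AKat$, so that $\phi$ is well-defined. By Proposition \ref{GestaltAprimary}, the relations inherited from $\rC(H_n^{[\infty]})$ are self-adjointness of each $u_{ij}$, the row and column conditions $\sum_k u_{ik}^2 = \sum_k u_{kj}^2 = 1$, and the centrality $u_{ij}^2 u_{kl} = u_{kl} u_{ij}^2$. All of these are manifestly symmetric in the role of the two indices. The remaining relations defining the quotient $\AKat$ come from partitions $p \in \cC$. By Proposition \ref{PropCSL}(a), these can be taken to be associated with partitions in single leg form, and by the equivalence (i)$\Leftrightarrow$(ii) of Proposition \ref{PropWordPartialIsom}, each such relation has the form
\[
  u_{\mathbf{i}(1)\mathbf{j}(1)} \cdots u_{\mathbf{i}(k)\mathbf{j}(k)} = u_{\mathbf{i}(1)\mathbf{j}(1)}^2 \cdots u_{\mathbf{i}(k)\mathbf{j}(k)}^2
\]
valid for every pair of multi-indices $\mathbf{i}, \mathbf{j}$ with $\delta_p(\mathbf{i}) = \delta_p(\mathbf{j}) = 1$. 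Applying $\phi$ to this relation produces the corresponding equation with $\mathbf{i}$ and $\mathbf{j}$ swapped, which is once again an instance of the same relation because the hypothesis $\delta_p(\mathbf{i}) = \delta_p(\mathbf{j}) = 1$ is symmetric. Hence $\phi$ extends to a $*$-homomorphism $\AKat \to \AKat$, and since $\phi^2$ equals the identity on generators, it is a $*$-automorphism.

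Second, I would verify the compatibility with the comultiplication. A direct computation gives
\begin{align*}
  (\phi \otimes \phi)(\Delta(u_{ij})) &= \sum_k \phi(u_{ik}) \otimes \phi(u_{kj}) = \sum_k u_{ki} \otimes u_{jk}, \\
  (\Sigma \circ \Delta)(\phi(u_{ij})) &= \Sigma\!\left(\sum_k u_{jk} \otimes u_{ki}\right) = \sum_k u_{ki} \otimes u_{jk},
\end{align*}
so $\phi$ is a CQG isomorphism from $(\AKat, \Delta)$ to the coopposite $(\AKat, \Sigma \circ \Delta)$. The same argument applies verbatim to any easy quantum subgroup $G \subset H_n^{[\infty]}$, since its category $\cC$ is still simplifiable hyperoctahedral and hence Proposition \ref{PropWordPartialIsom} applies.

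The ``hard part'' is really only the clean bookkeeping in the first step: making sure that transposition of the index pair on each generator turns a defining relation into another defining relation. The formulation of Proposition \ref{PropWordPartialIsom} is tailored to exactly this symmetry, so no substantive new computation is needed beyond recognising that the row and column multi-indices play symmetric roles in the hypothesis $\delta_p(\mathbf{i}) = \delta_p(\mathbf{j}) = 1$.
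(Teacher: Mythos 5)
Your proposal is correct and follows essentially the same route as the paper: both identify $u_{ij}\mapsto u_{ji}$ as the candidate isomorphism and justify it by observing, via Propositions \ref{GestaltAprimary} and \ref{PropWordPartialIsom}, that the defining relations of $\AKat$ are symmetric in the two indices. You merely spell out two points the paper leaves implicit, namely the explicit form of the partition relations with $\delta_p(\mathbf{i})=\delta_p(\mathbf{j})=1$ and the check that $(\phi\otimes\phi)\circ\Delta=\Sigma\circ\Delta\circ\phi$, which is a welcome amount of extra care but not a different argument.
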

\begin{proof}
Let $(\AKat, u)$ be an easy quantum subgroup of $\rC(H_n^{[\infty]}, u_{\mathrm{simpl}})$ with corresponding category of partitions $\cC$.  By Proposition \ref{PropWordPartialIsom}, $(\AKat, u)$ is the universal \Cstar-algebra such that
\begin{itemize}
\item all $u_{ij}$ are local symmetries whose support is central in $\AKat$ and sums up to $1$ in every row and every column
\item for any partition $p = a_{i(1)} \dotsm \, a_{i(k)} \in \cC$ and any choice of elements $a_r \in \{u_{ij} \amid 1 \leq i,j \leq n \}$, $1 \leq r \leq m$ we have that $a_{i(1)} \dotsm \, a_{i(k)}$ is a projection.
\end{itemize}
These relations are invariant under taking the transpose of $u$.  So $u_{ij} \mapsto u_{ji}$ is a *-automorphism of $\AKat$.  This finishes the proof.
\end{proof}

\begin{example}
 Let $p$ be the word $p=abcbcacb$ and consider the \Cstar-algebras associated with the category $\langle \primarypart, p \rangle$.
Note that the equation $abcbcacb=1$ in $\Z_2 * \Z_2 * \Z_2$ is equivalent to $abcb=bcac$. The idea is that Proposition \ref{PropWordPartialIsom} yields the commutation relations
\[
  u_{ij} u_{kl} u_{rs} u_{kl} 
  =
  u_{kl} u_{rs} u_{ij} u_{rs}
\]
in $\AKat$ with $p\in \mathcal C$ ``wherever it makes sense''. To be more precise, we have \[qu_{ij}u_{kl}u_{rs}u_{kl}=qu_{kl}u_{rs}u_{ij}u_{rs},\]
where $q=u_{ij}^2u_{kl}^2u_{rs}^2$. By multiplying from left or right with the local symmetries $u_{ij}$, $u_{kl}$, and $u_{rs}$, we also obtain relations like for instance
\[
  q u_{ij} u_{kl} u_{rs} u_{kl} u_{rs} u_{ij}
  =
  q u_{kl} u_{rs}
  \quad \textnormal{or} \quad
  u_{ij} u_{kl} u_{rs} u_{kl} u_{rs} u_{ij} u_{rs} u_{kl}
  =
  q
  \eqstop
\]
\end{example}

\begin{remark}
The K-theory for the \Cstar-algebras associated with easy quantum groups is relatively unknown.  Voigt computed the K-theory for $O_n^+$ in \cite{voigt11-baum-connes} and there are some small extensions of this result to other easy quantum groups in \cite{weber12}.  Let us also mention \cite{vergniouxvoigt11-ktheory-free}.
\end{remark}


\section{Diagonal subgroups and their quantum isometry groups}
\label{sec:realation-qiso-groups}

If $\cC$ is a simplifiable hyperoctahedral category of partitions, the group $F(\cC)$ from Theorem \ref{thm:F-is-1-1} can be recovered directly from $(A_\cC(n), u_n)_{n \geq 2}$.  Vice versa, $A_\cC(n)$ arises as a natural subgroup of the quantum isometry group of $F(\cC)$.  This is explained by the following results. 

\begin{definition}
  For $H \subset \ZZ_2^{*\infty}$ write $(H)_n = \{ w \in H \amid w \text{ only involves the letters } a_1, \dotsc, a_n\}$.
\end{definition}

\begin{definition}
  Given a compact matrix quantum group $(A,u)$, its diagonal subgroup $\diag(A,u)$ is the discrete group that is generated by the image of the diagonal entries of $u$ in the quotient \Cstar-algebra $A/\langle u_{ij} \amid i \neq j\rangle$.
\end{definition}

\begin{theorem}
\label{thm:F-and-diagonal-subgroup}
Let $\cC$ be a simplifiable hyperoctahedral category of partitions.  Then $\diag(A_\cC(n), u_n) \cong \ZZ_2^{*n} / F(\cC)_n$
\end{theorem}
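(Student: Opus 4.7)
Write $B := A_\cC(n)/\langle u_{ij} \amid i \neq j \rangle$ and denote by $\bar u_{ii}$ the image of $u_{ii}$ in $B$. Since $\sum_k u_{ik}^2 = 1$ holds in $A_\cC(n)$ (the four-block $\vierpart \in \cC$ together with the hyperoctahedral relations), killing the off-diagonal generators yields $\bar u_{ii}^2 = 1$. The elements $\bar u_{ii}$ are also self-adjoint, so they are symmetries generating $\diag(A_\cC(n), u_n)$ inside the unitary group of $B$. Hence we obtain a surjective group homomorphism from the universal group $\ZZ_2^{*n}$ onto $\diag(A_\cC(n), u_n)$ by sending $a_i \mapsto \bar u_{ii}$, and the task is to identify its kernel with $F(\cC)_n$.

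The first step is to check that this map factors through $\ZZ_2^{*n}/F(\cC)_n$, yielding a surjective homomorphism $\psi$. Let $w \in F(\cC)_n$. By Lemma~\ref{lem:characterization-equivalence}(ii) we may write $w = \pi(w(p,l))$ for some partition $p \in \cC_{sl}$ (WLOG with no upper points) and a labelling $l = (a_{j_1},\ldots,a_{j_m})$ with distinct $j_r \in \{1,\ldots,n\}$. Apply Proposition~\ref{PropWordPartialIsom}(ii) in $A_\cC(n)$ with the choice $a_r := u_{j_r j_r}$: the equation reads $u_{j_{b(1)} j_{b(1)}} \cdots u_{j_{b(k)} j_{b(k)}} = q$, where $b(r)$ is the block of the $r$-th point of $p$ and $q = \prod_r u_{j_r j_r}^2$. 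Passing to $B$, we have $q = 1$, so the word in the $\bar u_{j_r j_r}$ representing $w$ is trivial; this shows $\psi(w) = 1$.

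The main step is injectivity, which I would establish by exhibiting a one-sided inverse on the level of C*-algebras. Let $v_1, \ldots, v_n$ denote the canonical symmetries generating $\Cstarmax(\ZZ_2^{*n}/F(\cC)_n)$ and set $w_{ij} := \delta_{ij} v_i$. I claim the assignment $u_{ij} \mapsto w_{ij}$ extends to a *-homomorphism $\phi: A_\cC(n) \to \Cstarmax(\ZZ_2^{*n}/F(\cC)_n)$. The hyperoctahedral relations are immediate: $w_{ij}^* = w_{ij}$, $w_{ij}^2$ is a projection, $w_{ij}w_{ik} = 0$ for $j \neq k$, and $\sum_k w_{ik}^2 = v_i^2 = 1$. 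By Proposition~\ref{PropCSL}(a) the remaining relations are generated by the pair positioner $\primarypart$ (which gives $w_{ij}w_{kl}^2 = w_{kl}^2 w_{ij}$, trivially true since $w_{kl}^2 \in \{0,1\}$) and by partitions in $\cC_{sl}$. For $p \in \cC_{sl}$ with no upper points, the defining intertwiner condition is
\[
  \sum_{j_1,\ldots,j_k} \delta_p(j)\, w_{i_1 j_1}\cdots w_{i_k j_k} = \delta_p(i)
  \quad\text{for every multi-index } i.
\]
Only the term $j = i$ survives in the sum, reducing this to $\delta_p(i)(v_{i_1}\cdots v_{i_k} - 1) = 0$. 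When $\delta_p(i) = 1$ the indices $i$ factor through the block map of $p$, so $v_{i_1}\cdots v_{i_k} = \pi(w(p,l'))$ for a suitable labelling $l'$ of $p$; this element lies in $F(\cC)_n$ (first using a labelling by distinct letters, then applying a letter-identification endomorphism, both preserving $F(\cC)$ by Lemma~\ref{lem:F-defines-invariant-group}), hence is trivial in $\ZZ_2^{*n}/F(\cC)_n$. Thus $\phi$ is well defined.

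To conclude, $\phi$ annihilates the off-diagonal generators and factors through $\bar\phi: B \to \Cstarmax(\ZZ_2^{*n}/F(\cC)_n)$ with $\bar\phi(\bar u_{ii}) = v_i$. The composition $\bar\phi \circ \psi$ sends each generator $a_i$ of $\ZZ_2^{*n}/F(\cC)_n$ to its canonical image in $\Cstarmax(\ZZ_2^{*n}/F(\cC)_n)$, which is injective because every discrete group embeds into its full group C*-algebra. Hence $\psi$ is injective, and therefore an isomorphism. The principal obstacle is the well-definedness of $\phi$: one has to translate each partition relation in $\cC$ into a word relation in $\ZZ_2^{*n}/F(\cC)_n$, which hinges precisely on the combinatorial content of Proposition~\ref{PropWordPartialIsom} together with the $S_0$-invariance of $F(\cC)$.
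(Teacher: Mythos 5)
Your proposal is correct and follows essentially the same route as the paper: both identify the diagonal quotient as the group on $n$ generators of order two subject exactly to the word relations $\pi(u_{i(1)i(1)})\dotsm\pi(u_{i(k)i(k)})=1$ coming from partitions $p\in\cC$ with $\delta_p(i)=1$, which by definition cut out $F(\cC)_n$. The only difference is one of explicitness: where the paper asserts via a commuting diagram over $A_o(n)$ that the diagonal subgroup is the \emph{universal} group with these relations, you substantiate that universality by exhibiting the representation $u_{ij}\mapsto\delta_{ij}v_i$ into $\Cstarmax(\ZZ_2^{*n}/F(\cC)_n)$ and checking the intertwiner relations on the generating set $\cC_{sl}\cup\{\primarypart\}$ — a worthwhile elaboration, but the same argument in substance.
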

\begin{proof}
Let $H$ be the diagonal subgroup of $A_\cC(n)$.  The \Cstar-algebra $A_\cC(n)$ is the quotient of $A_o(n)$ by the relations $T_p \ot 1 = (u_n^o)^{\ot k}(T_p \ot 1)$ for all $p \in \cC$.  Moreover, the diagonal subgroup of the free orthogonal quantum group satisfies $\diag(A_o(n), u_n^o) = \ZZ_2^{*n}$.  Denote by $\pi$ the quotient homomorphism $A_\cC(n) \ra A_\cC(n)/\langle u_{ij} \amid i \neq j \rangle$. Then there is a commuting diagram
\[
\begin{xy}
  \xymatrix{
    (A_o(n), u_n^o) \ar[d] \ar[r] & \Cstarmax(\ZZ_2^{*n}) \ar[d]\\
    (A_{\cC}(n), u_n) \ar[r]^\pi     & \Cstarmax(H) \eqstop
  }
\end{xy}
\]
So $H$ is the universal group generated by elements $a_i = \pi(u_{ii})$, $1 \leq i \leq n$ of order two that satisfy $T_p \ot 1 = (\pi(u_n))^{\ot k}(T_p \ot 1)$ for all $p \in \cC$.  Moreover, it suffices to consider partitions $p \in \cC$ on one row.  Let $p \in \cC$ be such a partition of length $k$.  Then $T_p \ot 1 = u^{\ot k}(T_p \ot 1)$.  Hence, we have equality of
\[
  (T_p \ot 1)(1 \ot 1)
  =
  \sum_{i(1), \dotsc, i(k) = 1}^n
  {
    \delta_p(i) \cdot e_{i(1)} \ot \dotsm \ot e_{i(k)} \ot 1
  }
\]
and
\[
  u^{\ot k}(T_p \ot 1)(1 \ot 1)
  =
  \sum_{\substack{i(1), \dotsc, i(k), \\  j(1), \dotsc, j(k) = 1}}^n
  {
    \delta_p(j) \cdot e_{i(1)} \ot \dotsm \ot e_{i(k)} \ot u_{i(1)j(1)} \dotsm \, u_{i(k)j(k)}
  }
  \eqstop
\]
Applying the projection $\pi$ to both equations, we obtain that for all $i(1), \dotsc, i(k)$ with $\delta_p(i) = 1$ we have $\pi(u_{i(1)i(1)}) \dotsm \, \pi(u_{i(k)i(k)}) = 1$.  This shows that $H \cong \ZZ^{*n}/F(\cC)_n$.
\end{proof}

Note that as a consequence of the last theorem the \Cstar-algebra $A_\cC(n)$ is a canonical extension of the full group \Cstar-algebra $\Cstarmax(\ZZ_2^{* n}/F(\cC)_n)$.  

\begin{lemma}
\label{lem:inductive-limits}
  Let $H$ be a subgroup of $\ZZ_2^{*\infty}$.  Then $\ZZ_2^{*\infty}/H \cong \varinjlim (\ZZ_2^{*n}/(H)_n, \phi_n)$, where $\phi_n$ is defined by the diagram
\begin{center}
\setlength{\unitlength}{0.5cm}
\begin{picture}(10,3)
  \put(0,0){$\ZZ_2^{*n}$}
  \put(5,0){$\stackrel{a_i \mapsto a_i}{\lra}$}
  \put(10,0){$\ZZ_2^{*n+1}$}

  \put(0.5,1.8){\begin{rotate}{270} $\subset$ \end{rotate}}
  \put(10.5,1.8){\begin{rotate}{270} $\subset$ \end{rotate}}

  \put(0,2.5){$(H)_n$}
  \put(5,2.5){$\stackrel{\phi_n}{\lra}$}
  \put(10,2.5){$(H)_{n + 1}$}
 
  \put(12,0){.}
\end{picture}
\end{center}
\end{lemma}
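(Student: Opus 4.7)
The plan is to show the isomorphism via the universal property of the inductive limit. First I would verify that $\phi_n$ is well-defined: the inclusion $\ZZ_2^{*n} \hookrightarrow \ZZ_2^{*n+1}$ sending $a_i \mapsto a_i$ clearly maps $(H)_n$ into $(H)_{n+1}$, since a word involving only $a_1, \dotsc, a_n$ in particular involves only $a_1, \dotsc, a_{n+1}$. So $\phi_n : \ZZ_2^{*n}/(H)_n \to \ZZ_2^{*n+1}/(H)_{n+1}$ is induced on the quotient. This gives an inductive system.

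Next I would construct the comparison map. For each $n$, the inclusion $\iota_n : \ZZ_2^{*n} \hookrightarrow \ZZ_2^{*\infty}$ descends to a well-defined group homomorphism $\psi_n : \ZZ_2^{*n}/(H)_n \to \ZZ_2^{*\infty}/H$, because $(H)_n \subset H$ (reading $(H)_n$ as a subset of $\ZZ_2^{*\infty}$ via $\iota_n$). These maps are plainly compatible with the $\phi_n$, so by the universal property they factor through a unique group homomorphism
\[
  \psi : \varinjlim\nolimits_n (\ZZ_2^{*n}/(H)_n, \phi_n) \;\longrightarrow\; \ZZ_2^{*\infty}/H \eqstop
\]

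Finally I would check that $\psi$ is bijective. For surjectivity, take any $g \in \ZZ_2^{*\infty}/H$ and lift it to a word $w \in \ZZ_2^{*\infty}$. Since $w$ involves only finitely many letters, there is some $n$ with $w \in \ZZ_2^{*n}$, and the class $[w] \in \ZZ_2^{*n}/(H)_n$ gives a preimage in the inductive limit. For injectivity, suppose an element of the inductive limit maps to the identity. Choose a representative $[w] \in \ZZ_2^{*n}/(H)_n$ with $\iota_n(w) \in H$. Since $w$ only involves the letters $a_1, \dotsc, a_n$, we have $\iota_n(w) \in (H)_n$ by definition of $(H)_n$, so $[w] = e$ already in $\ZZ_2^{*n}/(H)_n$, hence in the inductive limit.

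There is no real obstacle here; the statement is the standard fact that forming a quotient commutes with the filtered colimit $\ZZ_2^{*\infty} = \varinjlim \ZZ_2^{*n}$, once one observes that the filtration $(H)_n$ of $H$ is compatible with the filtration $\ZZ_2^{*n}$ of $\ZZ_2^{*\infty}$. The only point requiring minor care is that, if $H$ (resp. $(H)_n$) is not already normal, one should read $\ZZ_2^{*\infty}/H$ and $\ZZ_2^{*n}/(H)_n$ as quotients by the respective normal closures; the argument above is unaffected, since in each step we only use that generators mapping into $H$ lie in the relevant normal subgroup.
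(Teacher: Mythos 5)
Your proof is correct and follows essentially the same route as the paper: both arguments reduce the statement to the universal property of the inductive limit together with the key observation that $H = \bigcup_{n \geq 1} (H)_n$ (which is exactly what your injectivity step uses). The only cosmetic difference is that the paper verifies the universal property directly (any compatible family into $K$ induces a map on $\ZZ_2^{*\infty}$ killing $H$), whereas you build the canonical comparison map and check bijectivity; your closing remark on normal closures is a sensible precaution, though in the paper's applications $H$ is $S_0$-invariant and hence already normal.
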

\begin{proof}
  By universality of inductive limits, we have to show that for any compatible family of morphisms 
   \[
   \xymatrix{
     \ZZ_2^{*n}/(H)_n \ar[r]\ar[rrrd] &  \ZZ_2^{*n + 1}/(H)_{n+1} \ar[r]\ar[rrd] & \ZZ_2^{*n + 2}/(H)_{n+2} \ar[r]\ar[rd] & \dotsm \\
     &&& K
   }
   \]
the induced map $\pi: \ZZ_2^{*\infty} \ra K$ contains $H$ in its kernel.  This follows from the fact that $H = \bigcup_{n \geq 1} (H)_n$.
\end{proof}

The following corollary says that for a simplifiable hyperoctahedral category of partitions, we can recover $F(\cC)$ directly from the diagonal subgroups of the family $(A_{\cC}(n),u_n)_{n \geq 2}$.

\begin{corollary}
  Let $\cC$ be a simplifiable hyperoctahedral category of partitions.  Then $F(\cC) = \ker (\ZZ_2^{*\infty} \ra \varinjlim (\diag(A_{\cC}(n), u_n)))$.
\end{corollary}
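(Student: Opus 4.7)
The plan is to combine Theorem \ref{thm:F-and-diagonal-subgroup} with Lemma \ref{lem:inductive-limits} applied to $H = F(\cC)$, and then identify the kernel of the resulting quotient map.

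First I would verify that the family of isomorphisms $\diag(A_\cC(n), u_n) \cong \ZZ_2^{*n}/F(\cC)_n$ from Theorem \ref{thm:F-and-diagonal-subgroup} assembles into an isomorphism of inductive systems. On the diagonal-subgroup side, the natural connecting morphism $\diag(A_\cC(n), u_n) \to \diag(A_\cC(n+1), u_{n+1})$ sends the image of $u_{ii}^{(n)}$ to the image of $u_{ii}^{(n+1)}$; under the identification from Theorem \ref{thm:F-and-diagonal-subgroup}, which sends the image of $u_{ii}^{(n)}$ to the class of $a_i$, this corresponds precisely to the map $\phi_n$ of Lemma \ref{lem:inductive-limits}, namely $[a_i] \mapsto [a_i]$. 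Hence the two inductive systems are canonically isomorphic.

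Once this naturality is in hand, passing to the inductive limit and applying Lemma \ref{lem:inductive-limits} with $H = F(\cC)$ yields
\[
  \varinjlim \diag(A_\cC(n), u_n) \;\cong\; \varinjlim \ZZ_2^{*n}/F(\cC)_n \;\cong\; \ZZ_2^{*\infty}/F(\cC).
\]
The canonical homomorphism $\ZZ_2^{*\infty} \ra \varinjlim \diag(A_\cC(n), u_n)$, sending each generator $a_i$ to its image in $\diag(A_\cC(n), u_n)$ for any $n \geq i$, is then identified along this chain with the quotient map $\ZZ_2^{*\infty} \ra \ZZ_2^{*\infty}/F(\cC)$. Its kernel is tautologically $F(\cC)$.

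The only genuinely nontrivial point is the naturality check in the first paragraph, which amounts to unwinding the identifications made in the proof of Theorem \ref{thm:F-and-diagonal-subgroup}; I do not foresee a serious obstacle, as the corollary is essentially a formal consequence of the preceding theorem and lemma.
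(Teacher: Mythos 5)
Your proposal is correct and is precisely the argument the paper intends: the paper's proof consists of the single sentence that the corollary follows from Theorem \ref{thm:F-and-diagonal-subgroup} and Lemma \ref{lem:inductive-limits}, and your write-up simply makes explicit the naturality of the isomorphisms and the identification of the kernel. No gap here.
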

\begin{proof}
  This follows from Theorem \ref{thm:F-and-diagonal-subgroup} and Lemma \ref{lem:inductive-limits}.
\end{proof}

We can also recover $(A_\cC(n), u_n)$ directly from the group $F(\cC)$ without passing through the framework of partitions.  This is done by considering quantum isometry groups.  By \cite{banicacurranspeicher09_2} the category $\Katprim$ gives rise to an easy quantum group, denoted by $H_n^{[\infty]}$.  It is the maximal simplifiable hyperoctahedral quantum group.

\begin{theorem}
\label{thm:qiso-groups}
  Let $H \leq E \leq \ZZ_2^{*\infty}$ be a proper $S_0$-invariant subgroup of $E$.  Then the maximal quantum subgroup of $(\cC(H_n^{[\infty]}), u_{\mathrm{simpl}})$ acting faithfully and isometrically on $\Cstarmax(\ZZ^{*n}_2/(H)_n)$ is $(A_{\cC_H}(n), u_n)$.
\end{theorem}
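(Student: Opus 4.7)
The strategy is two-fold. First, I would verify that $(A_{\cC_H}(n), u_n)$ itself acts faithfully and isometrically on $\Cstarmax(\ZZ_2^{*n}/(H)_n)$ via $\alpha(u_s) := \sum_t u_t \ot u_{ts}$; then, I would show that any compact matrix quantum subgroup $(B, v)$ of $(\cC(H_n^{[\infty]}), u_{\mathrm{simpl}})$ carrying such an action factors through $A_{\cC_H}(n)$.

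For the existence step, the relation $\alpha(u_s)^2 = 1$ is automatic from being a quantum subgroup of $H_n^+$: the cross terms vanish because $u_{t_1 s} u_{t_2 s} = 0$ for $t_1 \neq t_2$ and the diagonal terms sum to $1 \ot \sum_t u_{ts}^2 = 1 \ot 1$. For each defining relation $w = a_{i_1} \dotsm a_{i_k} \in (H)_n$ (in reduced form), I would associate the single leg partition $p(w) \in \cC_H^{sl}$ whose blocks identify equal letters and expand
\[
  \alpha(w)
  =
  \sum_{(t_1, \dotsc, t_k) \in \{1, \dotsc, n\}^k}
    u_{t_1} \dotsm u_{t_k} \ot u_{t_1 i_1} \dotsm u_{t_k i_k}
  \eqstop
\]
I would split the sum according to whether $(t)$ respects the block pattern of $p(w)$. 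The non-pattern tuples vanish in the right tensor factor by a commuting-squares argument: since $\primarypart \in \cC_H$, the projection $u_{ab}^2$ is central and can be slid onto a factor $u_{cb}$ in a different row, producing $u_{ab} \cdot (u_{ab} u_{cb}) = 0$ on one side while acting as identity on the other. For pattern-respecting tuples, Proposition \ref{PropWordPartialIsom} (applied to $p(w) \in \cC_H$) replaces $u_{t_1 i_1} \dotsm u_{t_k i_k}$ by the product of the distinct squared generators, while $u_{t_1} \dotsm u_{t_k}$ equals $1$ in $\Cstarmax(\ZZ_2^{*n}/(H)_n)$ because it is the image of $w$ under an identification-of-letters endomorphism from $S_0$, which preserves $H$. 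Summing the squares and applying $\sum_t u_{tj}^2 = 1$ coordinatewise collapses everything to $1 \ot 1$. Isometry and the coaction property are then immediate from the form of $\alpha$.

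For the maximality step, Tannaka--Krein duality reduces the claim to showing $\cC_H \subset \cC_B$, where $\cC_B$ is the category of partitions of $(B, v)$. Since $B \subset H_n^{[\infty]}$ already ensures $\primarypart \in \cC_B$, and since $\cC_H = \langle \cC_H^{sl}, \primarypart \rangle$ by Proposition \ref{PropCSL}, it suffices to place each $p \in \cC_H^{sl}(0, k)$ into $\cC_B$ (partitions on both rows are then handled by rotation). Fix such $p$ with $m$ blocks and block function $\pi$. For every $c \in \{1, \dotsc, n\}^m$, the $S_0$-invariance of $H$ under identification of letters ensures that the word $w_c = a_{c_{\pi(1)}} \dotsm a_{c_{\pi(k)}}$ lies in $(H)_n$, so $\beta(w_c) = 1 \ot 1$. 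Repeating the pattern/non-pattern decomposition inside $B$ (the non-pattern products vanish by the same commuting-squares argument, which uses only $\primarypart \in \cC_B$), I would extract
\[
  \sum_{d \in \{1, \dotsc, n\}^m}
    v_{d_{\pi(1)} c_{\pi(1)}} \dotsm v_{d_{\pi(k)} c_{\pi(k)}}
  = 1
  \eqstop
\]
Translating back through $\delta_p$, this is precisely $\sum_{j \colon \delta_p(j) = 1} v_{i_1 j_1} \dotsm v_{i_k j_k} = 1$ for every $i$ with $\delta_p(i) = 1$. The complementary case $\delta_p(i) = 0$ forces the same sum to be $0$, again by the commuting-squares vanishing applied to two positions in the same block of $p$ with distinct $i$-values but a common $j$-value. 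Together, these give $T_p \in \Hom(1, v^{\ot k})$, hence $p \in \cC_B$.

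The main technical delicacy is the commuting-squares vanishing argument, which runs in both $A_{\cC_H}(n)$ and $B$: to force a product $u_{s_1 i_1} \dotsm u_{s_k i_k}$ (or its $v$-analogue) to zero when two positions share a column but differ in the row, one slides a central projection across to a neighbouring factor, uses column orthogonality $u_{ab} u_{cb} = 0$ on one side of the identity, and uses that the same projection leaves the original product invariant on the other side. This trick is precisely what couples $\primarypart$ (centrality of $u_{ij}^2$) with $\vierpart$ (column orthogonality), and it is the workhorse of both directions.
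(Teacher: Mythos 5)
Your proposal follows essentially the same route as the paper's proof: both directions rest on the centrality of the projections $u_{ij}^2$ combined with row/column orthogonality (your ``commuting-squares'' vanishing is exactly the paper's claim that a nonzero product $u_{i(1)j(1)} \dotsm u_{i(k)j(k)}$ forces $j$ to arise from $i$ by a permutation of letters), on the $S_0$-invariance of $H$ under identification of letters, and on Proposition \ref{PropWordPartialIsom}. Only two points deserve more care than you give them: the isometry of the action of $(A_{\cC_H}(n), u_n)$ is not ``immediate'' but requires that same permutation-of-letters claim to prove $\alpha(L_k) \subset L_k \ot A_{\cC_H}(n)$, and the convention $\alpha(u_s) = \sum_t u_t \ot u_{ts}$ yields the intertwiner relations for the transpose of the fundamental corepresentation, which must be reconciled with the stated conclusion via Corollary \ref{cor:coopposite}.
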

\begin{proof}
  Denote by $\pi: \ZZ_2^{*n} \ra \ZZ^{*n}_2/(H)_n$ the canonical quotient map and by $a_1, \dotsc, a_n$ the canonical generators of $\ZZ_2^{*n}$.  Since $H_n^{[\infty]}$ is isomorphic to its coopposite quantum group by Corollary \ref{cor:coopposite}, we may consider quantum subgroups of $(H_n^{[\infty]})^{\mathrm{coop}}$ instead of $H_n^{[\infty]}$ in the following.  Let $(A, u)$ be a compact quantum subgroup of $H_n^{[\infty]}$ such that $(A, u^t)$ acts faithfully by
\[
  \alpha:
  \Cstarmax(\ZZ_2^{*n}/(H)_n) \ra \Cstarmax(\ZZ_2^{*n}/(H)_n) \ot A:
  \pi(a_i) \ra \sum_j \pi(a_j) \ot u_{ij}
\]
on $\Cstarmax(\ZZ_2^{*n}/(H)_n)$ and preserves the length function $l$ associated with the generators $\pi(a_1), \dotsc, \pi(a_n)$.  We show that $(A,u)$ is a quantum subgroup of $(A_{\cC_H}(n), u_n)$.

Since $(A,u)$ is a quotient of $(C(H_n^{[\infty]}), u_{\mathrm{simpl}})$, Proposition \ref{GestaltAprimary} shows that the entries $u_{ij}$ of $u$ are self-adjoint partial isometries, which are pairwise orthogonal in every row and in every column and whose support projections are central.  We check the additional relations that are imposed on the $u_{ij}$ by the fact that $(A, u^t)$ acts isometrically on $\Cstarmax(\ZZ_2^{*n}/(H)_n)$.  For every word $a_{i(1)} \dotsm \, a_{i(k)}  \in \ZZ_2^{*n}$ we have
\begin{align*}
  \alpha(\pi(a_{i(1)} \dotsm \, a_{i(k)}))
  & =
  \sum_{j(1), \dotsc, j(k) = 1}^n
  {
    \pi(a_{j(1)} \dotsm \, a_{j(k)}) \ot u_{i(1)j(1)} \dotsm \, u_{i(k)j(k)}
  } 
  \eqstop
\end{align*}
This expression has non-zero coefficients $u_{i(1)j(1)} \dotsm \, u_{i(k)j(k)}$ only for those $(j(1), \dotsc, j(k))$ with $l(\pi(a_{j(1)} \dotsm \, a_{j(k)})) = l(\pi(a_{i(1)} \dotsm \, a_{i(k)}))$.  Since $l(\pi(a_{i(1)} \dotsm \, a_{i(k)})) = 0$ if and only if ${a_{i(1)} \dotsm \, a_{i(k)} \in (H)_n}$, this means in particular that for all $a_{i(1)} \dotsm \, a_{i(k)} \notin (H)_n$  we have
\[
  \sum_{a_{j(1)} \dotsm \, a_{j(k)} \in (H)_n}
  {
    u_{i(1)j(1)} \dotsm \, u_{i(k)j(k)}
  }
  =
  0
  \eqstop
\]
On the other hand, if $a_{i(1)} \dotsm \, a_{i(k)} \in (H)_n$ then
\[
  \alpha(\pi(a_{i(1)} \dotsm \, a_{i(k)}))
  =
  \alpha(1)
  =
  1 \ot 1
  \eqstop
\]
It follows that
\[
  \sum_{a_{j(1)} \dotsm \, a_{j(k)} \in (H)_n}
  {
    u_{i(1)j(1)} \dotsm \, u_{i(k)j(k)}
  }
  =
  1
  \eqstop
\]
We proved that for all $a_{i(1)} \dotsm \, a_{i(k)} \in \ZZ_2^{*n}$ we have
\[
\sum_{a_{j(1)} \dotsm \, a_{j(k)} \in (H)_n}
  {
    u_{i(1)j(1)} \dotsm \, u_{i(k)j(k)}
  }
  =
  \delta_p(i) \cdot 1
  \eqcomma
\]
where $p$ denotes the partition in $\cC_H$ that is associated with $a_{i(1)} \dotsm \, a_{i(k)}$.

Next, we claim that if $u_{i(1)j(1)} \dotsm \, u_{i(k)j(k)} \neq 0$, then the word $a_{j(1)} \dotsm \, a_{j(k)}$ arises from $a_{i(1)} \dotsm \, a_{i(k)}$ by a permutation of letters.
Recall that all $u_{ij}$ are self-adjoint partial isometries, which are pairwise orthogonal in every row and every column and whose supports are central in $A_{\cC_H}(n)$.  If $1 \leq \alpha, \beta \leq k$, we can conclude from $i(\alpha) = i(\beta)$ and $u_{i(1)j(1)} \dotsm \, u_{i(k)j(k)} \neq 0$, that $j(\alpha) = j(\beta)$.  Similarly, we conclude from $j(\alpha) = j(\beta)$, that $i(\alpha) = i(\beta)$.  This proves our claim.

Take now a partition $p \in \cC$.  By our claim, for any multi-index $i = (i(1), \dotsc, i(k))$, which satisfies $\delta_p(i) = 0$, we have
\[
  \sum_{j(1), \dotsc, j(k) =1}^n
  {
    \delta_p(j) \cdot u_{i(1)j(1)} \dotsm \, u_{i(k)j(k)}
  }
  =
  0
\]
If $i =  (i(1), \dotsc, i(k))$ satisfies $\delta_p(i) = 1$, we obtain, using the claim again,
\[
  \sum_{j(1), \dotsc, j(k) =1}^n
  {
    \delta_p(j) \cdot u_{i(1)j(1)} \dotsm \, u_{i(k)j(k)}
  }
  =
  \sum_{a_{j(1)} \dotsm \, a_{j(k)} \in (H)_n}
  {
    u_{i(1)j(1)} \dotsm \, u_{i(k)j(k)}
  }
  =
  1
  \eqstop
\]
Summarising, we have
\[
  \sum_{j(1), \dotsc, j(k) =1}^n
  {
    \delta_p(j) \cdot u_{i(1)j(1)} \dotsm \, u_{i(k)j(k)}
  }
  =
  \delta_p(i)
  \eqstop
\]
We infer that
\begin{align*}
  u^{\ot k}(T_p \ot 1)(1 \ot 1)
  & =
  \sum_{\substack{i(1), \dotsc, i(k), \\ j(1), \dotsc, j(k) = 1}}^n
  {
    e_{i(1)} \ot \dotsm \ot e_{i(k)} \ot \delta_p(j) \cdot u_{i(1)j(1)} \dotsm \, u_{i(k)j(k)}
  } \\
  & =
  \sum_{i(1), \dotsc, i(k) = 1}^n
  {
    e_{i(1)} \ot \dotsm \ot e_{i(k)} \ot
    \left( \sum_{j(1), \dots, j(k) = 1}^n
    {
      \delta_p(j) \cdot u_{i(1)j(1)} \dotsm \, u_{i(k)j(k)}
    }
    \right )    
  } \\
  & =
  \sum_{i(1), \dotsc, i(k) = 1}^n
  {
    e_{i(1)} \ot \dotsm \ot e_{i(k)} \ot \delta_p(i) \cdot 1
  } \\
  & =
  (T_p \ot 1)(1 \ot 1)
  \eqstop
\end{align*}
We proved that $(A, u)$ is a quantum subgroup of $(A_{\cC_H}(n), u_n)$.

Next we show that $(A_{\cC_H}(n), (u_n)^t)$ acts faithfully and isometrically on $\Cstarmax(\ZZ_2^{*n}/(H)_n)$.  The map
\[
  \alpha:
  \Cstarmax(\ZZ_2^{*n}/(H)_n)
  \ra
  \Cstarmax(\ZZ_2^{*n}/(H)_n) \ot A_{\cC_H}(n):
  \pi(a_i)
  \mapsto
  \sum_j \pi(a_j) \ot u_{ij}
\]
is a well defined action of $(A_{\cC_H}(n), (u_n)^t)$ on $\Cstarmax(\ZZ_2^n/(H)_n)$, by the calculations in the first part of the proof.  By definition, it is faithful, so it remains to show that it is isometric.  We say that a word $a_{i(1)} \dotsm \, a_{i(k)}$ is reduced in $\ZZ_2^{*n}/(H)_n$, if there is no $k' < k$ and $j(1), \dotsc, j(k')$ such that $\pi(a_{i(1)} \dotsm \, a_{i(k)}) = \pi(a_{j(1)} \dotsm \, a_{j(k')})$.  A word $a_{i(1)} \dotsm \, a_{i(k)}$ is reduced in $\ZZ_2^{*n}/(H)_n$ if and only if $l(\pi(a_{i(1)} \dotsm \, a_{i(k)})) = k$.  Denoting
\[
  L_k
  =
  \lspan \{
    w = \pi(a_{i(1)} \dotsm \, a_{i(k)})
    \amid
    a_{i(1)} \dotsm \, a_{i(k)} \text{ is reduced as a word in } \ZZ_2^{*n}/(H)_n
  \}
  \eqcomma
\]
we have to show that $\alpha(L_k) \subset L_k \ot A_{\cC_H}(n)$.  First note that $\alpha(L_k) \subset \bigcup_{k' \leq k} L_{k'} \ot A_{\cC_H}(n)$.  Let $\pi(a_{i(1)}) \dotsm \, \pi(a_{i(k)})$ denote a word that is reduced in $\ZZ_2^{*n}/(H)_n$, write $w = \pi(a_{i(1)} \dotsm \, a_{i(k)})$ and assume that $\alpha(w) \notin L_k \ot A_{\cC_H}(n)$.  We have
\[
  \alpha(w)
  = 
  \sum_{j(1), \dotsc, j(k)} \pi(a_{j(1)} \dotsm \, a_{j(k)}) \ot u_{i(1)j(1)} \dotsm \, u_{i(k)j(k)}
  \eqcomma
\]
so there is some mutiindex $(j(1), \dotsc, j(k))$ such that
\begin{itemize}
\item $a_{j(1)} \dotsm \, a_{j(k)}$ is not reduced in $\ZZ_2^{*n}/(H)_n$ and
\item $u_{i(1)j(1)} \dotsm \, u_{i(k)j(k)} \neq 0$.
\end{itemize}

As seen above, the word $a_{i(1)} \dotsm \, a_{i(k)}$ must arise as a permutation of letters from $a_{j(1)} \dotsm \, a_{j(k)}$, because $u_{i(1)j(1)} \dotsm \, u_{i(k)j(k)} \neq 0$.  Since $a_{j(1)} \dotsm \, a_{j(k)}$ is not reduced in $\ZZ_2^{*n}/(H)_n$ and $a_{i(1)} \dotsm \, a_{i(k)}$ arises from $a_{j(1)} \dotsm \, a_{j(k)}$ by a permutation of letters, also $a_{i(1)} \dotsm \, a_{i(k)}$ is not reduced in $\ZZ_2^{*n}/(H)_n$.  This is a contradiction.  We proved that $\alpha$ is an isometric action of $(A_{\cC_H}(n), (u_n)^t)$.  Summarising we showed that the maximal quantum subgroup of $(\rC(H_n^{[\infty]}), u_{\mathrm{simpl}})$ which acts faithfully and isometrically on $\Cstarmax(\ZZ_2^{*n}/(H)_n)$ is isomorphic to $(A_{\cC_H}(n), (u_n)^t)$.  Invoking Corollary \ref{cor:coopposite}, we see that $(A_{\cC_H}(n), (u_n)^t) \cong (A_{\cC_H}(n), u_n)$ and this finishes the proof.
\end{proof}

In view of the last theorem, it would be interesting to calculate the full quantum isometry groups of $\Cstarmax(\ZZ^{* n}/(H)_n)$.

\begin{example}
  One class of groups which appear as $\ZZ_2^{*n}/(H)_n$ for some $S_0$-invariant subgroup $H \leq E \leq \ZZ_2^{* \infty}$ are Coxeter groups.  A Coxeter group $G$ is of the above form if and only if
\[
  G =  \ZZ_2^{* n}/\langle (a_ia_j)^s \amid 1 \leq i,j \leq n \rangle \eqcomma
\]
for some $s \in \NN_{\geq 2}$.  The easy quantum group associated with it is $H_n^{[s]}$, since the category of partitions of the latter is given by $\langle \vierpart, h_s \rangle$. 
\end{example}

\subsection{The triangluar relationship between quantum groups, reflection groups and categories of partitions}
\label{sec:triangular-relationship}

Let us give a name to the groups that appear in the first part of this section.

\begin{definition}
  Let $H \leq \ZZ_2^{* \infty}$ be an $S_0$-invariant subgroup.  A \emph{symmetric reflection group} $G$ is the quotient of $\ZZ_2^{* n}$ by the intersection $H \cap \ZZ_2^{* n}$.  The images of the canonical generators of $\ZZ_2^{* n}$ in $G$ are called the generators of the symmetric reflection group $G$.
\end{definition}

We obtain a correspondence between simplifiable hyperoctahedral quantum groups and symmetric reflection groups.  By the work of Banica and Speicher \cite{banicaspeicher09},  there is a correspondence between easy quantum groups and categories of partitions.  Finally, the results of Section \ref{sec:group-theoretic-framework} show that there is a correspondence between symmetric reflection groups and simplifiable hyperoctahedral categories of partitions.  We therefore obtain a triangular correspondence between simplifiable hyperoctahedral  quantum groups, symmetric reflection groups and simplifiable hyperoctahedral categories of partitions that we are going to recap.

\hspace{1.5cm}
\begin{xy}
\xymatrix{
  \begin{minipage}{4cm}
    \begin{center}
      simplifiable hyperoctahedral categories of partitions
    \end{center}
  \end{minipage}
  \ar[dr]
  \ar[rr]
  & &
  \begin{minipage}{4cm}
    \begin{center}
      simplifiable hyperoctahedral quantum groups
    \end{center}
  \end{minipage}
  \ar[ll]
  \ar[dl]
  \\ &
  \begin{minipage}{4cm}
    \begin{center}
      symmetric reflection groups
    \end{center}
  \end{minipage}
  \ar[ur]
  \ar[ul]
}
\end{xy}

\textbf{From quantum groups to categories of partitions and back:} By the work of Banica and Speicher \cite{banicaspeicher09} there is a one-to-one correspondence between easy quantum groups and categories of partitions.  As described in Section \ref{sec:categories-of-partitions}, the category of partitions associated with an easy quantum group $(A, u)$ describes the intertwiner spaces between tensor powers of the fundamental corepresentation $u$.  By definition, an easy quantum groups is called a simplifiable hyperoctahedral quantum group, if the category of partitions associated with it contains the four block $\vierpart$, the pair positioner partition $\primarypart$, but not the double singleton $\singleton \ot \singleton$. 

\textbf{From categories of partitions to reflection groups and back:} Theorem \ref{thm:F-is-1-1} shows that there is a one-to-one correspondence between categories of partitions and $S_0$-invariant subgroups of $\ZZ_2^{*\infty}$.  By definition, symmetric reflection groups on infinitely many generators correspond precisely to the $S_0$-invariant subgroups of $\ZZ_2^{* \infty}$.

\textbf{From quantum groups to reflection groups and back:}  Let $(A, u)$, $u \in \Cmat{n}(A)$ be a compact matrix quantum group.  The quotient of $A$ by the ideal generated by $\{u_{ij} \amid i \neq j \}$ is a cocommutative compact matrix quantum group.  It is of the form $(\Cstar(G), (\delta_{ij} g_i)_{ij})$ for a generating set $g_1, \dotsc, g_n$ of a discrete group $G$.  Theorem \ref{thm:F-and-diagonal-subgroup} shows that if $(\rC(H), u)$ is a simplifiable hyperoctahedral quantum group, then the associated discrete group $G$ with generators $g_1, \dotsc, g_n$ is a symmetric reflection group.  Vice versa, Theorem \ref{thm:qiso-groups} associates with a symmetric reflection group $G$ on finitely many generators $g_1, \dotsc, g_n$ the maximal quantum subgroup of $H \subset H_n^{[\infty]}$ that acts faithfully and isometrically on $\Cstarmax(G)$.  The remark after Theorem \ref{thm:F-and-diagonal-subgroup} says that $\rC(H)$ is a canonical extension of $\Cstarmax(G)$ as a \Cstar-algebra.

\textbf{From finitely generated to infinitely generated symmetric reflection groups and back:}  Given a symmetric reflection group $G$ on infinitely many generators $g_1, g_2, \dotsc$ all groups $G_n = \langle g_1, g_2, \dotsc g_n \rangle$ are symmetric reflection groups also.  They form an inductive system
\[
  \dotsm \hra G_n \hra G_{n+1} \hra \dotsm
\]
of symmetric reflection groups.  We can obtain $G$ as the inductive limit of this system.

Theorems \ref{thm:F-and-diagonal-subgroup} and \ref{thm:qiso-groups} show that the above correspondences are compatible with each other.  Put differently, the triangle between quantum groups, discrete groups and categories of partitions commutes.




\begin{thebibliography}{BBCC12}\setlength{\itemsep}{-1mm}\setlength{\parsep}{0mm}\small

\bibitem[Bae37]{bear37}
R. Baer.
\newblock {Abelian groups without elements of finite order.}
\newblock {\em Duke Math. J.} \textbf{3}, 68--122, 1937.

\bibitem[Ban05a]{banica05-homogeneous-graphs}
T. Banica.
\newblock {Quantum automorphism groups of homogeneous graphs.}
\newblock {\em J. Funct. Anal.} \textbf{224} (2), 243--280, 2005.

\bibitem[Ban05b]{banica05-small-metric-spaces}
T. Banica.
\newblock {Quantum automorphism groups of small metric spaces.}
\newblock {\em Pac. J. Math.} \textbf{219} (1), 27--51, 2005.

\bibitem[BBC07]{banicabichoncollins07}
T. Banica, J. Bichon, and B. Collins.
\newblock {The hyperoctahedral quantum group.}
\newblock {\em J. Ramanujan Math. Soc.} \textbf{22} (4), 345--384, 2007.

\bibitem[BBCC12]{banicabichoncollinscurran11}
T. Banica, J. Bichon, B. Collins, and S. Curran.
\newblock {A maximality result for orthogonal quantum groups.}
\newblock To appear in Comm. Algebra, 2012.

\bibitem[BC10]{banicacurran10-gram-matrix}
T. Banica and S. Curran.
\newblock {Decomposition results for {G}ram matrix determinants}.
\newblock {\em J. Math. Phys.} \textbf{51} (11), 14p, 2010.

\bibitem[BCS10]{banicacurranspeicher09_2}
T. Banica, S. Curran, and R. Speicher.
\newblock {Classification results for easy quantum groups.}
\newblock {\em Pac. J. Math.} \textbf{247} (1), 1--26, 2010.

\bibitem[BCS11]{banicacurrranspeicher09}
T. Banica, S. Curran, and R. Speicher.
\newblock {Stochastic aspects of easy quantum groups.}
\newblock {\em Probab. Theory Related Fields} \textbf{149}, 435--462, 2011.

\bibitem[BCS12]{banicacurranspeicher12-finetti}
T. Banica, S. Curran, and R. Speicher.
\newblock {De Finetti theorems for easy quantum groups.}
\newblock {\em Ann. Probab.} \textbf{40} (1), 401--435, 2012.

\bibitem[BS11a]{banicaskalski11}
T. Banica and A. Skalski.
\newblock {Quantum symmetry groups of C$^*$-algebras equipped with orthogonal
  filtrations}.
\newblock arXiv:1109.6184v1, 2011.

\bibitem[BS11b]{banicaskalski11-two-parameter}
T. Banica and A. Skalski.
\newblock {Two-parameter families of quantum symmetry groups.}
\newblock {\em J. Funct. Anal.} \textbf{260} (11), 3252--3282, 2011.

\bibitem[BS12]{banicaskalski12-group-duals}
T. Banica and A. Skalski.
\newblock {Quantum isometry groups of duals of free powers of cyclic groups.}
\newblock {\em Int. Math. Res. Not.} \textbf{2012} (9), 2094--2122, 2012.

\bibitem[BSS12]{banicaskalskisoltan12-homoeneous-spaces}
T. Banica, A. Skalski, and P. So{\l}tan.
\newblock {Noncommutative homogeneous spaces: the matrix case.}
\newblock {\em J. Geom. Phys.} \textbf{62} (6), 1451--1466, 2012.

\bibitem[BS09]{banicaspeicher09}
T. Banica and R. Speicher.
\newblock {Liberation of orthogonal Lie groups.}
\newblock {\em Adv. Math.} \textbf{222} (4), 1461--1501, 2009.

\bibitem[BV09]{banicavergnioux09}
T. Banica and R. Vergnioux.
\newblock {Fusion rules for quantum reflection groups.}
\newblock {\em J. Noncommut. Geom.} \textbf{3} (3), 327--359, 2009.

\bibitem[BV10]{banicavergnioux09_2}
T. Banica and R. Vergnioux.
\newblock {Invariants of the half-liberated orthogonal group.}
\newblock {\em Ann. Inst. Fourier} \textbf{60} (6), 2137--2164, 2010.

\bibitem[BG09a]{bhowmickgoswami09-riemannian}
J. Bhowmick and D. Goswami.
\newblock {Quantum group of orientation-preserving Riemannian isometries.}
\newblock {\em J. Funct. Anal.} \textbf{257} (8), 2530--2572, 2009.

\bibitem[BG09b]{bhowmickgoswami09-computations}
J. Bhowmick and D. Goswami.
\newblock {Quantum isometry groups: Examples and computations.}
\newblock {\em Commun. Math. Phys.} \textbf{285} (2), 421--444, 2009.

\bibitem[Bic03]{bichon03}
J. Bichon.
\newblock {Quantum automorphism groups of finite graphs.}
\newblock {\em Proc. Am. Math. Soc.} \textbf{131} (3), 665--673, 2003.

\bibitem[Bra12]{brannan11}
M. Brannan.
\newblock {Approximation properties for free orthogonal and free unitary
  quantum groups.}
\newblock {\em J. Reine Angew. Math.} \textbf{672}, 223--251, 2012.

\bibitem[CS11a]{curranspeicher11-infinitesimal-freeness}
S. Curran and R. Speicher.
\newblock {Asymptotic infinitesimal freeness with amalgamation for Haar quantum
  unitary random matrices.}
\newblock {\em Commun. Math. Phys.} \textbf{301} (3), 627--659, 2011.

\bibitem[CS11b]{curranspeicher11-quantum-invariant-families}
S. Curran and R. Speicher.
\newblock {Quantum invariant families of matrices in free probability.}
\newblock {\em J. Funct. Anal.} \textbf{261}, 897--933, 2011.

\bibitem[Dri87]{drinfeld86}
V.~G. Drinfel'd.
\newblock {Quantum groups.}
\newblock In {\em {Proceedings of the international congress of mathematicians
  (ICM), Berkley, USA, August 3--11, 1986}}, volume~II, pages 798 --820.
  Providence, R.I.: American Mathematical Society, 1987.

\bibitem[Fre12]{freslon12}
A. Freslon.
\newblock {Examples of weakly amenable discrete quantum groups.}
\newblock arXiv:1207.1470, 2012.

\bibitem[Gos09]{goswami09}
D. Goswami.
\newblock {Quantum group of isometries in classical and noncommutative
  geometry.}
\newblock {\em Commun. Math. Phys.} \textbf{285} (1), 141--160, 2009.

\bibitem[HKL90]{harringtonkechrislouveau90}
L.~A. Harrington, A.~S. Kechris, and A. Louveau.
\newblock {A Glimm-Effros dichotomy for Borel equivalence relations.}
\newblock {\em J. Am. Math. Soc.} \textbf{3} (4), 903--928, 1990.

\bibitem[Iso12]{isono12}
Y. Isono.
\newblock {Examples of factors which have no Cartan subalgebra.}
\newblock arXiv:1209.1728, 2012.

\bibitem[Jim85]{jimbo85}
M. Jimbo.
\newblock {A q-difference analogue of $U({\frak g})$ and the Yang-Baxter
  equation.}
\newblock {\em Lett. Math. Phys.} \textbf{10}, 63--69, 1985.

\bibitem[Kec99]{kechris99-new-directions}
A.~S. Kechris.
\newblock {New directions in descriptive set theory.}
\newblock {\em Bull. Symb. Log.} \textbf{5} (2), 161--174, 1999.

\bibitem[KS09]{koestlerspeicher09-de-finetti}
C. K{\"o}stler and R. Speicher.
\newblock {A noncommutative de Finetti theorem: invariance under quantum
  permutations is equivalent to freeness with amalgamation.}
\newblock {\em Commun. Math. Phys.} \textbf{291} (2), 473--490, 2009.

\bibitem[Kur37]{kurosch37}
A. Kurosch.
\newblock {Primitive torsionsfreie Abelsche Gruppen vom endlichen Range.}
\newblock {\em Ann. Math (2)} \textbf{38} (1), 173--203, 1937.

\bibitem[LDS12]{liszka-daleckisoltan12}
J. Liszka-Dalecki and P.~M. So{\l}tan.
\newblock {Quantum isometry groups of symmetric groups.}
\newblock {\em Int. J. Math.} \textbf{23} (7), 25 pages, 2012.

\bibitem[Mal38]{malcev38}
A.~I. Malcev.
\newblock {Torsion-free groups of finite rank.}
\newblock {\em Matematicheski\u{i} Sbornik} \textbf{4}, 45--68, 1938.

\bibitem[Neu37]{neumann37}
B.~H. Neumann.
\newblock {Identical relations in groups. I.}
\newblock {\em Math. Ann.} \textbf{114}, 506--525, 1937.

\bibitem[Neu67]{neumann67}
H. Neumann.
\newblock {\em {Varieties of groups}}, volume~37 of {\em Ergebnisse der
  Mathematik und ihrer Grenzgebiete}.
\newblock {Berlin-Heidelberg-New York: Springer-Verlag}, 1967.

\bibitem[Ols70]{olshanskii70}
A.~Y. Olshanskii.
\newblock {On the problem of a finite basis of identities in groups}.
\newblock {\em Math. USSR - Izv.} \textbf{4}, 1970.

\bibitem[Rau12]{raum10}
S. Raum.
\newblock {Isomorphisms and fusion rules of orthogonal free quantum groups and
  their free complexifications.}
\newblock {\em Proc. Amer. Math. Soc.} \textbf{140} (9), 3207--3218, 2012.

\bibitem[Ros90]{rosso90-version-duke}
M. Rosso.
\newblock {Alg{\`e}bres enveloppantes quantifi\'ees, groupes quantiques
  compacts de matrices et calcul diff\'erentiel non commutatif}.
\newblock {\em Duke Math. J.} \textbf{61} (1), 11--40, 1990.

\bibitem[TQ12]{taoqiu12-dihedral-group}
J. Tao and D. Qiu.
\newblock {Quantum isometry groups for dihedral group $D_{2(2n + 1)}$.}
\newblock {\em J. Geom. Phys.} \textbf{62} (9), 1977--1983, 2012.

\bibitem[Tim08]{timmermann08}
T. Timmermann.
\newblock {\em {An invitation to quantum groups and duality. From Hopf algebras
  to multiplicative unitaries and beyond.}}
\newblock EMS Textbooks in Mathematics. Z{\"u}rich: European Mathematical
  Society, 2008.

\bibitem[VV07]{vaesvergnioux05}
S. Vaes and R. Vergnioux.
\newblock {The boundary of universal discrete quantum groups, exactness, and
  factoriality.}
\newblock {\em Duke Math. J.} \textbf{140} (1), 35--84, 2007.

\bibitem[VL70]{vaughan-lee70}
M.~R. Vaughan-Lee.
\newblock {Uncountably many varieties of groups.}
\newblock {\em Bull. Lond. Math. Soc.} \textbf{2}, 280--286, 1970.

\bibitem[VV11]{vergniouxvoigt11-ktheory-free}
R. Vergnioux and C. Voigt.
\newblock {The K-theory of free quantum groups}.
\newblock arXiv:1112.3291, 2011.

\bibitem[Voi11]{voigt11-baum-connes}
C. Voigt.
\newblock {The Baum-Connes conjecture for free orthogonal quantum groups.}
\newblock {\em Adv. Math.} \textbf{227} (5), 1873--1913, 2011.

\bibitem[Wan95]{wang95}
S. Wang.
\newblock {Free products of compact quantum groups.}
\newblock {\em Commun. Math. Phys.} \textbf{167} (3), 671--692, 1995.

\bibitem[Wan98]{wang98}
S. Wang.
\newblock {Quantum symmetry groups of finite spaces.}
\newblock {\em Commun. Math. Phys.} \textbf{195} (1), 195--211, 1998.

\bibitem[Web12]{weber12}
M. Weber.
\newblock {On the classification of easy quantum groups - The
  nonhyperoctahedral and the half-liberated case.}
\newblock arXiv:1201.4723v1, 2012.

\bibitem[Wor87]{woronowicz87}
S.~L. Woronowicz.
\newblock {Compact matrix pseudogroups.}
\newblock {\em Commun. Math. Phys.} \textbf{111}, 613--665, 1987.

\bibitem[Wor88]{woronowicz88}
S.~L. Woronowicz.
\newblock {Tannaka-Krein duality for compact matrix pseudogroups. Twisted SU(N)
  groups.}
\newblock {\em Invent. Math.} \textbf{93} (1), 35--76, 1988.

\bibitem[Wor91]{woronowicz91}
S.~L. Woronowicz.
\newblock {A remark on compact matrix quantum groups.}
\newblock {\em Lett. Math. Phys.} \textbf{21} (1), 35--39, 1991.

\bibitem[Wor98]{woronowicz98}
S.~L. Woronowicz.
\newblock {Compact quantum groups.}
\newblock In A. Connes et~al., editors, {\em {Quantum symmetries/ Sym{\'e}tries
  quantiques. Proceedings of the Les Houches summer school, Session LXIV, Les
  Houches, France, August 1 -- Septembter 8, 1995}}, pages 845--884. Amsterdam:
  North-Holland, 1998.

\end{thebibliography}

{\small \parbox[t]{200pt}{Sven Raum\\ KU Leuven, Department of Mathematics\\
   Celestijnenlaan 200B\\ B--3001 Leuven \\ Belgium
   \\ {\footnotesize sven.raum@wis.kuleuven.be}}
\hspace{15pt}
\parbox[t]{200pt}{Moritz Weber\\ Saarland University, Fachbereich Mathematik\\
   Postfach 151159\\ 66041 Saarbr{\"u}cken \\ Germany
   \\ {\footnotesize weber@math.uni-sb.de}}

\end{document}